\newtheorem{thm}{Theorem}[section]
\newtheorem{cor}[thm]{Corollary}
\newtheorem{lemma}[thm]{Lemma}
\newtheorem{prop}[thm]{Proposition}
\newtheorem{rmk}[thm]{Remark}
\theoremstyle{definition}
\newtheorem{defi}[thm]{Definition}
\DeclareMathOperator{\Gr}{Gr}
\DeclareMathOperator{\cl}{cl}
\begin{document}

\title{Ternary and quaternary positroids}
\author{Jeremy Quail}
\address{Dept.\:of Mathematics \& Statistics \\ University of Vermont \\ Burlington, VT, USA}
\email{jeremy.quail@uvm.edu}
\date{July 10, 2024}

\maketitle

\begin{abstract}A positroid is an ordered matroid realizable by a real matrix with all nonnegative maximal minors. Postnikov gave a map from ordered matroids to Grassmann necklaces, for which there is a unique positroid in each fiber of the map. Here, we give forbidden minor characterizations of ternary and quaternary positroids. We show that a positroid is ternary if and only if it is near-regular, and that all ternary positroids are formed by direct sums and $2$-sums of binary positroids and positroid ordered whirls. We prove that a positroid is quaternary if and only if it is $U^2_6, U^4_6,$ and $P_6$-free. Under the map from ordered matroids to Grassmann necklaces, we fully characterize the fibers of ternary positroids, referred to as their positroid envelope classes; in particular, the envelope class of a positroid ordered whirl of rank-$r$ contains exactly four matroids.
\end{abstract}

\section{Introduction}

A positroid on $n$ elements of rank-$r$ is an ordered matroid that can be realized by an $r \times n$ full-rank real matrix with all nonnegative maximal minors. They were introduced by Postnikov in the study of the nonnegative Grassmannian, and they have connections to many interesting combinatorial objects~\cite{postnikov2006total}, as well as applications in physics~\cite{arkani2012scattering,kodama2014kp}. The positroids partition the ordered matroids into positroid envelope classes, and the intersection of these classes with the $\mathbb{R}$-realizable ordered matroids decomposes the real Grassmannians into open positroid varieties. The goal of this paper is to characterize the $\mathbb{F}_3$- and $\mathbb{F}_4$-realizable positroids, and to explore the poset structure of the positroid envelope classes.

Section~\ref{sec:prelim} contains basic definitions and necessary background. We consider the three classes of combinatorial objects of interest and the maps between them: ordered matroids, decorated permutations, and Grassmann necklaces. We define the positroid envelope classes and leverage results on rank-preserving weak maps of matroids to give direct sum and $2$-sum decompositions of envelope classes. We consider the natural poset structure on positroid envelope classes given by rank-preserving weak maps, and show that if a positroid decomposes under direct sums and $2$-sums into matroids of rank or corank at most two then its envelope class is a join-semilattice. Finally, several equivalent characterizations of the binary positroids are given.

In Section~\ref{sec:main}, we give forbidden minor characterizations of the ternary and quaternary positroids, and we fully characterize the ternary positroid envelope classes. In particular, we show that non-binary $3$-connected ternary positroids must be isomorphic to whirls, and their envelope classes contain exactly four matroids. In general, ternary positroid envelope classes are lattices and ternary positroids can be decomposed via direct sums and $2$-sums into circuits, cocircuits and whirls.

\section{Preliminaries}\label{sec:prelim}

\subsection{Basic definitions}
A \emph{matroid} $M = (E,\mathcal{B})$ consists of a finite ground-set $E$ and a non-empty collection of subsets $\mathcal{B} \subseteq 2^E$, called bases, that satisfy the following \emph{basis exchange property}: 
\begin{itemize}
    \item if $B_1,B_2 \in \mathcal{B}$ and $x \in B_1 \setminus B_2$, then there exists $y \in B_2 \setminus B_1$ such that $(B_1 \setminus \{x\}) \cup \{y\} \in \mathcal{B}$.
\end{itemize}
The collection of \emph{independent sets} of $M$ is $\mathcal{I}(M) := \{ I \subseteq B: B \in \mathcal{B}(M) \}$, and the \emph{dependent sets} of $M$ are all subsets of $E(M)$ that are not independent. Minimal dependent sets of $M$ are called \emph{circuits}, and an element that constitutes a circuit whose cardinality is one is called a \emph{loop}. The \emph{dual} of a matroid $M$ is the matroid $M^*$ whose ground-set is $E(M^*) := E(M)$ and whose bases are $\mathcal{B}(M^*) := \{ E(M) \setminus B : B \in \mathcal{B}(M) \}$. A \emph{cocircuit} of $M$ is a circuit in $M^*$, and a \emph{coloop} of $M$ is a loop in $M^*$. The \emph{rank function} of a matroid $M$ is a map $r_M: 2^{E(M)} \to \mathbb{Z}_{\geq 0}$ given by $r_M(X) := \max \{ \lvert I \rvert : I \subseteq X, I \in \mathcal{I}(M) \}$. We call $X \subseteq E(M)$ a \emph{flat} of $M$ if for all $y \in E(M) \setminus X, r_M(X \cup \{y\}) > r_M(X)$. A flat $X$ of $M$ is called a \emph{hyperplane} if $r_M(X) = r_M(M) - 1$. We call $X \subset E(M)$ a \emph{circuit-hyperplane} if $X$ is both a circuit and a hyperplane of $M$. The \emph{closure operator} of a matroid $M$ is a function $\cl : 2^{E(M)} \to 2^{E(M)}$ given by, for all $X \subseteq E(M)$, $\cl(X) := \{e \in E(M) : r_M(X \cup \{e\}) = r_M(X) \}$. A \emph{matroid isomorphism} between two matroids $M$ and $N$ is a basis preserving bijection $E(M) \leftrightarrow E(N)$.

From a matroid $M$ we can obtain a new matroid $M \setminus e$ by \emph{deleting} the element $e \in E(M)$. The ground-set of $M \setminus e$ is given by $E(M \setminus e) := E(M) \setminus \{e\}$. The independent sets of $M \setminus e$ are given by $\mathcal{I}(M \setminus e) := \{I : e \notin I \in \mathcal{I}(M)\}$. We can obtain a new matroid $M/e$ by \emph{contracting} the element $e \in E(M)$. The ground-set of $M/e$ is given by $E(M/e) := E(M) \setminus \{e\}$. The bases of $M/e$ are given by
\[
    \mathcal{B}(M/e) :=
    \begin{cases}
        \mathcal{B}(M), & \text{if $e$ is a loop}\\
        \{B \setminus \{e\} : e \in B \in \mathcal{B}(M)\}, & \text{otherwise.}
    \end{cases}
\]
A matroid $N$ obtained from $M$ by a sequence of deletions and contractions is called a \emph{matroid minor of $M$}. We say that the matroid $M$ is \emph{$N$-free} if $M$ does not contain a minor isomorphic to the matroid $N$.

Let $A$ be a matrix with entries over a field $\mathbb{F}$. We can obtain a matroid from $A$, denoted by $M(A)$, by taking the ground-set to be the column vectors of $A$ and the bases to be the collection of maximal linearly independent subsets of the column vectors. We say that $A$ is a matrix that \emph{realizes} the matroid $M(A)$. If there is an isomorphism between a matroid $M$ and $M(A)$, then we also say that $A$ realizes $M$. A matroid is called \emph{$\mathbb{F}$-linear}, or \emph{$\mathbb{F}$-realizable}, if it can be realized by a matrix with entries over $\mathbb{F}$. If a matroid $M$ is $\mathbb{F}$-linear, then so to is $M^*$. A matroid is called \emph{binary}, \emph{ternary}, or \emph{quaternary} if it is $\mathbb{F}_2$-, $\mathbb{F}_3$-, or $\mathbb{F}_4$-linear, respectively. A matroid $M$ is \emph{regular} if it is $\mathbb{F}$-linear for all fields $\mathbb{F}$. For any field $\mathbb{F}$, the class of $\mathbb{F}$-linear matroids is minor-closed, thus can be characterized by a (possibly infinite) set of forbidden minors. For example, a matroid is binary if and only if it has no minor isomorphic to $U^2_4$~\cite[Theorem 6.5.4]{oxley2006matroid}. The binary positroids are characterized in Theorem~\ref{thm:pos-graphic}.

Let $G = (V,E)$ be a graph consisting of a vertex-set $V$ and an edge-set $E$. We allow for graphs to have loops and multiple edges connecting two vertices. From $G$ we can obtain a matroid, denoted by $M(G)$, called the \emph{cycle matroid of $G$} by taking the ground-set to be the edge-set of $G$ and the bases to be the maximal forests in $G$. We say that $G$ is a graph that \emph{represents} any matroid isomorphic to $M(G)$. We say that a matroid is \emph{graphic} if it can be represented by some graph. The graphic positroids are characterized in Theorem~\ref{thm:pos-graphic}.

\subsection{Positroids}
An \emph{ordered matroid} $M$ is a matroid whose ground-set $E(M)$ has a total ordering. Recall that the set of maximal minors of a matrix $A$ is the collection of determinants of all maximal square submatrices of $A$. If $A$ is a square matrix, then the unique maximal minor is the determinant of $A$.

\begin{defi}
    A \emph{positroid} is an $\mathbb{R}$-linear ordered matroid that can be realized by a real matrix $A$ with all nonnegative maximal minors.
\end{defi}

We say that a matroid $M$ has a \emph{positroid ordering} if there exists a total ordering on $E(M)$ such that the induced ordered matroid obtained from $M$ is a positroid. Positroids are closed under duality and taking minors.

\begin{prop}[Proposition 3.5 in~\cite{ardila2016positroids}] \label{prop:pos-dual-minor}
    Let $M$ be a positroid on $\{i_1 < \ldots < i_n \}$. Then $M^*$ is a positroid on $\{i_1 < \ldots < i_n\}$. Furthermore, for any subset $S$ of $\{i_1 < \ldots < i_n\}$, the deletion $M \setminus S$ and the contraction $M/S$ are both positroids on $E(M) \setminus S$. Here the total order on $E(M) \setminus S$ is the one inherited from $\{i_1 < \ldots < i_n\}$.
\end{prop}

Let $X$ and $Y$ be subsets of a totally ordered set $Z = \{i_1 < i_2 < \cdots < i_n\}$. For a fixed $i_k \in Z$, we define a new total order $\leq_{i_k}$ on $Z$ given by 
\begin{equation*}
    i_k < i_{k+1} < \cdots i_n < i_1 < i_2 < \cdots i_{k-1}.
\end{equation*}
We say that $X$ and $Y$ are \emph{crossing subsets of $Z$} if there exist $w,y \in X$ and $x,z \in Y$ such that $w <_w x <_w y <_w z$. If $X$ and $Y$ are not crossing subsets of $Z$, then we call them \emph{non-crossing subsets of $Z$}. The following result characterizes positroids as those ordered matroids for which every disjoint circuit-cocircuit pair is non-crossing.

\begin{thm}[Theorem 5.1,5.2 in~\cite{ardila2017positively}; Chapter 4, Theorem 1.1 in~\cite{perez1987quelques}] \label{thm:pos-crossing}
    An ordered matroid $M$ on the totally ordered set $E(M)$ is a positroid if and only if for any circuit $C$ and any cocircuit $C^*$ satisfying $C \cap C^* = \emptyset$, the sets $C$ and $C^*$ are non-crossing subsets of $E(M)$.
\end{thm}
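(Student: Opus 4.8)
The plan is to prove the two implications separately, treating the forward direction (positroid $\Rightarrow$ non-crossing) as the more elementary one and the converse as the substantive one.

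\textbf{Forward direction.} Suppose $M = M(A)$ for a real matrix $A$ with all maximal minors nonnegative, and suppose for contradiction that $M$ has a disjoint circuit $C$ and cocircuit $C^*$ that cross, witnessed by $w,y \in C$ and $x,z \in C^*$ with $w <_w x <_w y <_w z$. First I would reduce to a four-element minor: contract the elements of $C \setminus \{w,y\}$, delete the elements of $C^* \setminus \{x,z\}$, and delete all remaining elements of $E(M) \setminus (C \cup C^*)$. Using that circuits persist under deletion of other elements and cocircuits persist under contraction of other elements, the resulting minor $N$ on $\{w,x,y,z\}$ (inheriting the cyclic order, so $\{w,y\}$ and $\{x,z\}$ still cross) has $\{w,y\}$ as a circuit and $\{x,z\}$ as a cocircuit; the care needed is where these operations threaten to create loops or coloops, which must be ruled out. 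A corank computation then forces $r(N)=2$, so $\{w,y\}$ is a parallel pair and $N$ is one of the few rank-$2$ matroids on four elements with $\{w,y\}$ a circuit-line and $\{x,z\}$ the complementary cocircuit (essentially $U_{1,2}\oplus U_{1,2}$ or $U_{2,4}$ with $w,y$ made parallel). By Proposition~\ref{pos-dual-minor}, $N$ is a positroid; but a direct check of these finitely many matroids, ordered $w<x<y<z$, shows that none admits a realization with all maximal minors nonnegative — contradiction.

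\textbf{Converse direction.} Suppose every disjoint circuit-cocircuit pair of $M$ is non-crossing; I would show $M$ is a positroid. The idea is first to upgrade this unoriented condition to an orientation: assign to each circuit of $M$ a sign vector whose positive and negative parts are cyclic intervals of $E(M)$, and verify — using the circuit elimination axioms together with the non-crossing hypothesis applied to the circuit-cocircuit pairs that arise — that these signed circuits obey the oriented matroid axioms and define a \emph{positively oriented} matroid with underlying matroid $M$. Then invoke the theorem that every positively oriented matroid is realizable by a matrix with nonnegative maximal minors, i.e.\ is a positroid — precisely the content of the cited results of da Silva and of Ardila--Rudenko--Williams.

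\textbf{Main obstacle.} In the forward direction the obstacle is bookkeeping: guaranteeing the minor reduction genuinely yields the circuit $\{w,y\}$ and cocircuit $\{x,z\}$ rather than a degenerate configuration, and checking the handful of small positroids. The converse is where the real difficulty lies: extracting a consistent positive orientation from the combinatorial non-crossing data, and — far more seriously — the realizability of positively oriented matroids, a deep theorem that is the true crux of the characterization.
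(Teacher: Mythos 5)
This theorem is stated in the paper purely as an imported result, with citations to \cite{ardila2017positively} and \cite{perez1987quelques}; no proof is given in the text, so there is no internal argument to compare yours against. Measured against the cited literature, your outline reproduces its architecture faithfully: the equivalence is the conjunction of (a) an elementary forward direction, (b) da Silva's combinatorial theorem that the non-crossing circuit--cocircuit condition is equivalent to $M$ underlying a positively oriented matroid, and (c) the Ardila--Rinc\'on--Williams theorem that positively oriented matroids are realizable. Your converse is therefore not so much a proof as a correct identification of which two external theorems carry the weight, and you are right that (c) is the crux; since the paper itself treats the whole statement as a black box, that is a fair stopping point.

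The forward direction is where you actually do some work, and it goes through, but the step you flag deserves to be made explicit because it is the only place the argument could break. Passing from the intermediate minor (where $\{w,y\}$ is a circuit and $\{x,z\}$ a cocircuit) down to the four-element minor requires removing each remaining element $f$ one at a time, and deletion of $f$ destroys the cocircuit $\{x,z\}$ exactly when the current minor has a cocircuit of the form $\{f,x\}$ or $\{f,z\}$, while contraction of $f$ destroys the circuit $\{w,y\}$ exactly when there is a circuit $\{f,w\}$ or $\{f,y\}$. These two failures cannot occur simultaneously, since a circuit and a cocircuit of a matroid cannot meet in exactly one element (here, in $\{f\}$ alone); so for each $f$ at least one of deletion or contraction is safe, and iterating yields the desired minor $N$ on $\{w,x,y,z\}$. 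From there your analysis is correct: $\{w,y\}$ being simultaneously a two-element circuit and the complementary hyperplane forces $r(N)=2$ with $\{w,y\}$ a rank-one flat, and the sign computation ($y=\lambda w$ with $\det[w,x]>0$, $\det[x,y]\geq 0$ forcing $\lambda<0$, while $\det[w,z]>0$, $\det[y,z]\geq 0$ forcing $\lambda>0$) rules out a nonnegative realization, with Proposition~\ref{pos-dual-minor} guaranteeing $N$ is a positroid in the inherited order. An alternative forward argument that avoids the four-element reduction entirely is to produce a minor on $C\cup C^*$ in which $(C,C^*)$ is a $1$-separation and then invoke Theorem~\ref{thm:pos-direct-sum}, but your route is sound.
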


It follows that positroids are closed under dihedral action.

\begin{prop} \cite{quail2024positroid} \label{prop:pos-dihedral}
    Let $P = (E,\mathcal{B})$ be a positroid where $E = \{1 < 2 < \dots < n\}$, and let $D_n$ be the dihedral group of order $2n$. Then, for any $\omega \in D_n$, $(E, \{\omega[B] : B \in \mathcal{B}\})$ is a positroid.
\end{prop}

\subsection{Direct sums, 2-sums and tree decompositions}
In this section we define direct sums and 2-sums of matroids, through which we can decompose any matroid into its 3-connected components. These play an important role in Section~\ref{sec:main}, where we characterize the ternary positroids. Ternary positroids may have non-trivial 3-connected components, while binary (regular) positroids do not. 

Let $M$ be a matroid and $X \subseteq E(M)$. The \emph{connectivity function of} $M$, $\lambda_M$, is defined as
\begin{equation*}
    \lambda_M(X) := r_M(X) + r_M(E\setminus X) - r_M(M).
\end{equation*}
A $k$\emph{-separation of} $M$ is a pair $(X,E\setminus X)$ for which $\min\{ \lvert X \rvert, \lvert E \setminus X \rvert \} \geq k$ and $\lambda_M(X) < k$. For $n \geq 2$, a matroid $M$ is $n$\emph{-connected} if, for all $k \in [n-1]$, $M$ has no $k$-separations. We call a matroid $M$ \emph{disconnected} if it is not $2$-connected.

Let $M$ and $N$ be matroids on disjoint sets. The \emph{direct sum} of $M$ and $N$, $M \oplus N$, is the matroid whose ground-set is $E(M \oplus N) := E(M) \cup E(N)$, and whose bases are $\mathcal{B}(M \oplus N) := \{ B \cup B' : B \in \mathcal{B}(M), B' \in \mathcal{B}(N) \}$.

A matroid is disconnected if and only if it is the direct sum of two non-empty matroids. Every matroid $M$ decomposes under direct sums into a unique collection of non-empty $2$-connected matroids which we call the \emph{$2$-connected components} of $M$. Every positroid $P$ has a unique direct sum decomposition into $2$-connected positroids that give a non-crossing partition on the ground-set $E(P)$.

\begin{thm}[Theorem 7.6 in \cite{ardila2016positroids}] \label{thm:pos-direct-sum}
    Let $P$ be a positroid on $X$ and let $S_1, S_2, \ldots, S_t$ be the ground-sets of the $2$-connected components of $P$. Then $\{S_1, \ldots, S_t\}$ is a non-crossing partition of $X$. Conversely, if $S_1, S_2, \ldots, S_t$ form a non-crossing partition of $X$ and $P_1, P_2, \ldots, P_t$ are connected positroids on $S_1, S_2, \ldots, S_t$, respectively, then $P_1 \oplus \cdots \oplus P_t$ is a positroid.
\end{thm}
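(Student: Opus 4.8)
The plan is to deduce both directions from the circuit--cocircuit characterization of positroids in Theorem~\ref{thm:pos-crossing}, together with two standard matroid facts: (i) the circuits (resp.\ cocircuits) of a direct sum $M_1 \oplus M_2$ are exactly the circuits (resp.\ cocircuits) of $M_1$ together with those of $M_2$; and (ii) in a connected matroid on at least two elements, every pair of elements lies in a common circuit and every pair lies in a common cocircuit (see~\cite{oxley2006matroid}). I will also use the elementary observation that the cyclic order on a subset $S \subseteq X$ inherited from $X$ refines the cyclic order on $X$, so that for $A, B \subseteq S$, the sets $A$ and $B$ are crossing subsets of $S$ if and only if they are crossing subsets of $X$.

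For the forward direction I would argue by contradiction: suppose $\{S_1,\dots,S_t\}$ is not non-crossing, so there are distinct parts $S_i$, $S_j$ and elements $w,y \in S_i$, $x,z \in S_j$ with $w <_w x <_w y <_w z$. Since $S_i$ is the ground-set of a connected component of $P$ with $\lvert S_i \rvert \ge 2$, fact (ii) gives a circuit $C$ of $P|S_i$ containing $\{w,y\}$; by fact (i), $C$ is a circuit of $P$. Dually, $P|S_j$ is connected with $\lvert S_j \rvert \ge 2$, so there is a cocircuit $C^*$ of $P|S_j$ containing $\{x,z\}$, which by fact (i) is a cocircuit of $P$. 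Because $S_i \cap S_j = \emptyset$ we have $C \cap C^* = \emptyset$, and the chain $w <_w x <_w y <_w z$ with $w,y \in C$ and $x,z \in C^*$ witnesses that $C$ and $C^*$ are crossing, contradicting Theorem~\ref{thm:pos-crossing}. Hence the partition is non-crossing.

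For the converse, set $P := P_1 \oplus \cdots \oplus P_t$, and let $C$ be a circuit and $C^*$ a cocircuit of $P$ with $C \cap C^* = \emptyset$; by Theorem~\ref{thm:pos-crossing} it suffices to show $C$ and $C^*$ are non-crossing in $X$. By fact (i), $C \subseteq S_i$ for some $i$ and $C^* \subseteq S_j$ for some $j$, with $C$ a circuit of $P_i$ and $C^*$ a cocircuit of $P_j$. If $i = j$, then $C$ and $C^*$ form a disjoint circuit--cocircuit pair of the positroid $P_i$, hence are non-crossing as subsets of $S_i$ by Theorem~\ref{thm:pos-crossing}, hence non-crossing as subsets of $X$ by the order-restriction observation. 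If $i \ne j$, then a crossing of $C$ and $C^*$ would exhibit $w,y \in C \subseteq S_i$ and $x,z \in C^* \subseteq S_j$ with $w <_w x <_w y <_w z$, i.e.\ a crossing of the distinct parts $S_i$ and $S_j$, contradicting non-crossingness of the partition. In both cases $C$ and $C^*$ are non-crossing, so Theorem~\ref{thm:pos-crossing} shows $P$ is a positroid.

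Facts (i) and (ii) and the order-restriction observation are routine; the only points requiring care are keeping the two levels of cyclic order (on $X$ versus on a part $S_i$) straight, and dispatching trivial components --- a single loop or coloop --- which can never take part in a crossing, since a crossing requires two distinct elements from each of the two crossing sets. I do not expect any of these to be a genuine obstacle, so the main content of the argument is really the translation of "component" into "internal circuit/cocircuit" via fact (ii) in the forward direction.
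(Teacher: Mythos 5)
The paper does not prove this statement at all --- it is imported verbatim as Theorem~7.6 of \cite{ardila2016positroids} --- so there is no in-paper argument to compare against. Judged on its own, your proof is correct and complete. Both directions do follow from Theorem~\ref{thm:pos-crossing} together with the two standard facts you invoke (circuits and cocircuits of a direct sum are those of the summands; any two elements of a connected matroid on at least two elements lie in a common circuit, and dually in a common cocircuit), and you correctly handle the two points that need care: the strict inequalities in the crossing witness force $\lvert S_i\rvert,\lvert S_j\rvert\ge 2$ before you apply the common-circuit fact, and crossing of two subsets of a part $S_i$ is the same whether measured in $S_i$ or in $X$. This is a genuinely different route from the original: Ardila--Rinc\'on--Williams prove the result in~\cite{ardila2016positroids} with Grassmann-necklace/decorated-permutation machinery (connected positroids correspond to stabilized-interval-free permutations) rather than via the circuit--cocircuit criterion, which appears in their later paper~\cite{ardila2017positively}. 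Your version is shorter and more self-contained given Theorem~\ref{thm:pos-crossing}, but be aware that the implication you use for the converse (non-crossing disjoint circuit--cocircuit pairs imply positroid) is the deep direction of that theorem, resting on the realizability of positively oriented matroids; so the apparent economy of the argument is purchased by leaning on a hard cited result.
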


Let $M$ and $N$ be matroids, such that 
    \begin{itemize}
        \item[(i)] $\lvert E(M) \rvert, \lvert E(N) \rvert \geq 2$,
        \item[(ii)] $E(M) \cap E(N) = \{ e \}$, and
        \item[(iii)] neither $(e, E(M) \setminus \{ e\})$ nor $(e, E(N) \setminus \{e\})$ is a $1$-separator of $M$ or $N$ respectively.
    \end{itemize}
Then the \emph{$2$-sum} of $M$ and $N$, denoted by $M \oplus_2 N$, is the matroid on $E(M \oplus_2 N) = (E(M) \cup E(N)) \setminus \{e\}$ whose circuits are 
    \begin{align*}
        \mathcal{C}(M \setminus e)& \cup \mathcal{C}(N \setminus e) \\
        &\cup \{(C_1 \cup C_2) \setminus \{e\} : C_1 \in \mathcal{C}(M), C_2 \in \mathcal{C}(N), e \in C_1 \cap C_2\}.
    \end{align*}
We call a $2$-sum, $M = M_1 \oplus_2 M_2$, \emph{non-trivial} if $|E(M_1)| > 2$ and $|E(M_2)| > 2$.

A $2$-connected matroid is $3$-connected if and only if it cannot be obtained by a non-trivial $2$-sum. We can use direct sums and $2$-sums to decompose matroids into collections of non-empty $3$-connected matroids which we call the \emph{$3$-connected components}. A $2$-sum decomposition of a $2$-connected matroid can be represented by a vertex- and edge-labelled tree.

\begin{defi}[\cite{oxley2006matroid}]
    A \emph{matroid-labelled tree} is a tree $T$ with vertex set $\{M_1, M_2, \ldots, M_k\}$ for some positive integer $k$ such that
    \begin{itemize}
        \item[(i)] each $M_i$ is a matroid;
        \item[(ii)] if $M_i$ and $M_j$ are joined by an edge $e$ of $T$, then $E(M_i) \cap E(M_j) = \{e\}$, where $e$ is not a $1$-seperator of $M_i$ or $M_j$; and
        \item[(iii)] if $M_i$ and $M_j$ are non-adjacent, then $E(M_i) \cap E(M_j) = \emptyset$.
    \end{itemize}
    A \emph{tree decomposition} of a $2$-connected matroid $M$ is a matroid-labelled tree $T$ such that if $V(T) = \{M_1, M_2, \ldots, M_k\}$ and $E(T) = \{e_1, e_2, \ldots, e_{k-1}\}$, then
    \begin{itemize}
        \item[(i)] $E(M) = (E(M_1) \cup E(M_2) \cup \cdots \cup E(M_k)) \setminus \{e_1,e_2, \ldots, e_{k-1}\}$;
        \item[(ii)] $\lvert E(M_i) \rvert \geq 3$ for all $i$ unless $\lvert E(M) \rvert < 3$, in which case $k=1$ and $M_1 = M$; and
        \item[(iii)] $M$ is the matroid that labels the single vertex of $T/\{e_1,\ldots,e_{k-1}\}$.
    \end{itemize}
\end{defi}

\begin{thm}[Theorem 8.3.10 in~\cite{oxley2006matroid}] \label{thm:canon-tree}
    Let $M$ be a $2$-connected matroid. Then $M$ has a tree decomposition $T$ in which every vertex label is $3$-connected, a circuit, or a cocircuit, and there are no two adjacent vertices that are both labelled by circuits or both labelled by cocircuits. Moreover, $T$ is unique to within relabelling of its edges.
\end{thm}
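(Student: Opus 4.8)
The plan is to prove existence by induction on $|E(M)|$, using non-trivial $2$-sums as the recursive step, and then prove uniqueness by analysing the $2$-separations of $M$ via submodularity of the connectivity function; the uniqueness half is where I expect the real difficulty to lie.

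For existence: if $|E(M)| < 3$, or $M$ is $3$-connected, a circuit, or a cocircuit, then the one-vertex tree labelled $M$ already has all the required properties. Otherwise $M$ is $2$-connected but not $3$-connected, so by the criterion recalled above it is a non-trivial $2$-sum $M = M_1 \oplus_2 M_2$, with $M_1$ and $M_2$ $2$-connected and each having strictly fewer elements than $M$. Applying the inductive hypothesis gives tree decompositions $T_1$ and $T_2$ of $M_1$ and $M_2$ whose vertex labels are $3$-connected matroids, circuits, or cocircuits; the connecting element $e$ of the $2$-sum lies in a unique vertex label of each $T_i$, and joining $T_1$ to $T_2$ by a new edge $e$ between those two vertices yields, by associativity of the $2$-sum, a tree decomposition $T'$ of $M$. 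It may violate the last requirement, so while some edge of $T'$ joins two circuit-labelled vertices (respectively two cocircuit-labelled vertices) I contract that edge and relabel the merged vertex by the $2$-sum of the two labels, which is again a circuit (the $2$-sum of $C_p$ and $C_q$ along a common element is $C_{p+q-2}$), respectively again a cocircuit; each merge drops the vertex count and leaves every vertex of size at least $3$, so this terminates in a tree $T$ of the desired form.

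For uniqueness, the main tool is submodularity of $\lambda_M$, giving the uncrossing lemma: if $(X, E\setminus X)$ and $(Y, E\setminus Y)$ are $2$-separations of the $2$-connected matroid $M$ whose four ``corners'' are all nonempty, then each corner of size at least $2$, paired with its complement, is again a $2$-separation. Given a tree decomposition $T$ as in the statement, I would call a $2$-separation of $M$ an \emph{edge separation} of $T$ if it arises by deleting an edge of $T$ and taking the unions of vertex ground-sets on the two sides (discarding the virtual elements), and a \emph{vertex separation} if it arises by partitioning the ground-set of a circuit- or cocircuit-labelled vertex into two parts of size at least $2$ and spreading that partition across $T$. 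One checks directly that edge and vertex separations really are non-trivial $2$-separations of $M$. The content of uniqueness is then two claims: (i) every non-trivial $2$-separation of $M$ is an edge separation or a vertex separation of $T$; and (ii) a tree satisfying the hypotheses of the theorem is determined, up to relabelling its edges, by the family of $2$-separations it displays.

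Claim (i) is the crux, and it is the step I expect to be the main obstacle. Given a non-trivial $2$-separation $(U, E\setminus U)$ that is not displayed by $T$, there must be a vertex $M_i$ of $T$ whose label is $3$-connected and which $(U, E\setminus U)$ ``splits''; restricting $(U, E\setminus U)$ to $M_i$ — for each virtual edge of $M_i$, bundling the real elements of $M$ hanging off it together with that virtual edge — should produce a $2$-separation of $M_i$, contradicting $3$-connectedness. Making ``restrict'' precise and checking that $\lambda$ does not increase under it is exactly the uncrossing computation, carried out repeatedly along the path of $T$ joining the $U$-side to the $(E\setminus U)$-side. With (i) proved, any two trees $T, T'$ satisfying the hypotheses display the same non-trivial $2$-separations of $M$; claim (ii), a bookkeeping argument that pins down the $3$-connected vertices as the rigid pieces and the circuit and cocircuit vertices as the maximal flexible pieces — maximality forced exactly by the prohibition on adjacent circuits and on adjacent cocircuits — then gives $T = T'$ up to edge relabelling, completing the proof.
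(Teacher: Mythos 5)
This statement is not proved in the paper at all: it is the Cunningham--Edmonds canonical tree decomposition theorem, imported verbatim as Theorem 8.3.10 of Oxley and used as a black box, so the only fair comparison is with the known proof there. Your existence argument matches that proof in outline and is essentially sound: induction on $\lvert E(M)\rvert$ via a non-trivial $2$-sum $M = M_1 \oplus_2 M_2$ (both parts $2$-connected with strictly fewer elements), glue the two inductively obtained trees along the basepoint $e$, and repeatedly contract edges joining two circuit-labelled (or two cocircuit-labelled) vertices, using $C_p \oplus_2 C_q \cong C_{p+q-2}$ and its dual to keep the labels in the allowed classes. The details you elide (that $e$ lies in exactly one vertex label of each $T_i$ and is not a $1$-separator there, and that merging preserves condition (iii) of the definition by associativity of $\oplus_2$) are routine.

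The genuine gap is the uniqueness half, which you have outlined but not proved, and which is where almost all of the work in Oxley's Section 8.3 actually sits. Your claim (i) --- that every non-trivial $2$-separation of $M$ is either an edge separation or a vertex separation of $T$ --- is stated in a form that already presupposes the hard part: a priori a $2$-separation $(U, E\setminus U)$ need not distribute the elements hanging off each virtual element of a vertex label coherently to one side, so ``restricting'' it to a vertex $M_i$ is not yet a well-defined operation, and showing that the restriction is a $2$-separation of $M_i$ (so that $3$-connected vertices cannot be split, and circuit/cocircuit vertices can only be split into cyclically coherent blocks) is precisely the repeated uncrossing argument you defer. Claim (ii) is also not automatic: you must argue that the family of displayed $2$-separations reconstructs the tree, which requires identifying the circuit and cocircuit vertices with maximal ``laminar-violating'' families of $2$-separations and using the no-two-adjacent-circuits condition to rule out alternative groupings. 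As written, both claims are announced rather than established, so the proposal is a correct plan for the standard proof but not a proof; since the paper itself only cites the result, the right course here is to keep the citation to Oxley rather than to substitute this sketch.
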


For a given $2$-connected matroid $M$, we call a tree decomposition given by Theorem~\ref{thm:canon-tree} a \emph{canonical tree decomposition} for $M$. The subsets of elements of a positroid across a direct sum must be non-crossing. This is not the case for $2$-sums. However, across a $2$-sum of a positroid the subsets of elements \emph{may} be non-crossing. More precisely, a $2$-connected positroid that is not $3$-connected can be written as a $2$-sum of positroids on non-crossing sets of elements. Extending this to canonical tree decompositions, we have the following theorem.

\begin{thm}[Theorem 2.27 in~\cite{quail2024positroid}] \label{thm:pos-canon-tree-iff}
    Let $M$ be a $2$-connected matroid on a subset of a densely totally ordered set $X$. Then $M$ is a positroid if and only if there exists a canonical tree decomposition $T$ with the following properties. Every vertex of $T$ is a positroid, and cutting $T$ along any edge $e$ results in two 2-connected positroids on non-crossing ground-sets.
\end{thm}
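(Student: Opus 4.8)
The plan is to isolate one lemma about $2$-sums and then prove both implications, using the circuit--cocircuit characterization of positroids (Theorem~\ref{thm:pos-crossing}) throughout. The \textbf{Key Lemma} I would establish first is: if $N_1,N_2$ are positroids with $E(N_1)\cap E(N_2)=\{e\}$, these ground-sets are non-crossing subsets of a common totally ordered set, and $N_1\oplus_2 N_2$ is defined, then $N_1\oplus_2 N_2$ is a positroid on $(E(N_1)\cup E(N_2))\setminus\{e\}$ with the inherited order. To prove it, let $C$ be a circuit and $C^*$ a cocircuit of $M:=N_1\oplus_2 N_2$ with $C\cap C^*=\emptyset$, and put $A=E(N_1)\setminus\{e\}$, $B=E(N_2)\setminus\{e\}$; non-crossing of $E(N_1)$ and $E(N_2)$ forces $A$ and $B$ to be complementary cyclic intervals of $A\cup B$ with $e$ lying in a boundary gap between them. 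From the description of circuits of a $2$-sum (and, for cocircuits, the identity $(N_1\oplus_2 N_2)^*=N_1^*\oplus_2 N_2^*$), each of $C,C^*$ is contained in $A$, contained in $B$, or is obtained by gluing a circuit (or cocircuit) of $N_1$ through $e$ with one of $N_2$ through $e$. The case in which $C$ and $C^*$ both meet both $A$ and $B$ cannot occur: then $(C\cap A)\cup\{e\}$ and $(C^*\cap A)\cup\{e\}$ would be a circuit and a cocircuit of $N_1$ meeting in exactly the element $e$. In every remaining case, a hypothetical crossing quadruple for $(C,C^*)$ in $E(M)$ transports to a crossing quadruple for a \emph{disjoint} circuit--cocircuit pair of $N_1$ or $N_2$: replace any element lying on the far side by $e$, whose cyclic position relative to the near side agrees with that of anything on the far side because $e$ is at the boundary. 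This contradicts $N_1$ or $N_2$ being a positroid, proving the lemma. Sufficiency in the theorem is then immediate: if $M$ admits a canonical tree decomposition $T$ with the stated properties, cut it along any single edge to write $M=N_1\oplus_2 N_2$ with $N_1,N_2$ positroids on non-crossing ground-sets, and apply the Key Lemma.

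For necessity, let $M$ be a $2$-connected positroid and induct on $|E(M)|$. If $M$ is $3$-connected, a circuit, or a cocircuit, then its canonical tree decomposition is the single vertex $M$, a positroid, and the edge condition is vacuous. Otherwise $M$ has a $2$-separation, and the crux is the claim that \emph{a $2$-connected positroid that is not $3$-connected has an exact $2$-separation $(A,B)$ with $A$ and $B$ both cyclic intervals of $E(M)$}. Granting this, choose $e\in X$ in the order-gap between the intervals $A$ and $B$ (possible since $X$ is dense), and write $M=N_A\oplus_2 N_B$ with $E(N_A)=A\cup\{e\}$ and $E(N_B)=B\cup\{e\}$, so these ground-sets are non-crossing by construction. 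Both $N_A$ and $N_B$ are $2$-connected (otherwise $M$ would split off a direct summand), and both are positroids: a disjoint circuit--cocircuit pair of $N_A$ has $e$ in at most one of its members, and using $N_A\setminus e=M\setminus B$ together with its dual it transports to a disjoint, hence non-crossing, circuit--cocircuit pair of $M$, exactly as in the Key Lemma; similarly for $N_B$. Since $|A|,|B|\ge 2$, both $N_A$ and $N_B$ have strictly fewer elements than $M$, so by induction each has a canonical tree decomposition $T_A$, $T_B$ with the required properties.

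Let $T$ be the matroid-labelled tree obtained by joining the unique vertex of $T_A$ whose ground-set contains $e$ to the unique such vertex of $T_B$ by a new edge labelled $e$. Then $T$ is a tree decomposition of $M$ and every vertex of $T$ is a positroid. Cutting $T$ along an edge $f$: if $f=e$ the two pieces are $N_A$ and $N_B$, positroids on non-crossing sets; if $f\ne e$, say $f\in T_A$, then cutting $T_A$ along $f$ gives positroids $P_1,P_2$ on non-crossing sets with $e$ in one of them, say $e\in E(P_2)$, and cutting $T$ along $f$ then gives $P_1$ on one side and $P_2\oplus_2 N_B$ on the other; here $E(P_2)$ and $E(N_B)$ are still non-crossing — $e$ is at the boundary of the cyclic interval $A$, so inflating $e$ to all of $B$ leaves $E(P_2)$ a cyclic interval — so the Key Lemma applies and the two resulting ground-sets are again non-crossing. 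Finally, if the vertices of $T_A$ and $T_B$ adjacent to $e$ are both circuits, or both cocircuits, then $T$ is not yet canonical; but the $2$-sum of two circuits on consecutive cyclic intervals is a circuit on their union, and circuits and cocircuits are always positroids, so merging this one pair — which, by associativity of the $2$-sum, leaves every matroid obtained by cutting unchanged — restores canonicality while preserving all the required properties. Relabelling the edges as in Theorem~\ref{thm:canon-tree} produces the canonical tree decomposition of $M$, completing the induction.

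The hardest step should be the crux in the necessity direction: a $2$-connected positroid that is not $3$-connected has a $2$-separation into two cyclic intervals. The positroid hypothesis is genuinely needed — the $2$-sum of two copies of $U^2_4$ along interleaved element sets is a $2$-connected, non-$3$-connected matroid with no cyclic $2$-separation at all — so submodularity of $\lambda_M$ by itself will not suffice. My plan is to take an exact $2$-separation $(A,B)$ with $|A|$ minimal; minimality forces every cyclic block of $A$ to be a singleton or to have connectivity at least $2$. Assuming $A$ is not a single cyclic interval, I would pick a circuit and a cocircuit of $M$ each meeting both $A$ and $B$ (available because $\lambda_M(A)=1$), and use their enforced non-crossing with the circuits and cocircuits confined inside individual blocks to show that moving a maximal block of $B$ across the separation either yields a strictly smaller $2$-separation, contradicting minimality, or yields a disjoint crossing circuit--cocircuit pair of $M$, contradicting Theorem~\ref{thm:pos-crossing}. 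Making this block-moving argument into a clean induction on the number of cyclic blocks, and disposing of the degenerate single-element blocks, is the technical core of the proof.
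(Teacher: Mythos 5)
The paper itself contains no proof of this theorem: it is quoted as Theorem 2.27 of the authors' companion paper, so there is no in-paper argument to compare yours against, and I can only assess the proposal on its own terms. Your architecture is the standard and correct one, and the sufficiency half is essentially complete: the Key Lemma is true, the orthogonality fact $\lvert C\cap C^*\rvert\neq 1$ correctly eliminates the case where the circuit and the cocircuit both cross the separation, and the transport of crossing quadruples works because, once $A$ and $B$ are complementary cyclic intervals with $e$ in a boundary gap, an alternating quadruple whose cocircuit elements lie in $A$ can have at most one of its circuit elements in $B$, and that element occupies the same cyclic position relative to $A$ as $e$ does.

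The genuine gap is in the necessity direction, exactly where you flag it. The claim that every $2$-connected, non-$3$-connected positroid admits an exact $2$-separation $(A,B)$ with $A$ and $B$ cyclic intervals carries essentially all of the mathematical content of that direction --- the construction of $N_A$ and $N_B$, the gluing of $T_A$ to $T_B$, and the merging of adjacent circuit (or cocircuit) vertices are routine once it is granted --- and you supply only a plan for it. As written, the plan does not go through. Minimality of $\lvert A\rvert$ among $2$-separations does not control the structure you need: the obstruction to $A$ being a cyclic interval is the number of maximal cyclic intervals into which it splits, not its cardinality, and a side of smaller cardinality can split into more intervals; it is also not justified what ``connectivity at least $2$'' of a cyclic block means here or why minimality would force it. The block-moving step is likewise only asserted: to transfer a maximal block of $B$ across the separation you must certify that the modified partition still has $\lambda_M=1$, or else exhibit a concrete disjoint crossing circuit--cocircuit pair, and a single circuit and cocircuit ``each meeting both sides'' chosen arbitrarily need not interact with the particular block being moved. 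Until this lemma is established, the induction cannot begin for any positroid that is $2$-connected but neither $3$-connected, a circuit, nor a cocircuit (for instance, a positroid that is a $2$-sum of two copies of $U^2_4$), so the necessity direction remains unproved.
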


\subsection{Decorated permutations and Grassmann necklaces}

A \emph{decorated permutation} on a totally ordered set $X = \{i_1 < i_2 < \cdots < i_n\}$ is a pair $\pi^: = (\pi,col)$ consisting of a permutation $\pi : X \to X$ and a function \[col : \{\mbox{fixed points of }\pi\} \to \{-1,1\}.\]
We define the inverse decorated permutation $(\pi^:)^{-1} = (\pi^{-1},-col)$. For a fixed point $i_j$ of $\pi^:$, we denote
\begin{equation*}
    \pi^:(i_j) =
    \begin{cases}
        \overline{i_j}, & \text{if } col(i_j) = -1\\
        \underline{i_j}, & \text{if } col(i_j) = 1.
    \end{cases}
\end{equation*}
Let $i_j,i_k \in X$ and $Y \subseteq X$. We take $\min_{\leq_{i_j}}(Y)$ to be the minimum element of $Y$ with respect to the the total order $\leq_{i_j}$. We take the cyclic interval $[i_j,i_k]$ to be the set of all elements $i_t \in X$ such that $i_j \leq_{i_j} i_t  \leq_{i_j} i_k$.

The following map \{ordered matroids on $X$\} $\to$ \{decorated permutations on $X$\} is equivalent to the composition of two maps given by Postnikov in~\cite{postnikov2006total}. Let $M$ be a matroid on $X$ and $i_j \in X$, then the decorated permutation associated to $M$, denoted by $\pi^:_M$, is given by

\begin{equation*}
    \pi^:_M(i_j) :=
    \begin{cases}
        \overline{i_j}, & \text{if } i_k \text{ is a coloop}\\
        \underline{i_j}, & \text{if } i_k \text{ is a loop}\\
        \min_{\leq i_j} \left\{ i_k \in X : i_j \in \cl\big([i_{j+1},i_k]\big) \right\}, & \text{otherwise.}
    \end{cases}
\end{equation*}
This map interacts nicely with matroid duality, as shown in the following result.
\begin{cor}[Corollary 13 in~\cite{oh2009combinatorics}] \label{cor:dec-perm-dual}
For any ordered matroid $M$, we have that $\pi^: _M=(\pi^:_{M^*})^{-1}$.
\end{cor}

Let us now consider the interactions between decorated permutations and the matroid operations of direct sums and $2$-sums. Constructions similar to the ones we present have been given in~\cite[Section 4]{moerman2021grass} and~\cite[Section 12]{parisi2021m}. Ours differ by considering decorated permutations on any finite subset of a totally ordered set, not necessarily $[n]$ for some integer $n$. This is intended to deal with the technical challenges of decompositions of decorated permutations induced by direct sum and $2$-sum decompositions of their corresponding matroids. We begin by defining the following binary operation on decorated permutations.

\begin{defi}
    Let $X$ be a totally ordered set and let $\pi^:_1, \pi^:_2$ be decorated permutations on $X_1$ and $X_2$ respectively, such that $X_1$ and $X_2$ are disjoint subsets of $X$. We define the \emph{disjoint union} of $\pi^:_1$ and $\pi^:_2$, which we denote by $\pi^:_1 \sqcup \pi^:_2$, as the decorated permutation given by
    \begin{equation*}
        (\pi^:_1 \sqcup \pi^:_2)(i) =
        \begin{cases}
            \pi^:_1(i), & \text{if } i \in X_1\\
            \pi^:_2(i), & \text{if } i \in X_2.
        \end{cases}
    \end{equation*}
\end{defi}

\begin{rmk} \label{rmk:dec-perm-direct-sum}
    Let $M$ and $N$ be matroids on disjoint subsets of a totally ordered set $X$. Then,
    \begin{equation*}
        \pi^:_{M \oplus N} = \pi^:_M \sqcup \pi^:_N.
    \end{equation*}
\end{rmk}

We now turn our attention to the decorated permutations of the $2$-sums of matroids. Our construction is analogous to the amalgamations of decorated permutations in~\cite[Section 4]{moerman2021grass}

\begin{lemma}[\cite{parisi2021m,quail2024positroid}]\label{dec-perm-2sum}
    Let $M$ and $N$ be matroids on subsets of $X = \{i_1 < i_2 < \cdots < i_n\}$ such that $\lvert E(M) \rvert \geq 2$, $\lvert E(N) \rvert \geq 2$, and $E(M) \cap E(N) = \{i_e\}$. Suppose that $E(M)$ and $E(N)$ are non-crossing subsets of $X$. Suppose further that $i_e$ is a loop or coloop of $M$ or $N$, then $\pi^:_{M \oplus_2 N}$ is given by
    \begin{equation*}
        \pi^:_{M \oplus_2 N} = \pi^:_{M \setminus i_e \oplus N \setminus i_e}.
    \end{equation*}
    Suppose instead that $i_e$ is neither a loop nor a coloop of $M$ or $N$, then $\pi^:_{M \oplus_2 N}$ is given by
    \begin{equation*}
        \pi^:_{M \oplus_2 N}(i_j) :=
        \begin{cases}
            \pi^:_{M}(i_j), & \text{if } i_j \in E(M) \setminus E(N), \pi^:_{M}(i_j) \neq i_e\\
            \pi^:_{N}(i_e), & \text{if } i_j \in E(M) \setminus E(N), \pi^:_{M}(i_j) = i_e\\
            \pi^:_{N}(i_j), & \text{if } i_j \in E(N) \setminus E(M), \pi^:_{N}(i_j) \neq i_e\\
            \pi^:_{M}(i_e), & \text{if } i_j \in E(N) \setminus E(M), \pi^:_{N}(i_j) = i_e.
        \end{cases}
    \end{equation*}
\end{lemma}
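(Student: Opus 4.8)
The loop/coloop case is immediate: with the usual convention that $M\oplus_2 N:=(M\setminus i_e)\oplus(N\setminus i_e)$ when $i_e$ is a loop or coloop of $M$ or $N$ (for instance, if $i_e$ is a coloop of $M$ then $M$ has no circuit through $i_e$, so the $2$-sum's circuit set is already $\mathcal{C}(M\setminus i_e)\cup\mathcal{C}(N\setminus i_e)$), the first formula is exactly Remark~\ref{rmk:dec-perm-direct-sum}. So assume from now on that $i_e$ is neither a loop nor a coloop of $M$ or of $N$, and write $P:=M\oplus_2 N$, $E_M:=E(M)\setminus\{i_e\}$, $E_N:=E(N)\setminus\{i_e\}$, so that $E(P)=E_M\sqcup E_N$.

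The plan is to compute $\pi^:_P(i_j)$ from the definition for each $i_j\in E(P)$, via a closure formula for the $2$-sum. First I would record the rank formula for $P$: for $A\subseteq E(P)$ with $A_M:=A\cap E_M$ and $A_N:=A\cap E_N$, one has $r_P(A)=r_M(A_M)+r_N(A_N)-\varepsilon(A)$, where $\varepsilon(A)=1$ when $i_e\in\cl_M(A_M)$ and $i_e\in\cl_N(A_N)$, and $\varepsilon(A)=0$ otherwise (this is the parallel-connection identity $M\oplus_2 N=P(M,N)\setminus i_e$, and it can also be read off directly from the circuit list in the definition). A short computation then shows, for $i_j\in E_M$, that $i_j\in\cl_P(A)$ holds if and only if either $i_j\in\cl_M(A_M)$, or both $i_e\in\cl_N(A_N)$ and $i_j\in\cl_M(A_M\cup\{i_e\})$; symmetrically for $i_j\in E_N$. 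In particular the loops and coloops of $P$ lying in $E_M$ (resp.\ $E_N$) are exactly the loops and coloops of $M$ (resp.\ $N$), which settles the rows of the claimed $\pi^:$-formula in which $\pi^:_M$ or $\pi^:_N$ takes a loop or coloop value.

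Next I would invoke the non-crossing hypothesis. Since $E(M)$ and $E(N)$ are non-crossing in $X$ and meet only in $i_e$, the complementary sets $E_M$ and $E_N$ are non-crossing subsets of $E(P)$, hence each is a cyclic arc; moreover $i_e$ cannot lie cyclically strictly between two elements of $E_M$ inside $E(M)$ --- that would exhibit a crossing of $E(M)$ with $E(N)$ --- and likewise for $E(N)$. Thus, after relabelling, the cyclic orders are $E(P)=(u_1,\dots,u_s,v_1,\dots,v_t)$, $E(M)=(i_e,u_1,\dots,u_s)$ and $E(N)=(i_e,v_1,\dots,v_t)$. This identifies a cyclic interval of $E(P)$ beginning in $E_M$ with a cyclic interval of $E(M)$, in such a way that (together with the switch $i_e\in\cl_N(A_N)$) the element $i_e$ effectively enters the $E(M)$-interval precisely when the $E(P)$-sweep leaves $E_M$ for $E_N$; and symmetrically for intervals beginning in $E_N$.

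Finally I would run the sweep defining $\pi^:_P(u_a)$ for $u_a\in E_M$ (the case $i_j\in E_N$ being symmetric), tracking the two switches $i_e\in\cl_M(A_M)$ and $i_e\in\cl_N(A_N)$. While the sweep stays in $E_M$ the condition ``$u_a\in\cl_P$'' reduces to ``$u_a\in\cl_M$'' (as $i_e\notin\cl_N(\emptyset)$), and the interval dictionary then gives: if $\pi^:_M(u_a)\neq i_e$ the $M$- and $P$-sweeps resolve at the same element, so $\pi^:_P(u_a)=\pi^:_M(u_a)$; whereas if $\pi^:_M(u_a)=i_e$ (for $u_a=u_s$ this just means $u_s$ is parallel to $i_e$ in $M$), then $u_a$ never enters $\cl_M(A_M)$ along the $E_M$-part of the sweep but does lie in $\cl_M(A_M\cup\{i_e\})$, so on continuing into $E_N$ the condition $u_a\in\cl_P(A)$ first holds exactly when $i_e\in\cl_N(A_N)$ first holds, i.e.\ at $v_c$ with $c$ minimal such that $i_e\in\cl_N(\{v_1,\dots,v_c\})$ --- and that $v_c$ is precisely $\pi^:_N(i_e)$. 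Matching with the four rows of the statement then finishes the proof. The work is almost entirely in the bookkeeping of this last step, which I expect to be the main obstacle: one must treat carefully the elements at the ends of the arcs $E_M,E_N$, where ``successor in $E(P)$'' and ``successor in $E(M)$'' differ by the presence of $i_e$, and verify in each configuration that the switch $i_e\in\cl_N(A_N)$ flips exactly at the step predicted by $\pi^:_N(i_e)$, with no earlier spurious resolution of the sweep.
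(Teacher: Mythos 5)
The paper does not actually prove Lemma~\ref{dec-perm-2sum}; it imports the statement from~\cite{parisi2021m} and~\cite{quail2024positroid}, so there is no internal argument to compare yours against. Judged on its own, your outline is correct and is essentially the expected proof. The three ingredients you isolate all check out: the rank formula $r_P(A)=r_M(A_M)+r_N(A_N)-\varepsilon(A)$ follows from $M\oplus_2 N=P(M,N)\setminus i_e$ exactly as you say; the closure criterion for $i_j\in E_M$ (namely $i_j\in\cl_M(A_M)$, or $i_e\in\cl_N(A_N)$ together with $i_j\in\cl_M(A_M\cup\{i_e\})$, the two clauses being reconciled by MacLane--Steinitz exchange) is correct and immediately gives that loops and coloops are inherited from the summands, using that $i_e$ is neither a loop nor a coloop of either part; and the non-crossing hypothesis does force $E_M$ and $E_N$ to be complementary cyclic arcs of $E(P)$ with $i_e$ sitting in one of the two gaps, which is all the sweep needs. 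The one place where your sketch is genuinely compressed is the subcase of row~1 in which the $M$-sweep for $u_a$ resolves only \emph{after} passing $i_e$, i.e.\ $\pi^:_M(u_a)=u_b$ with $u_b>_{u_a} i_e$: there you must check that the $P$-sweep traverses all of $E_N$ without resolving. This does follow from your closure formula, because $A_M$ is frozen at $\{u_{a+1},\dots,u_s\}$ throughout the $E_N$-portion, so a resolution there would force $u_a\in\cl_M(\{u_{a+1},\dots,u_s,i_e\})$ and hence $\pi^:_M(u_a)\leq_{u_a} i_e$, a contradiction; and once the sweep re-enters $E_M$ the switch $i_e\in\cl_N(E_N)$ is on (as $i_e$ is not a coloop of $N$), so the $P$-condition coincides with the $M$-condition with $i_e$ adjoined, matching the $M$-sweep step for step. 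With that subcase made explicit, and the symmetric argument for $i_j\in E_N$, the four rows of the formula all follow; your handling of the degenerate loop/coloop case as the conventional reduction to Remark~\ref{rmk:dec-perm-direct-sum} is also consistent with how the paper's definition of the $2$-sum (which excludes that case) must be read.
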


A \emph{Grassmann necklace} on a totally ordered set $X = \{i_1 < i_2 < \cdots < i_n\}$ is a sequence $\mathcal{J} = (J_1,\ldots,J_n)$ of subsets $J_j \subseteq X$ given by
\[
    J_{j+1} =
    \begin{cases}
        (J_j \setminus \{i_j\}) \cup \{i_k\}, & \text{if } i_j \in J_j\\
        J_j, & \text{otherwise,}
    \end{cases}
\]
where the indices are taken modulo $n$. Postnikov defines in~\cite{postnikov2006total} the map \{ordered matroids on $X$\} $\to$ \{Grassmann necklaces on $X$\} as follows. Let $M$ be a matroid on $X$, then the Grassmann necklace associated to $M$ is $\mathcal{J}(M) = (J_1,\ldots,J_n)$, where $J_j$ is the lexicographically minimal basis with respect to $\leq_{i_j}$.

When the maps \{ordered matroids on $X$\} $\to$ \{decorated permutations on $X$\} and \{ordered matroids on $X$\} $\to$ \{Grassmann necklaces on $X$\} are restricted to the positroids on $X$ they give bijections, as shown in~\cite{postnikov2006total}. Furthermore, it follows from~\cite{postnikov2006total} that these maps commute. We illustrate this commutativity in Figure~\ref{fig:commute-triangle}.

\begin{figure}
    \begin{tikzpicture}
    \tikzset{edge/.style = {->,> = latex'}}
        \node (1) at (0,2){\{ordered matroids on $X$\}};
        \node (3) at (7,2){\{decorated permutations on $X$\}};
        \node (4) at (7,4){\{Grassmann necklaces on $X$\}};

        \draw[edge] (1) to (3);
        \draw[edge] (1) to (4);
        \draw[edge] (3) to (4);
        \draw[edge] (4) to (3);
    \end{tikzpicture}
    \caption{For any finite totally ordered set $X$, this diagram commutes.}
    \label{fig:commute-triangle}
\end{figure}

\subsection{Positroid envelope classes}

Let $M$ and $N$ be matroids. A \emph{weak map} $\varphi : M \to N$ is a bijection $E(M) \leftrightarrow E(N)$ such that for all $I \in \mathcal{I}(N)$, $\varphi^{-1}(I) \in \mathcal{I}(M)$. A \emph{rank-preserving weak map} is a weak map between matroids of the same rank. Rank-preserving weak maps give a partial order on matroids that share the same ground set. Suppose that $\mathbb{1} : M \to N$ is a rank-preserving weak map, then we take $M \geq N$. Equivalently, $M \geq N$ if $\mathcal{B}(N) \subseteq \mathcal{B}(M)$. Positroids can be characterized as the unique maximum, with respect to the rank-preserving weak map partial order, ordered matroid corresponding to a given Grassmann necklace. This is formalized in the following result.

\begin{thm}[Theorem 8 in~\cite{oh2011positroids}] \label{thm:pos-intersect}
    Let $P$ be a matroid on $X = \{i_1 < i_2 < \cdots < i_n\}$ of rank $k$ and $\mathcal{J}(P) = (J_1, J_2, \ldots, J_n)$ its corresponding Grassmann necklace. Then $P$ is a positroid if and only if
    \begin{equation*}
        \mathcal{B}(P) = \bigcap^n_{j=1} \left\{ B = {X \choose k} : J_j \leq_{i_j} B \right\}.
    \end{equation*}
\end{thm}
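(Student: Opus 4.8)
The plan is to rewrite the right-hand side as a collection of rank inequalities on cyclic intervals, and then to appeal to the constructive half of Postnikov's theory of Grassmann necklaces. Write $\mathcal{M}_{\mathcal{J}}:=\bigcap_{j=1}^{n}\bigl\{B\in\binom{X}{k}: J_{i_j}\leq_{i_j}B\bigr\}$. First I would record the trivial inclusion, valid for every matroid: since $J_{i_j}$ is the $\leq_{i_j}$-minimal basis of $P$, every basis of $P$ lies in each set $\{B: J_{i_j}\leq_{i_j}B\}$, so $\mathcal{B}(P)\subseteq\mathcal{M}_{\mathcal{J}(P)}$. Hence the theorem is equivalent to the assertion that the reverse inclusion $\mathcal{M}_{\mathcal{J}(P)}\subseteq\mathcal{B}(P)$ holds if and only if $P$ is a positroid.

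For the rewriting, the key point is that each $J_{i_j}$ is the \emph{greedy} basis of $P$ with respect to $\leq_{i_j}$, since the lexicographically minimal basis coincides with the greedy one. Listing its elements in $\leq_{i_j}$-order as $a^{(j)}_{1}<_{i_j}\cdots<_{i_j}a^{(j)}_{k}$, the element $a^{(j)}_{t}$ is the $\leq_{i_j}$-least $e\in X$ with $r_{P}(\{e'\in X: e'\leq_{i_j}e\})=t$; consequently the initial segment $I^{(j)}_{t}:=\{e'\in X: e'<_{i_j}a^{(j)}_{t}\}$, which is a cyclic interval, satisfies $r_{P}(I^{(j)}_{t})=t-1$. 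Now, listing $B$ in $\leq_{i_j}$-order as $b_{1}<_{i_j}\cdots<_{i_j}b_{k}$, the comparison $J_{i_j}\leq_{i_j}B$ fails precisely when $b_{t}<_{i_j}a^{(j)}_{t}$ for some $t$, and the latter forces $b_{1},\dots,b_{t}\in I^{(j)}_{t}$, i.e.\ $|B\cap I^{(j)}_{t}|>r_{P}(I^{(j)}_{t})$. Conversely, if some proper cyclic interval $I$ has $|B\cap I|>r_{P}(I)$, then writing $I=\{e': e'<_{i_j}a\}$ with $s:=r_{P}(I)$ one checks (by running the greedy algorithm along the initial segment $I$) that $a^{(j)}_{1},\dots,a^{(j)}_{s}\in I$ while $a^{(j)}_{s+1}\geq_{i_j}a$, so the $(s{+}1)$-st element of $B$ lies in $I$, forcing $J_{i_j}\not\leq_{i_j}B$. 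As every proper cyclic interval of $X$ has the form $\{e': e'<_{i_j}a\}$ for suitable $j$ and $a$, these two observations give
\[
\mathcal{M}_{\mathcal{J}(P)}=\Bigl\{B\in\binom{X}{k}: |B\cap I|\leq r_{P}(I)\text{ for every cyclic interval }I\Bigr\}.
\]

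With this in hand, I would finish using the constructive side of Postnikov's correspondence~\cite{postnikov2006total}: for every Grassmann necklace $\mathcal{J}$ the collection $\mathcal{M}_{\mathcal{J}}$ is the set of bases of a positroid (see also~\cite{ardila2016positroids} for the equivalent polytopal statement that, for a positroid, the cyclic-interval rank inequalities already define the bases). Its Grassmann necklace is $\mathcal{J}$ itself, since $\mathcal{M}_{\mathcal{J}}$ contains every $J_{i_j}$ --- each $J_{i_\ell}$, being $\leq_{i_\ell}$-minimal among bases, satisfies $J_{i_\ell}\leq_{i_\ell}J_{i_j}$ --- and $J_{i_j}$ is $\leq_{i_j}$-minimal in $\mathcal{M}_{\mathcal{J}}$. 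Both directions then follow from the bijection between positroids and Grassmann necklaces: if $P$ is a positroid, then $P$ and the matroid with bases $\mathcal{M}_{\mathcal{J}(P)}$ are positroids with the same necklace, hence equal, so $\mathcal{B}(P)=\mathcal{M}_{\mathcal{J}(P)}$; and if $\mathcal{B}(P)=\mathcal{M}_{\mathcal{J}(P)}$, then $P$ equals that positroid. The one genuinely substantive ingredient, and the step I expect to be the main obstacle, is that $\mathcal{M}_{\mathcal{J}}$ is always a positroid: this is not a consequence of matroid combinatorics by itself, and would be obtained either from Postnikov's explicit totally nonnegative matrix realizing $\mathcal{M}_{\mathcal{J}}$, or --- using only the machinery of this paper --- by deducing from Theorem~\ref{thm:pos-crossing} that in any matroid with no crossing disjoint circuit--cocircuit pair every dependent $k$-set admits a cyclic interval carrying strictly more elements than its rank, the delicate case being a circuit that is not itself a cyclic interval, which one handles by separating the circuit from the elements lying in its ``gaps'' using cocircuits and then invoking the non-crossing hypothesis.
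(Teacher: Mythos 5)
The paper does not prove this statement: it is imported verbatim as Theorem~8 of Oh~\cite{oh2011positroids}, so there is no internal proof to compare yours against. Judged on its own terms, your write-up gets the easy and the structural parts right. The inclusion $\mathcal{B}(P)\subseteq\mathcal{M}_{\mathcal{J}(P)}$ for an arbitrary matroid via Gale's theorem (the greedy basis is the Gale-minimum, not merely the lex-minimum) is correct, and your translation of the Gale conditions into the cyclic-interval rank inequalities $|B\cap I|\leq r_P(I)$ is accurate and is a genuinely useful reformulation. The deduction of both directions from ``$\mathcal{M}_{\mathcal{J}}$ is the basis collection of a positroid whose necklace is $\mathcal{J}$,'' together with Postnikov's bijection, is also logically sound.

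The problem is that the ingredient you isolate as ``the one genuinely substantive'' step is not an ingredient of the theorem --- it essentially \emph{is} the theorem. Oh's result is precisely that the intersection of the cyclically shifted Schubert matroids determined by $\mathcal{J}$ is a matroid and coincides with the positroid of $\mathcal{J}$; intersections of basis collections of matroids are not matroids in general, so nothing short of Oh's basis-exchange argument (or Postnikov's explicit totally nonnegative realization) closes this. Your fallback sketch via Theorem~\ref{thm:pos-crossing} --- detecting every dependent $k$-set by a cyclic interval, handling circuits that are not cyclic intervals by separating their ``gaps'' with cocircuits --- is the right shape for the inclusion $\mathcal{M}_{\mathcal{J}(P)}\subseteq\mathcal{B}(P)$ when $P$ is a positroid, but as written it is a single sentence standing in for the entire combinatorial core, and it does not address the converse direction at all (there one must know that $\mathcal{M}_{\mathcal{J}(P)}$ is realizable by a totally nonnegative matrix for an \emph{arbitrary} matroid $P$, which requires the surjectivity of the necklace map onto positroids). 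So what you have is a correct reduction plus a citation of the result being proved, not an independent proof; that is acceptable only if, like the paper, you are content to cite Oh for the crux.
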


Let $M$ be an ordered matroid with corresponding Grassmann necklace $\mathcal{J}(M)$. The positroid $P$ is called the \emph{positroid envelope} of $M$ if $\mathcal{J}(P) = \mathcal{J}(M)$~\cite{knutson2013positroid}. The \emph{positroid envelope class of $P$}, denoted by $\Omega_P$, is the set of all ordered matroids whose positroid envelope is $P$. This implies that for all matroids $M \in \Omega_P$, $\mathbb{1} : P \to M$ is a rank-preserving weak map. It immediately follows from Corollary~\ref{cor:uni-free} that if a positroid $P$ is free of a uniform matroid $U^k_n$, then for all matroids $M \in \Omega_P$, $M$ is $U^k_n$-free.

\begin{cor}[Corollary 5.9 in~\cite{lucas1975weak}]\label{cor:uni-free}
Let $M$ and $N$ be matroids such that $\mathbb{1} : M \to N$ is a rank-preserving weak map. If $M$ is $U^k_n$-free then $N$ is $U^k_n$-free.
\end{cor}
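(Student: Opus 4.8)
The plan is to prove the contrapositive: if $\mathbb{1} : M \to N$ is a rank-preserving weak map and $N$ has a minor isomorphic to $U^k_n$, then so does $M$. I would identify $E(M)$ with $E(N) = E$ along the bijection and use the characterization recalled above, namely that $\mathbb{1} : M \to N$ being a rank-preserving weak map is equivalent to $\mathcal{B}(N) \subseteq \mathcal{B}(M)$ together with $r_M(E) = r_N(E)$. This gives $\mathcal{I}(N) \subseteq \mathcal{I}(M)$, hence $r_N(X) \le r_M(X)$ for every $X \subseteq E$, and also that every loop (resp.\ coloop) of $M$ is a loop (resp.\ coloop) of $N$. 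The argument is an induction on $|E|$. For the base case $|E| \le n$, a $U^k_n$-minor of $N$ can only be $N$ itself, which forces $\mathcal{B}(N)$ to be the set of all $k$-subsets of $E$; since $r_M(E) = k$ the same then holds for $M$, so $M \cong U^k_n$ and we are done.

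The heart of the induction is a one-element lemma: for every $e \in E$, at least one of the identity maps $M \setminus e \to N \setminus e$ and $M / e \to N / e$ is a rank-preserving weak map. I would verify that the deletion map is always a weak map, and that a short rank count (using $r_M(E)=r_N(E)$ and that a coloop of $M$ is a coloop of $N$) shows it preserves rank unless $e$ is a coloop of $N$ but not of $M$. Dually, the contraction map is a weak map unless $e$ is a loop of $N$ but not of $M$, and when it is a weak map a similar count shows it preserves rank. Since $e$ cannot be simultaneously a loop and a coloop of $N$, these two exceptional situations are mutually exclusive, proving the lemma.

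For the inductive step, write the $U^k_n$-minor of $N$ as $N / C \setminus D$ with $C, D$ disjoint and $|E \setminus (C \cup D)| = n$; as $|E| > n$ we may fix $e \in C \cup D$. Suppose $e \in D$. If $M \setminus e \to N \setminus e$ is rank-preserving, then $N \setminus e$ still has the $U^k_n$-minor $(N \setminus e) / C \setminus (D \setminus e)$, so by induction $M \setminus e$, and hence $M$, has a $U^k_n$-minor. Otherwise $e$ is a coloop of $N$, so $N / e = N \setminus e$ carries the same minor, and since a coloop is not a loop the map $M / e \to N / e$ is rank-preserving; induction applied to $M / e \ge N / e$ then finishes the case. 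The case $e \in C$ is completely symmetric, with the roles of deletion and contraction, and of loop and coloop, interchanged: if $M / e \to N / e$ is rank-preserving we recurse through contraction, and otherwise $e$ is a loop of $N$, so $N \setminus e = N / e$ retains the minor and $M \setminus e \to N \setminus e$ is rank-preserving.

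The only delicate point — and the step I expect to be the main obstacle — is that rank-preservation may force us to contract an element on the $M$ side even though it was deleted on the $N$ side, or vice versa. I would resolve this using the elementary fact that deleting a loop, or deleting a coloop, has the same effect as contracting it: this lets us re-describe the troublesome element's role inside $N$ so that it matches the operation dictated on the $M$ side, all without disturbing the $U^k_n$-minor. Everything else is routine bookkeeping within the induction.
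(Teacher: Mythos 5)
The paper does not prove this statement; it is imported verbatim as Corollary 5.9 of Lucas~\cite{lucas1975weak}, so there is no internal proof to compare against. Your argument is a correct, self-contained replacement. The two points that require care both check out. First, the single-element lemma holds: the deletion map $M\setminus e \to N\setminus e$ is always a weak map and fails to preserve rank exactly when $e$ is a coloop of $N$ but not of $M$, while the contraction map $M/e \to N/e$ fails exactly when $e$ is a loop of $N$ but not of $M$ (here you correctly use that $\mathcal{I}(N)\subseteq\mathcal{I}(M)$ and $\mathcal{B}(N)\subseteq\mathcal{B}(M)$ with equal rank force every loop, respectively coloop, of $M$ to be one of $N$, so the only obstructions run in the stated direction); these two failure modes cannot coincide because no element is both a loop and a coloop of $N$. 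Second, the apparent mismatch between the operation dictated on the $M$ side and the one performed on the $N$ side only arises when $e$ is a loop or a coloop of $N$, in which case $N\setminus e = N/e$ and the $U^k_n$-minor survives either operation, exactly as you say. Together with the trivial base case $\lvert E\rvert \le n$ this closes the induction. What the citation buys the paper is brevity; what your proof buys is the observation that the statement is an elementary consequence of $\mathcal{B}(N)\subseteq\mathcal{B}(M)$ plus equality of ranks, with no need for Lucas's more general theory of weak maps and minors.
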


Equivalently, for a positroid $P$, the envelope class of $P$ can be defined as $\Omega_P := \{M : \pi^:_M = \pi^:_P\}$. It immediately follows from Corollary~\ref{cor:dec-perm-dual} that $|\Omega_P| = |\Omega_{P^*}|$. Lemma~\ref{lem:poset-duality} shows that $\Omega_P$ and $\Omega_{P^*}$ are isomorphic as posets.

\begin{lemma} \label{lem:poset-duality}
    For a positroid $P$, $\Omega_P$ and $\Omega_{P^*}$ are isomorphic as posets, where the isomorphism is given by matroid duality.
\end{lemma}

\begin{proof}
    Let $E$ be the common ground-set of all matroids in $\Omega_P$. By construction, $|\Omega_P| = |\Omega_{P^*}|$, so it is sufficient to show that for all $\{N \leq M\} \subseteq \Omega_P$, $\{N^* \leq M^*\} \subseteq \Omega_{P^*}$. For $\{N \leq M\} \subseteq \Omega_P$, $\mathcal{B}(N) \subseteq \mathcal{B}(M)$. Thus,
    \[
        \mathcal{B}(N^*) = \{E \setminus B : B \in \mathcal{B}(N)\} \subseteq \{E \setminus B : B \in \mathcal{B}(M)\} = \mathcal{B}(M^*)
    \]
    and therefore $N^* \leq M^*$.
\end{proof}

Lemma~\ref{lem:weak-k-separate} shows that rank-preserving weak maps preserve $k$-separations. It immediately follows that a direct sum decomposition of a positroid $P$ induces a direct sum decomposition of all matroids contained in the positroid envelope class of $P$. This is formalized in the following.

\begin{lemma}[Lemma 2.48 in~\cite{quail2024positroid}] \label{lem:weak-k-separate}
    Let $M$ and $N$ be matroids such that $\mathbb{1} : M \to N$ is a rank-preserving weak map. Suppose that for $X \subseteq E$, $(X,E\setminus X)$ is a $k$-separation of $M$. Then $(X,E)$ is a $k$-separation of $N$. 
\end{lemma}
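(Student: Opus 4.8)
The plan is to show that the connectivity function can only decrease under a rank-preserving weak map: the defining inequality $\lambda(X) < k$ is then inherited by $N$, while the cardinality condition is automatic because $M$ and $N$ share a ground set.

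First I would record the pointwise rank inequality forced by the weak map. Since $\mathbb{1} : M \to N$ is a weak map, the definition applied to $\varphi = \mathbb{1}$ gives $\mathbb{1}^{-1}(I) = I \in \mathcal{I}(M)$ for every $I \in \mathcal{I}(N)$; that is, $\mathcal{I}(N) \subseteq \mathcal{I}(M)$. Consequently, for every $A \subseteq E$, any subset of $A$ that is independent in $N$ is independent in $M$, so $r_N(A) \le r_M(A)$.

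Second, I would combine this with the rank-preserving hypothesis. Writing $E = E(M) = E(N)$, rank-preservation is the equality $r_N(N) = r_M(M)$ (the case $A = E$ of the rank functions). Hence, for any $X \subseteq E$,
\[
\lambda_N(X) = r_N(X) + r_N(E \setminus X) - r_N(N) \le r_M(X) + r_M(E \setminus X) - r_M(M) = \lambda_M(X),
\]
where the inequality uses $r_N(X) \le r_M(X)$ and $r_N(E \setminus X) \le r_M(E \setminus X)$. Finally, if $(X, E \setminus X)$ is a $k$-separation of $M$, then $\min\{\lvert X \rvert, \lvert E \setminus X \rvert\} \ge k$ — a condition on the common ground set — and $\lambda_M(X) < k$, so the displayed inequality gives $\lambda_N(X) \le \lambda_M(X) < k$, i.e. $(X, E \setminus X)$ is a $k$-separation of $N$.

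There is essentially no obstacle here; the only point that requires care is the \emph{direction} of the induced rank inequality (a weaker matroid has fewer independent sets, hence a smaller rank function) together with the observation that rank-preservation pins down the additive constant $r(N) = r(M)$, so the inequality survives the subtraction in the definition of $\lambda$. I would also note in passing that the same computation applied to the positroid envelope $P$ of an ordered matroid $M$ yields $\lambda_P(X) \ge \lambda_M(X)$ for all $X$, which is exactly what is needed to transfer a direct-sum (or, with Theorem~\ref{thm:pos-canon-tree-iff}, a $2$-sum) decomposition of $P$ to every member of $\Omega_P$.
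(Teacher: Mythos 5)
Your proof is correct, and since the paper only cites this lemma from \cite{quail2024positroid} without reproducing a proof, your argument --- monotonicity of the connectivity function under a rank-preserving weak map, via $\mathcal{I}(N) \subseteq \mathcal{I}(M)$ giving $r_N \le r_M$ pointwise while rank-preservation fixes the subtracted term $r_N(N) = r_M(M)$ --- is exactly the intended one. You also correctly read the conclusion ``$(X,E)$'' as the evident typo for $(X, E \setminus X)$, and the cardinality condition is indeed automatic on the shared ground set.
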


\begin{defi}
Let $M$ and $N$ be matroids on subsets of a totally ordered set $X$, with positroid envelopes $P$ and $Q$ respectively. Suppose that $E(M)$ and $E(N)$ are disjoint. Then we define the \emph{direct sum of envelope classes} $\Omega_P$ and $\Omega_Q$ as
\begin{equation*}
    \Omega_P \oplus \Omega_Q = \{M' \oplus N' : M' \in \Omega_P, N' \in \Omega_Q\}. 
\end{equation*}
\end{defi}

\begin{prop} [Proposition 2.52 in~\cite{quail2024positroid}] \label{prop:direct-sum-class-decomp}
    Let $P$ and $Q$ be positroids on disjoint, non-crossing subsets $X_1$ and $X_2$ of a totally ordered set $X$. Then
    \begin{equation*}
        \Omega_{P \oplus Q} = \Omega_P \oplus \Omega_Q.
    \end{equation*}
\end{prop}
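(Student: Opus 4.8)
The plan is to prove the two inclusions separately, using the decorated permutation of a matroid as the main bookkeeping device. The key reformulation I would record first is that, by the commuting diagram in Figure~\ref{fig:commute-triangle} together with Postnikov's bijections~\cite{postnikov2006total}, the positroid envelope of an ordered matroid $M$ is the unique positroid whose decorated permutation equals $\pi^:_M$; equivalently, for a positroid $R$ one has $M \in \Omega_R$ if and only if $\pi^:_M = \pi^:_R$. Everything below runs through this equivalence together with Remark~\ref{rmk:dec-perm-direct-sum}, which says that $\pi^:$ is compatible with direct sums on the nose.

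For $\Omega_P \oplus \Omega_Q \subseteq \Omega_{P \oplus Q}$, I would first check that $P \oplus Q$ is itself a positroid: by Theorem~\ref{thm:pos-direct-sum} the connected components of $P$ and of $Q$ are connected positroids (being deletions of positroids, by Proposition~\ref{pos-dual-minor}) whose ground sets non-crossing-partition $X_1$ and $X_2$ respectively, and since $X_1$ and $X_2$ are non-crossing the combined partition of $X_1 \cup X_2$ is again non-crossing, so the converse direction of Theorem~\ref{thm:pos-direct-sum} applies. Then, given $M' \in \Omega_P$ and $N' \in \Omega_Q$, I would compute $\pi^:_{M' \oplus N'} = \pi^:_{M'} \sqcup \pi^:_{N'} = \pi^:_P \sqcup \pi^:_Q = \pi^:_{P \oplus Q}$ using Remark~\ref{rmk:dec-perm-direct-sum} and the envelope hypotheses, and conclude that $M' \oplus N' \in \Omega_{P \oplus Q}$ because $P \oplus Q$ is a positroid with that decorated permutation.

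For $\Omega_{P \oplus Q} \subseteq \Omega_P \oplus \Omega_Q$, let $L \in \Omega_{P \oplus Q}$ (the cases $X_1 = \emptyset$ or $X_2 = \emptyset$ being trivial). I would observe that $(X_1, X_2)$ is a $1$-separation of $P \oplus Q$ since $\lambda_{P \oplus Q}(X_1) = r(P) + r(Q) - r(P \oplus Q) = 0$, that $\mathbb{1} : P \oplus Q \to L$ is a rank-preserving weak map, and hence by Lemma~\ref{lem:weak-k-separate} that $(X_1, X_2)$ is a $1$-separation of $L$ as well. Then $\lambda_L(X_1) = 0$ forces $L = (L \setminus X_2) \oplus (L \setminus X_1)$, by the standard characterization of matroid direct sums via the connectivity function~\cite{oxley2006matroid}. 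Writing $M' = L \setminus X_2$ and $N' = L \setminus X_1$, Remark~\ref{rmk:dec-perm-direct-sum} gives $\pi^:_{M'} \sqcup \pi^:_{N'} = \pi^:_L = \pi^:_{P \oplus Q} = \pi^:_P \sqcup \pi^:_Q$, and restricting this equality to $X_1$ and to $X_2$ yields $\pi^:_{M'} = \pi^:_P$ and $\pi^:_{N'} = \pi^:_Q$, i.e.\ $M' \in \Omega_P$ and $N' \in \Omega_Q$. Hence $L = M' \oplus N' \in \Omega_P \oplus \Omega_Q$.

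I expect the only genuine subtlety to be the final step of the second inclusion. One is tempted to argue that $\mathcal{B}(L) \subseteq \mathcal{B}(P \oplus Q)$ forces $\mathcal{B}(M') \subseteq \mathcal{B}(P)$ and therefore $M' \in \Omega_P$; the first part is fine, but lying below a positroid in the rank-preserving weak order is strictly weaker than lying in its envelope class, so this shortcut fails. This is precisely why the proof must carry the decorated permutation (equivalently the Grassmann necklace) through the direct-sum decomposition rather than only the weak-map order, and why Remark~\ref{rmk:dec-perm-direct-sum} is the crucial input. The one remaining bookkeeping point is verifying that non-crossingness of $X_1, X_2$ descends to the partition of $X_1 \cup X_2$ by the connected components of $P$ and $Q$, which I would handle via the arc description of non-crossing families.
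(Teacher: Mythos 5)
Your proof is correct; note that the paper itself only cites this proposition from~\cite{quail2024positroid} without reproving it, but your argument mirrors exactly the strategy the paper uses for the $2$-sum analogue (Proposition~\ref{prop:2sum-class-decomp}): one inclusion via Remark~\ref{rmk:dec-perm-direct-sum} and the identification of $\Omega_R$ with $\{M : \pi^:_M = \pi^:_R\}$, the other via the rank-preserving weak map $\mathbb{1} : P \oplus Q \to L$ and Lemma~\ref{lem:weak-k-separate} to force the induced decomposition $L = (L|X_1) \oplus (L|X_2)$. Your closing remark correctly identifies the one genuine subtlety --- that $\mathcal{B}(L) \subseteq \mathcal{B}(P \oplus Q)$ alone is not enough and the decorated permutation (equivalently the Grassmann necklace) must be carried through the decomposition --- which is precisely the role Lemma~\ref{dec-perm-2sum} and Lemma~\ref{lem:not-1-sep} play in the paper's $2$-sum argument.
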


For two posets $X_1$ and $X_2$, we equip the set $X_1 \times X_2$ with the partial order given by, for all $x_1,y_1 \in X_1$ and $x_2,y_2 \in X_2$, $(x_1,y_1) \leq (x_2,y_2)$ if $x_1 \leq x_2$ and $y_1 \leq y_2$. Lemma~\ref{lem:poset-direct-sum} shows that if a positroid envelope class decomposes under direct sums then it is isomorphic as a poset to the product of its positroid envelope class direct summands.

\begin{lemma} \label{lem:poset-direct-sum}
    Let $P$ and $Q$ be positroids on disjoint, non-crossing subsets of a totally ordered set. Then, $\Omega_{P \oplus Q}$ and $\Omega_P \times \Omega_Q$ are isomorphic as posets.
\end{lemma}

\begin{proof}
    We define the map $\varphi: \Omega_{P \oplus Q} \to \Omega_P \times \Omega_Q$ given by, for all $M \oplus N \in \Omega_{P \oplus Q}$, $\varphi(M \oplus N) = (M,N)$. This map is a bijection, so it remains to show that $\varphi$ and $\varphi^{-1}$ are order-preserving. Suppose that $M \oplus N, K \oplus L \in \Omega_{P \oplus Q}$ such that $K \oplus L \leq M \oplus N$, then $\mathcal{B}(K \oplus L) \subseteq \mathcal{B}(M \oplus N)$. Let $B_K \in \mathcal{B}(K)$ and $B_L \in \mathcal{B}(L)$ be arbitrary bases. Then, $B_K \oplus B_L \in \mathcal{B}(K \oplus L) \subseteq \mathcal{B}(M \oplus N)$. Furthermore, as $B_K \subseteq E(K) = E(M)$ and $B_L \subseteq E(L) = E(N)$, $B_K \in \mathcal{B}(M \oplus N | E(M)) = \mathcal{B}(M)$ and $B_L \in \mathcal{B}(M \oplus N | E(N)) = \mathcal{B}(N)$. Thus, $\mathcal{B}(K) \subseteq \mathcal{B}(M)$ and $\mathcal{B}(L) \subseteq \mathcal{B}(N)$. Therefore, $K \leq M$ and $L \leq N$, hence $(K,L) \leq (M,N)$.

    Suppose instead that $(K,L), (M,N) \in \Omega_P \times \Omega_Q$ such that $(K,L) \leq (M,N)$. Then, $K \leq M$ and $L \leq N$, so $\mathcal{B}(K) \subseteq \mathcal{B}(M)$ and $\mathcal{B}(L) \subseteq \mathcal{B}(N)$. Therefore, 
    \begin{align*}
        \mathcal{B}(K \oplus L) &= \{B_K \cup B_L : B_K \in \mathcal{B}(K), B_L \in \mathcal{B}(L)\}\\
        &\subseteq \{B_M \cup B_N : B_M \in \mathcal{B}(M), B_N \in \mathcal{B}(N)\} = \mathcal{B}(M \oplus N),
    \end{align*}
    hence $K \oplus L \leq M \oplus N$.
\end{proof}

We now build towards defining a \emph{$2$-sum of positroid envelope classes}, and a corresponding $2$-sum decomposition of positroid envelope classes. To do so, we use Lemmas~\ref{lem:circ-min} and~\ref{lem:circ-build} to prove that if $M$ has positroid envelope $P$, then a $2$-sum decomposition of $P$ induces a $2$-sum decomposition of $M$.

\begin{lemma}[Lemma 8.3.2 in~\cite{oxley2006matroid}] \label{lem:circ-min}
    Let $M$ be a matroid and $X \subseteq E$ such that $(X,E \setminus X)$ is a $2$-separation of $M$. Suppose that $C_1$ and $C_2$ are circuits of $M$ each of which meets both $X$ and $E \setminus X$. Then $C_1 \cap X$ is not a proper subset of $C_2 \cap X$.
\end{lemma}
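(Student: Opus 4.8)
The plan is to transfer the statement across the separation into a $2$-sum, where it becomes the elementary fact that the circuits of a matroid form an antichain. First I would argue that the $2$-separation is exact. By definition $\lambda_M(X)\le 1$; if $\lambda_M(X)=0$ then $M=(M|X)\oplus(M|(E\setminus X))$, so every circuit of $M$ lies entirely in $X$ or entirely in $E\setminus X$, contradicting the hypothesis that $C_1$ meets both. Hence $\lambda_M(X)=1$. Since also $\lvert X\rvert\ge 2$ and $\lvert E\setminus X\rvert\ge 2$, this is exactly the situation in which $M$ decomposes as a $2$-sum along the separation: there is an element $p\notin E$ and matroids $M_1$ on $X\cup\{p\}$ and $M_2$ on $(E\setminus X)\cup\{p\}$, with $p$ neither a loop nor a coloop of $M_1$ or $M_2$, such that $M=M_1\oplus_2 M_2$ (a standard fact; see~\cite[\S8.3]{oxley2006matroid}).

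Next I would read off the crossing circuits from the circuit description of a $2$-sum recalled above. A circuit of $M_1\oplus_2 M_2$ that meets both $X$ and $E\setminus X$ must be of the third type in that description, i.e.\ of the form $(D\cup D')\setminus\{p\}$ with $D\in\mathcal{C}(M_1)$, $D'\in\mathcal{C}(M_2)$ and $p\in D\cap D'$; moreover such a circuit $C$ satisfies $C\cap X=D\setminus\{p\}$, since $D'\setminus\{p\}\subseteq E\setminus X$ is disjoint from $X$. Applying this to $C_1$ and to $C_2$ produces circuits $D_1,D_2$ of $M_1$, each containing $p$, with $C_i\cap X=D_i\setminus\{p\}$ for $i=1,2$. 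Now suppose for contradiction that $C_1\cap X\subsetneq C_2\cap X$. Then $D_1=(C_1\cap X)\cup\{p\}\subsetneq(C_2\cap X)\cup\{p\}=D_2$, so $D_1$ is a circuit of $M_1$ that is a proper subset of the circuit $D_2$ of $M_1$; this is impossible, and the lemma follows.

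The one step that needs care is the reduction in the first paragraph: one must confirm that an exact $2$-separation with both sides of size at least $2$ genuinely forces a $2$-sum decomposition of the stated shape---in particular that the basepoint $p$ can be chosen to be neither a loop nor a coloop of either part, so that $M_1\oplus_2 M_2$ is even defined---and that the resulting $M_1,M_2$ are the pieces whose basepoint circuits restrict correctly to the sets $C_i\cap X$; everything after that is formal. One can also avoid the $2$-sum machinery and argue directly by strong circuit elimination: from a putative counterexample with $\lvert C_2\cap X\rvert$ minimal, eliminate $C_1$ and $C_2$ at a common element of $X$ in favour of an element of $(C_2\cap X)\setminus C_1$; the circuit so obtained is either properly contained in $C_2$---impossible---or a crossing circuit with strictly smaller $X$-part, which one then plays off against $C_1$. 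The bookkeeping in this direct argument, in particular the case where the new circuit's $X$-part is incomparable to $C_1\cap X$, is precisely what the passage to a $2$-sum sidesteps, so I would present the $2$-sum proof.
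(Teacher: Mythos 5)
This lemma is imported verbatim from Oxley (Lemma 8.3.2 there); the paper states it without proof, so there is no in-paper argument to compare against and your proposal must stand on its own. The manipulations in your second paragraph are formally correct, but the reduction in your first paragraph --- the step you yourself single out as ``the one that needs care'' --- is circular. The ``standard fact'' that an exact $2$-separation $(X,E\setminus X)$ with both sides of size at least $2$ forces a decomposition $M=M_1\oplus_2 M_2$ with $E(M_1)=X\cup\{p\}$ is established, in the very section of Oxley you cite, by \emph{defining} the circuits of $M_1$ to be $\mathcal{C}(M|X)\cup\{(C\cap X)\cup\{p\}:C\in\mathcal{C}(M),\ C\cap X\neq\emptyset\neq C\cap(E\setminus X)\}$ and then verifying the circuit axioms for this family: the axiom that no circuit properly contains another is precisely the present lemma, and the elimination axiom is supplied by Lemma~\ref{lem:circ-build}. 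The present paper exhibits exactly this dependency: in the proof of Lemma~\ref{lem:2sum-weak-maps} it invokes Lemmas~\ref{lem:circ-min} and~\ref{lem:circ-build} in order to certify that the families $\mathcal{C}_1,\mathcal{C}_2$ built in this way are circuit systems of matroids $N_1,N_2$ with $N=N_1\oplus_2 N_2$. So deriving the lemma from the $2$-sum decomposition theorem inverts the logical order; as an independent proof it has a genuine gap, even though the implication you write down is valid once the decomposition theorem is granted as a black box.

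What is required is a direct, first-principles argument. The strong-circuit-elimination scheme you sketch at the end is a reasonable route, but you explicitly decline to carry out its case analysis (in particular the case where the new circuit's $X$-part is incomparable with $C_1\cap X$), and that bookkeeping is not incidental --- it, or an equivalent computation with the rank function and the hypothesis $\lambda_M(X)=1$, is the entire content of the lemma. As written, the proposal defers the substance of the statement to a theorem that is normally proved from it.
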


\begin{lemma}[Lemma 8.3.3 in~\cite{oxley2006matroid}] \label{lem:circ-build}
    Let $M$ be a matroid and $X \subseteq E$ such that $(X,E \setminus X)$ is a $2$-separation of $M$. Let $Y_1$ and $Y_2$ be non-empty subsets of $X$ and $E \setminus X$, respectively. Suppose that $M$ has circuits $C_1$ and $C_2$ with $C_1 \cap X = Y_1$ and $C_2 \cap (E \setminus X) = Y_2$ such that $C_1 \cap (E \setminus X)$ and $C_2 \cap X$ are non-empty. Then $Y_1 \cup Y_2$ is a circuit of $M$.
\end{lemma}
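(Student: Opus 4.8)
The plan is to establish two things: that $Y_1 \cup Y_2$ is dependent, via a short submodularity computation exploiting $\lambda_M(X) \le 1$, and that it is \emph{minimally} dependent, by using Lemma~\ref{lem:circ-min} to pin down how any circuit contained in $Y_1 \cup Y_2$ must distribute across the two sides of the separation. Set $Z_1 := C_1 \cap (E \setminus X)$ and $W_2 := C_2 \cap X$, so that $C_1 = Y_1 \cup Z_1$ and $C_2 = W_2 \cup Y_2$ are disjoint unions and all four of $Y_1, Z_1, W_2, Y_2$ are nonempty. Being proper subsets of circuits, each of these four sets is independent, so $r_M(Y_1) = |Y_1|$ and similarly for the other three, while $r_M(C_1) = |Y_1| + |Z_1| - 1$ and $r_M(C_2) = |W_2| + |Y_2| - 1$. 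Since $Y_1 \subseteq X$ and $Y_2 \subseteq E \setminus X$ are disjoint, $|Y_1 \cup Y_2| = |Y_1| + |Y_2|$; so it suffices to prove $r_M(Y_1 \cup Y_2) \le |Y_1| + |Y_2| - 1$ together with the fact that every proper subset of $Y_1 \cup Y_2$ is independent.

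For the rank bound, I would first record two consequences of submodularity of $r_M$. Applying it to $C_2$ and $X$, whose union is $X \cup Y_2$ and whose intersection is $W_2$, gives $r_M(X \cup Y_2) \le r_M(C_2) + r_M(X) - r_M(W_2) = r_M(X) + |Y_2| - 1$; symmetrically, applying it to $C_1$ and $E \setminus X$ gives $r_M(Y_1 \cup (E \setminus X)) \le r_M(E \setminus X) + |Y_1| - 1$. Next, apply submodularity to the pair $X \cup Y_2$ and $Y_1 \cup (E \setminus X)$, whose union is $E$ and whose intersection is exactly $Y_1 \cup Y_2$:
\[
    r_M(E) + r_M(Y_1 \cup Y_2) \le r_M(X \cup Y_2) + r_M\big(Y_1 \cup (E \setminus X)\big).
\]
Substituting the two bounds above, together with $r_M(X) + r_M(E \setminus X) = r_M(E) + \lambda_M(X) \le r_M(E) + 1$ (valid because $(X, E \setminus X)$ is a $2$-separation), the $r_M(E)$ terms cancel and we are left with $r_M(Y_1 \cup Y_2) \le |Y_1| + |Y_2| - 1$. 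In particular $Y_1 \cup Y_2$ is dependent, so it contains a circuit $C_3$.

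It remains to show $C_3 = Y_1 \cup Y_2$. First, $C_3$ meets both $X$ and $E \setminus X$: if $C_3 \subseteq E \setminus X$ then $C_3 \subseteq Y_2$, contradicting the independence of $Y_2$, and symmetrically $C_3 \not\subseteq X$. So $C_1$ and $C_3$ are circuits each meeting both sides of the $2$-separation, and since $C_3 \cap X \subseteq Y_1 = C_1 \cap X$ while Lemma~\ref{lem:circ-min} prevents $C_3 \cap X$ from being a \emph{proper} subset of $C_1 \cap X$, we get $C_3 \cap X = Y_1$. The identical argument applied to $C_2$ and $C_3$ relative to the $2$-separation $(E \setminus X, X)$, using $C_3 \cap (E \setminus X) \subseteq Y_2 = C_2 \cap (E \setminus X)$, gives $C_3 \cap (E \setminus X) = Y_2$. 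Hence $C_3 = (C_3 \cap X) \cup (C_3 \cap (E \setminus X)) = Y_1 \cup Y_2$, so $Y_1 \cup Y_2$ is a circuit.

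The only step requiring real insight is the decisive submodularity pairing: choosing $X \cup Y_2$ and $Y_1 \cup (E \setminus X)$ precisely so that their meet is $Y_1 \cup Y_2$ and their join is $E$, which is exactly what lets the $2$-separation inequality $\lambda_M(X) \le 1$ drive the bound. Everything else is routine rank arithmetic for circuits plus repeated appeals to Lemma~\ref{lem:circ-min}. Conceptually the statement says that, across the $2$-sum $M_1 \oplus_2 M_2$ induced by this separation with basepoint $p$, the circuit $Y_1 \cup \{p\}$ of $M_1$ and the circuit $Y_2 \cup \{p\}$ of $M_2$ amalgamate to the circuit $Y_1 \cup Y_2$ of $M$; the argument above proves this without presupposing the $2$-sum decomposition.
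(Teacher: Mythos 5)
Your proposal is correct. Note that the paper does not prove this statement at all: it is imported verbatim as Lemma 8.3.3 of Oxley's \emph{Matroid Theory}, so there is no in-paper argument to compare against. Your two-step argument is a sound, self-contained derivation: the submodularity chain (first against $C_2$ and $X$, then against $C_1$ and $E\setminus X$, then pairing $X\cup Y_2$ with $Y_1\cup(E\setminus X)$ so that the meet is exactly $Y_1\cup Y_2$ and the join is $E$) correctly converts $\lambda_M(X)\le 1$ into $r_M(Y_1\cup Y_2)\le |Y_1|+|Y_2|-1$, and the minimality step is handled cleanly by two applications of Lemma~\ref{lem:circ-min} (the second legitimately invoked for the symmetric $2$-separation $(E\setminus X, X)$), forcing any circuit $C_3\subseteq Y_1\cup Y_2$ that meets both sides to satisfy $C_3\cap X=Y_1$ and $C_3\cap(E\setminus X)=Y_2$. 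One cosmetic remark: you announce that you will prove every proper subset of $Y_1\cup Y_2$ is independent, but what you actually (and validly) prove is that the circuit contained in $Y_1\cup Y_2$ must equal all of it; the executed argument is complete even though it does not follow the announced plan literally. This is essentially the standard textbook route, and it correctly matches the $2$-sum intuition you describe at the end.
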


\begin{lemma} \label{lem:2sum-weak-maps}
    Let $M$ and $N$ be matroids on subsets of a totally ordered set $Y = \{i_1 < i_2 < \cdots i_n\}$ such that $\mathbb{1}: M \to N$ is a rank-preserving weak map and $\pi^:_M = \pi^:_N$. Suppose that $M = M_1 \oplus_2 M_2$ such that
    \begin{itemize}
        \item[(i)] $\lvert E(M_1) \rvert \geq 3$, $\lvert E(M_2) \rvert \geq 3$, and $E(M_1) \cap E(M_2) = \{i_e\}$;
        \item[(ii)] $E(M_1)$ and $E(M_2)$ are non-crossing subsets of $Y$; and
        \item[(iii)] $(E(M_1) \setminus E(M_2),E(M_2) \setminus E(M_1))$ is not a $1$-separation of $M$ or $N$.
    \end{itemize}
    Then there exist matroids $N_1$ and $N_2$ such that $\pi^:_{N_1} = \pi^:_{M_1}$, $\pi^:_{N_2} = \pi^:_{M_2}$, and $N = N_1 \oplus_2 N_2$.
\end{lemma}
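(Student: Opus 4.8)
The goal is to "lift" the $2$-sum decomposition $M = M_1 \oplus_2 M_2$ through the rank-preserving weak map $\mathbb{1}: M \to N$ to get a $2$-sum decomposition $N = N_1 \oplus_2 N_2$ with matching decorated permutations. The plan is to define $N_1$ and $N_2$ directly, as restrictions of $N$, and then verify three things: (1) that they genuinely are the legs of a $2$-sum, (2) that $N = N_1 \oplus_2 N_2$, and (3) that $\pi^:_{N_i} = \pi^:_{M_i}$. First I would set $X := E(M_1) \setminus \{i_e\} = E(M_1) \setminus E(M_2)$ and $Y := E(M_2) \setminus \{i_e\}$, so that $(X, Y)$ is a partition of $E(M) = E(N)$. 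By hypothesis (iii) it is not a $1$-separation of $M$, and by Lemma~\ref{lem:weak-k-separate} applied to $\mathbb{1}: M \to N$, since $(X,Y)$ is a $2$-separation of $M$ (being the separation induced by the $2$-sum, after hypothesis (iii) rules out the degenerate $1$-separation case), it is also a $2$-separation of $N$ — actually one must be slightly careful: a $2$-sum gives $\lambda_M(X) = 1$, so $(X,Y)$ is an "exact" $2$-separation of $M$; Lemma~\ref{lem:weak-k-separate} then gives $\lambda_N(X) \le 1$, and hypothesis (iii) (for $N$) rules out $\lambda_N(X) = 0$, so $(X,Y)$ is an exact $2$-separation of $N$ as well. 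This exactness is what will let us reconstruct the $2$-sum.

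**Constructing the pieces.** With $(X,Y)$ an exact $2$-separation of $N$, I would invoke the standard fact (the converse direction of the $2$-sum characterization, e.g.\ Oxley Prop.~8.3.1, or reconstruct it from Lemmas~\ref{lem:circ-min} and~\ref{lem:circ-build}) that $N$ can be written as a $2$-sum $N = N_1 \oplus_2 N_2$ along a new basepoint, where $N_1$ lives on $X \cup \{p\}$ and $N_2$ on $Y \cup \{p\}$ for a fresh element $p$; concretely $N_1 = N | (X \cup \{p\})$ where the circuits of $N_1$ are $\mathcal{C}(N|X) \cup \{(C \cap X) \cup \{p\} : C \in \mathcal{C}(N), C \cap X \ne \emptyset \ne C \cap Y\}$, and symmetrically for $N_2$. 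Relabel the basepoint $p$ as $i_e$ in both, so that $E(N_1) = E(M_1)$ and $E(N_2) = E(M_2)$. Lemmas~\ref{lem:circ-min} and~\ref{lem:circ-build} are exactly the tools needed to check that this circuit set for $N_1$ really does define a matroid and that the $2$-sum reassembles to $N$ (Lemma~\ref{lem:circ-build} shows every "crossing" circuit of $N$ is obtained by gluing a circuit of $N_1$ through $i_e$ to one of $N_2$ through $i_e$; Lemma~\ref{lem:circ-min} controls minimality so the glued sets are genuinely circuits and nothing spurious appears).

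**Matching decorated permutations.** This is the step I expect to be the main obstacle, and the reason the hypothesis $\pi^:_M = \pi^:_N$ is present. I would argue $\pi^:_{N_1} = \pi^:_{M_1}$ as follows. From $M = M_1 \oplus_2 M_2$, Lemma~\ref{dec-perm-2sum} expresses $\pi^:_M$ in terms of $\pi^:_{M_1}$ and $\pi^:_{M_2}$ (splitting into the loop/coloop case and the generic case based on the role of $i_e$ in $M_1, M_2$); symmetrically, $N = N_1 \oplus_2 N_2$ gives $\pi^:_N$ in terms of $\pi^:_{N_1}, \pi^:_{N_2}$. Since $\pi^:_M = \pi^:_N$ and the amalgamation formula of Lemma~\ref{dec-perm-2sum} is "invertible" — on elements of $X = E(M_1) \setminus E(M_2)$ the value $\pi^:_M(i_j)$ is either $\pi^:_{M_1}(i_j)$ directly (when it lands in $E(M_1)$) or else it equals $i_e$ in $M_1$ and the formula reads off $\pi^:_{M_2}(i_e)$, and one can detect which case holds and recover $\pi^:_{M_1}(i_j)$ from the combinatorics of the cyclic orders — the equality $\pi^:_M = \pi^:_N$ forces $\pi^:_{M_1}$ and $\pi^:_{N_1}$ to agree off $i_e$, and then agreement at $i_e$ itself follows from tracking where the "return arc" through $i_e$ goes. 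The one genuinely delicate point is the loop/coloop dichotomy for $i_e$: I must check that $i_e$ is a loop (resp.\ coloop, resp.\ neither) of $M_1$ if and only if it is for $N_1$. For this I would use that $\mathbb{1}: M_1 \to N_1$ is itself a rank-preserving weak map (which follows because restriction and the $2$-sum construction are compatible with weak maps, using exactness of the $2$-separation to get rank-preservation), so $i_e$ a loop of $N_1$ forces it a loop of $M_1$; the reverse and the coloop case are handled by the same argument applied to duals, via Corollary~\ref{dec-perm-dual} and the fact that $(\,\cdot\,)^*$ interchanges the two legs' loop/coloop status in a controlled way. Assembling: once $\pi^:_{N_1} = \pi^:_{M_1}$ and $\pi^:_{N_2} = \pi^:_{M_2}$ are established and $N = N_1 \oplus_2 N_2$ is verified, the lemma is proved.
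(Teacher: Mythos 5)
Your proposal follows essentially the same route as the paper's proof: transfer the exact $2$-separation from $M$ to $N$ via Lemma~\ref{lem:weak-k-separate} (with hypothesis (iii) ruling out degeneration to a $1$-separation), reconstruct $N_1$ and $N_2$ from the circuits of $N$ using Lemmas~\ref{lem:circ-min} and~\ref{lem:circ-build}, and match the decorated permutations by combining $\pi^:_M = \pi^:_N$ with the amalgamation formula of Lemma~\ref{dec-perm-2sum}. The only cosmetic difference is that you handle the loop/coloop status of $i_e$ in the legs by a weak-map-plus-duality argument, whereas the paper reads it off directly from hypothesis (iii); both are fine.
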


\begin{proof}
    Let $X = E(M_1) \setminus E(M_2)$ and $E = (E(M_1) \cup E(M_2)) \setminus \{i_e\}$. By (i), $\min \{\lvert X \rvert, \lvert E \setminus X \rvert \} \geq 2$. Furthermore,
    \begin{align*}
        \lambda_M(X) &= r_M(E(M_1) \setminus E(M_2)) + r_M(E(M_2) \setminus E(M_1)) - r_M(M)\\
        &= r_{M_1}(M_1) + r_{M_2}(M_2) - \big(r_{M_1}(M_1) + r_{M_2}(M_2) - 1\big)\\
        &= 1 < 2.
    \end{align*}
    Therefore, $(X,E \setminus X)$ is a $2$-separation of $M$. By Lemma~\ref{lem:weak-k-separate}, $(X,E \setminus X)$ is a $2$-separation of $N$. Let
    \begin{equation*}
        \mathcal{C}_1 = \mathcal{C}(N|X) \cup \{ (C \cap X) \cup i_e : C \in \mathcal{C}(M), C \cap X \neq \emptyset, C \cap (E \setminus X) \neq \emptyset \}
    \end{equation*}
    and
    \begin{align*}
        \mathcal{C}_2 = &\mathcal{C}(N|(E \setminus X)) \cup\\
        &\{(C \cap (E \setminus X)) \cup i_e: C \in \mathcal{C}(M), C \cap X \neq \emptyset, C \cap (E \setminus X) \neq \emptyset\}.
    \end{align*}

    By Lemmas~\ref{lem:circ-min} and~\ref{lem:circ-build}, $\mathcal{C}_1$ and $\mathcal{C}_2$ satisfy the circuit axioms for a matroid. Hence, $\mathcal{C}_1$ defines a matroid on $X \cup i_e$ that we denote by $N_1$ and $\mathcal{C}_2$ defines a matroid on $(E \setminus X) \cup i_e$ that we denote by $N_2$. It follows that $E(M_1) = E(N_1), E(M_2) = E(N_2)$, and $N = N_1 \oplus_2 N_2$.

    It remains to show that $\pi^:_{N_1} = \pi^:_{M_1}$ and $\pi^:_{N_2} = \pi^:_{M_2}$. By (ii), $E(M_1) = E(N_1)$ and $E(M_1) = E(N_2)$ are non-crossing subsets of $Y$. By (iii), $(E(M_1) \setminus E(M_2),E(M_2) \setminus E(M_1))$ is not a $1$-separator of $M$ or $N$, hence $i_e$ is neither a loop nor a coloop in $M$ or $N$. Therefore, by Lemma~\ref{dec-perm-2sum}, $\pi^:_{N_1} = \pi^:_{M_1}$ and $\pi^:_{N_2} = \pi^:_{M_2}$.
\end{proof}

It immediately follows that for an ordered matroid $M$ with positroid envelope $P$, a $2$-sum decomposition $P = P_1 \oplus_2 P_2$ into non-crossing positroids induces a $2$-sum decomposition $M = M_1 \oplus_2 M_2$ such that $\pi^:_{M_1} = \pi^:_{P_1}$ and $\pi^:_{M_2} = \pi^:_{P_2}$. We use the following lemma when $2$-sum decomposing positroid envelope classes.

\begin{lemma} \label{lem:not-1-sep}
    Let $P$ be a positroid on a subset of a totally ordered set $X$. Suppose $P = P_1 \oplus_2 P_2$, such that $P_1$ and $P_2$ are positroids on non-crossing subsets of $X$. Then, for any matroid $M \in \Omega_P$, $(E(P_1) \setminus E(P_2), E(P_2) \setminus E(P_1))$ is not a $1$-separation of $M$.
\end{lemma}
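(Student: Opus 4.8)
The plan is to argue by contradiction using decorated permutations. The crux is that $M\in\Omega_P$ forces $\pi^:_M=\pi^:_P$, whereas $\pi^:_P$ ``mixes'' the two sides of the $2$-sum, so it cannot be the disjoint union of a decorated permutation on $E(P_1)\setminus E(P_2)$ with one on $E(P_2)\setminus E(P_1)$ — which is exactly what a $1$-separation of $M$ along that partition would force. To set up, since $P=P_1\oplus_2 P_2$ is a genuine $2$-sum, write $E(P_1)\cap E(P_2)=\{i_e\}$; condition (iii) in the definition of the $2$-sum guarantees that $i_e$ is neither a loop nor a coloop of $P_1$ or of $P_2$ (equivalently, $(\{i_e\},E(P_j)\setminus\{i_e\})$ is not a $1$-separation of $P_j$). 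Put $Y:=E(P_1)\setminus E(P_2)=E(P_1)\setminus\{i_e\}$; since $E(M)=E(P)=(E(P_1)\cup E(P_2))\setminus\{i_e\}$, one checks that $E(M)\setminus Y=E(P_2)\setminus E(P_1)=E(P_2)\setminus\{i_e\}$.

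Next I would exhibit a single element of $Y$ witnessing the mixing. As $\pi^:_{P_1}$ is a bijection of $E(P_1)$ (identifying a decorated fixed point $\overline{x}$ or $\underline{x}$ with $x$), there is a unique $i_a$ with $\pi^:_{P_1}(i_a)=i_e$; it cannot be $i_e$ itself, for otherwise $i_e$ would be a fixed point of $\pi^:_{P_1}$, hence a loop or coloop of $P_1$. Thus $i_a\in E(P_1)\setminus\{i_e\}=Y$. Since $E(P_1)$ and $E(P_2)$ are non-crossing and $i_e$ is neither a loop nor a coloop, the second (non-loop, non-coloop) case of Lemma~\ref{dec-perm-2sum} applies and gives $\pi^:_P(i_a)=\pi^:_{P_2}(i_e)$. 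Because $i_e$ is not a fixed point of $\pi^:_{P_2}$, the value $\pi^:_{P_2}(i_e)$ is an element of $E(P_2)$ distinct from $i_e$, so it lies in $E(P_2)\setminus\{i_e\}=E(M)\setminus Y$.

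Now I would close the argument. Since $M\in\Omega_P$ we have $\mathcal{J}(M)=\mathcal{J}(P)$, and since the decorated permutation of an ordered matroid is recovered from its Grassmann necklace (the commuting triangle of Figure~\ref{fig:commute-triangle}), $\pi^:_M=\pi^:_P$; in particular the underlying value of $\pi^:_M(i_a)$ lies in $E(M)\setminus Y$. Suppose, for contradiction, that $(Y,E(M)\setminus Y)$ is a $1$-separation of $M$. Then $\lambda_M(Y)=0$, i.e.\ $r_M(Y)+r_M(E(M)\setminus Y)=r_M(M)$, so $Y$ is a union of connected components of $M$ and $M=(M|Y)\oplus(M|(E(M)\setminus Y))$. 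By Remark~\ref{rmk:dec-perm-direct-sum}, $\pi^:_M=\pi^:_{M|Y}\sqcup\pi^:_{M|(E(M)\setminus Y)}$, hence $\pi^:_M(i_a)=\pi^:_{M|Y}(i_a)$ has underlying value in $Y$ — contradicting the previous sentence, since $Y\cap(E(M)\setminus Y)=\emptyset$. Therefore $(Y,E(M)\setminus Y)$ is not a $1$-separation of $M$.

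I do not expect a genuine obstacle here; the points needing care are mostly bookkeeping: (a) correctly reading off from the $2$-sum hypotheses that $i_e$ is neither a loop nor a coloop of $P_1$ or $P_2$, so that the correct (second) case of Lemma~\ref{dec-perm-2sum} is the one that applies; (b) tracking decorated fixed points, so that comparing the ``underlying values'' of $\pi^:_M(i_a)$ computed two different ways is legitimate; and (c) invoking the standard matroid fact that $\lambda_M(Y)=0$ yields the direct-sum decomposition of $M$ along $(Y,E(M)\setminus Y)$, which is what lets Remark~\ref{rmk:dec-perm-direct-sum} be applied.
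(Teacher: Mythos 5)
Your proof is correct and follows essentially the same route as the paper's: assume the $1$-separation, decompose $M$ as a direct sum, and derive a contradiction by comparing $\pi^:_{M_1}\sqcup\pi^:_{M_2}$ with $\pi^:_{P_1\oplus_2 P_2}$ via Remark~\ref{rmk:dec-perm-direct-sum} and Lemma~\ref{dec-perm-2sum}. The only difference is that you explicitly exhibit the witness $i_a$ with $\pi^:_{P_1}(i_a)=i_e$ to justify the inequality of decorated permutations, a detail the paper's proof leaves implicit.
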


\begin{proof}
    Let $M_1 = M|(E(P_1) \setminus E(P_2))$ and $M_2 = M|(E(P_2) \setminus E(P_1))$. Suppose by way of contradiction that there exists $M \in \Omega_P$ such that $(E(P_1) \setminus E(P_2), E(P_2) \setminus E(P_1))$ is a $1$-separation of $M$. Then, $M = M_1 \oplus M_2$. By Remark~\ref{rmk:dec-perm-direct-sum} and Lemma~\ref{dec-perm-2sum}, we have
    \begin{equation*}
        \pi^:_M = \pi^:_{M_1 \oplus M_2} = \pi^:_{M_1} \sqcup \pi^:_{M_2} \neq \pi^:_{P_1 \oplus_2 P_2} = \pi^:_P,
    \end{equation*}
    which contradicts $\pi^:_M = \pi^:_P$. Therefore, $(E(P_1) \setminus E(P_2), E(P_2) \setminus E(P_1))$ is not a $1$-separation of $M$.
\end{proof}

We now have all the tools we need to define \emph{$2$-sums of positroid envelope classes} and present $2$-sum decompositions of positroid envelope classes, which are analogues of $2$-sums of positroids and $2$-sum decompositions of positroids respectively.

\begin{defi}
    Let $P$ and $Q$ be positroids on non-crossing subsets of a totally ordered set $X$, such that $|E(P)| \geq 2, |E(Q)| \geq 2$, and $E(P) \cap E(Q) = \{e\}$. Then we define the \emph{$2$-sum of positroid envelope classes} $\Omega_P$ and $\Omega_Q$ as
    \begin{equation*}
        \Omega_P \oplus_2 \Omega_Q := \{M \oplus_2 N : M \in \Omega_P, N \in \Omega_Q\}. 
    \end{equation*}
\end{defi}

\begin{prop} \label{prop:2sum-class-decomp}
    Let $P$ and $Q$ be $2$-connected positroids on non-crossing subsets of a totally ordered set $X$, such that $\lvert E(P) \rvert \geq 3$, $\lvert E(Q) \rvert \geq 3$, and $E(P) \cap E(Q) = \{e\}$. Then
    \begin{equation*}
        \Omega_{P \oplus_2 Q} = \Omega_P \oplus_2 \Omega_Q.
    \end{equation*}
\end{prop}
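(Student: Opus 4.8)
The plan is to prove the two inclusions $\Omega_P \oplus_2 \Omega_Q \subseteq \Omega_{P \oplus_2 Q}$ and $\Omega_{P \oplus_2 Q} \subseteq \Omega_P \oplus_2 \Omega_Q$ separately, in analogy with the direct-sum case (Proposition~\ref{prop:direct-sum-class-decomp}). Throughout I will use that, for a positroid $R$, an ordered matroid $L$ lies in $\Omega_R$ if and only if $\pi^:_L = \pi^:_R$: this is immediate from the commutativity of the diagram in Figure~\ref{fig:commute-triangle}, as the bijection between decorated permutations and Grassmann necklaces turns the condition $\mathcal{J}(L) = \mathcal{J}(R)$ into $\pi^:_L = \pi^:_R$. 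In particular, every member of $\Omega_R$ has ground-set $E(R)$, and it has the same loops and coloops as $R$, the latter because an element $x$ is a loop (respectively, coloop) of a matroid exactly when its image under the associated decorated permutation is $\underline{x}$ (respectively, $\overline{x}$). I will also use from the start that $P \oplus_2 Q$ is itself a positroid, so that $\Omega_{P \oplus_2 Q}$ is a genuine envelope class; this follows from Theorem~\ref{thm:pos-canon-tree-iff} by gluing canonical tree decompositions of $P$ and of $Q$ along the common element $e$. Finally, since $P$ and $Q$ are $2$-connected with at least three elements, $e$ is neither a loop nor a coloop of $P$ or of $Q$.

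For the inclusion $\Omega_P \oplus_2 \Omega_Q \subseteq \Omega_{P \oplus_2 Q}$, fix $M \in \Omega_P$ and $N \in \Omega_Q$. Then $E(M) = E(P)$ and $E(N) = E(Q)$ are non-crossing subsets of $X$ with $E(M) \cap E(N) = \{e\}$, and $e$ is neither a loop nor a coloop of $M$ or of $N$ (since it is neither for $P$ or $Q$), so $M \oplus_2 N$ is well-defined. Hence Lemma~\ref{dec-perm-2sum}, in the branch where $e$ is neither a loop nor a coloop, applies both to the pair $(M, N)$ and to the pair $(P, Q)$. The two resulting piecewise descriptions of $\pi^:_{M \oplus_2 N}$ and of $\pi^:_{P \oplus_2 Q}$ are assembled from the same ingredients --- the ground-sets $E(P)$ and $E(Q)$, the element $e$, and the decorated permutations, which coincide because $\pi^:_M = \pi^:_P$ and $\pi^:_N = \pi^:_Q$ --- so $\pi^:_{M \oplus_2 N} = \pi^:_{P \oplus_2 Q}$, and therefore $M \oplus_2 N \in \Omega_{P \oplus_2 Q}$.

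For the reverse inclusion, fix $M \in \Omega_{P \oplus_2 Q}$, so that $\mathbb{1} : P \oplus_2 Q \to M$ is a rank-preserving weak map and $\pi^:_{P \oplus_2 Q} = \pi^:_M$. I will apply Lemma~\ref{lem:2sum-weak-maps} to this weak map, with the trivial decomposition $P \oplus_2 Q = P \oplus_2 Q$ playing the role of $M_1 \oplus_2 M_2$ (so that $P$ and $Q$ take the roles of $M_1$ and $M_2$). Hypotheses (i) and (ii) of that lemma hold by the assumptions on $P$ and $Q$, and for (iii): the separation $(E(P) \setminus E(Q), E(Q) \setminus E(P))$ has connectivity $\lambda_{P \oplus_2 Q}(E(P) \setminus E(Q)) = r_P(P) + r_Q(Q) - (r_P(P) + r_Q(Q) - 1) = 1$, exactly as computed in the proof of Lemma~\ref{lem:2sum-weak-maps}, so it is not a $1$-separation of $P \oplus_2 Q$, and by Lemma~\ref{lem:not-1-sep} it is not a $1$-separation of $M$ either. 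Lemma~\ref{lem:2sum-weak-maps} then supplies matroids $M_1$ and $M_2$ with $\pi^:_{M_1} = \pi^:_P$, $\pi^:_{M_2} = \pi^:_Q$, and $M = M_1 \oplus_2 M_2$; alternatively, this is exactly the observation recorded immediately after Lemma~\ref{lem:2sum-weak-maps}. By the first paragraph, $M_1 \in \Omega_P$ and $M_2 \in \Omega_Q$, so $M = M_1 \oplus_2 M_2 \in \Omega_P \oplus_2 \Omega_Q$, which completes the proof.

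Because the substantive work is already carried by Lemmas~\ref{dec-perm-2sum}, \ref{lem:2sum-weak-maps}, and~\ref{lem:not-1-sep}, the argument is essentially a matter of verifying hypotheses; were Lemma~\ref{lem:2sum-weak-maps} not available, the second inclusion would be the real challenge. The point that needs the most care here is the repeated use of the fact that $e$ is never a loop or coloop of any matroid that arises: this is what keeps all the $2$-sums well-defined and selects the correct branch of Lemma~\ref{dec-perm-2sum}, and it depends on the hypothesis that $P$ and $Q$ are $2$-connected with at least three elements together with the invariance of loops and coloops across an envelope class. The only genuinely separate ingredient is the assertion that $P \oplus_2 Q$ is a positroid; beyond confirming that, I do not anticipate a real obstacle.
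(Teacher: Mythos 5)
Your proof is correct and follows essentially the same route as the paper: the forward inclusion via Lemma~\ref{dec-perm-2sum} applied to arbitrary $M \in \Omega_P$, $N \in \Omega_Q$, and the reverse inclusion via Lemma~\ref{lem:not-1-sep} followed by Lemma~\ref{lem:2sum-weak-maps}. The extra care you take (checking that $e$ is never a loop or coloop, and that $P \oplus_2 Q$ is a positroid) is left implicit in the paper but is consistent with its argument.
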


\begin{proof}
    Let $M_1 \in \Omega_P$ and $M_2 \in \Omega_Q$. Then, $\pi^:_{M_1} = \pi^:_P$, $\pi^:_{M_2} = \pi^:_Q$, and $E(M_1)$ and $E(M_2)$ are non-crossing subsets of $X$. By Lemma~\ref{dec-perm-2sum}, $\pi^:_{M_1 \oplus_2 M_2} = \pi^:_{P \oplus_2 Q}$, thus $M_1 \oplus_2 M_2 \in \Omega_{P \oplus_2 Q}$. As $M_1 \in \Omega_P$ and $M_2 \in \Omega_Q$ are arbitrary, it follows that $\Omega_P \oplus_2 \Omega_Q \subseteq \Omega_{P \oplus_2 Q}$.

    Let $M \in \Omega_{P \oplus_2 Q}$. By Lemma~\ref{lem:not-1-sep}, $(E(P) \setminus E(Q),E(Q) \setminus E(P))$ is not a $1$-separation of $M$. Then, by Lemma~\ref{lem:2sum-weak-maps}, there exist ordered matroids $M_1$ and $M_2$, such that $M = M_1 \oplus_2 M_2$, $\pi^:_{M_1} = \pi^:_P$, and $\pi^:_{M_2} = \pi^:_Q$. Thus, $M \in \Omega_P \oplus_2 \Omega_Q$. As $M \in \Omega_{P \oplus_2 Q}$ is arbitrary, it follows that $\Omega_{P \oplus_2 Q} \subseteq \Omega_P \oplus_2 \Omega_Q$. Therefore, $\Omega_{P \oplus_2 Q} = \Omega_P \oplus_2 \Omega_Q$.
\end{proof}

Taking Theorem~\ref{thm:pos-canon-tree-iff} together with repeated applications of Proposition~\ref{prop:2sum-class-decomp}, we can define a canonical tree decomposition for a positroid envelope class $\Omega_P$ corresponding to a $2$-connected positroid $P$.

\begin{defi}
    Let $P$ be a $2$-connected positroid, and let $T$ be a canonical tree decomposition of $P$ as described in Theorem~\ref{thm:pos-canon-tree-iff}. Then, we define a \emph{canonical tree decomposition of $\Omega_P$} as the tree $T_{\Omega}$ obtained from $T$ by relabelling each vertex $Q$ by $\Omega_Q$.
\end{defi}

We use Propositions~\ref{prop:2sum-minor} and \ref{prop:minor-express}, Corollary~\ref{cor:weak-map-1-separate}, and Theorem~\ref{thm:weak-map-minor} to show how rank-preserving weak maps interact with $2$-sums in Lemma~\ref{lem:2sum-weak-map}.

\begin{prop}[Proposition 7.1.21 in \cite{oxley2006matroid}] \label{prop:2sum-minor}
    Both $M$ and $N$ are isomorphic to minors of $M \oplus_2 N$.
\end{prop}

\begin{defi}[Definition 5.6 in \cite{lucas1975weak}]
   Let $N = (M \setminus X)/Y$ be a minor of the matroid $M$. We call $(M \setminus X)/Y$ a \emph{proper expression} of $N$ if given any sequence of one element operations in which the elements of $X$ are deleted and the elements of $Y$ are contracted then at no stage is a loop contracted or a coloop deleted. 
\end{defi}

\begin{prop}[Proposition 5.7 in \cite{lucas1975weak}] \label{prop:minor-express}
    Any minor can be properly expressed.
\end{prop}

\begin{thm}[Theorem 5.8 in \cite{lucas1975weak}] \label{thm:weak-map-minor}
    Let $M$ and $N$ be matroids such that $\mathbb{1}: M \to N$ is a rank-preserving weak map. Let $N'$ be a minor of $N$ properly expressed by $N' = (N \setminus X)/Y$ and let $M' = (M \setminus X)/Y$ be the corresponding minor of $M$. Then $\mathbb{1} : M' \to N'$ is a rank-preserving weak map.
\end{thm}

\begin{cor}[Corollary 5.3 in \cite{lucas1975weak}] \label{cor:weak-map-1-separate}
    Rank-preserving weak maps between matroids on the same finite ground-set preserve $1$-separations.
\end{cor}

\begin{lemma} \label{lem:2sum-weak-map}
    Let $M_1, M_2, N_1,$ and $N_2$ be matroids such that
    \begin{itemize}
        \item[(i)] $|E(M_1)| = |E(N_1)| \geq 2$, $|E(M_2)| = |E(N_2)| \geq 2$,
        \item[(ii)] $E(M_1) \cap E(M_2) = \{e\} = E(N_1) \cap E(N_2)$,
        \item[(iii)] $e$ is not a $1$-separator of $M_1,M_2,N_1,$ or $N_2$.
    \end{itemize}
    Then, $\mathbb{1} : M_1 \oplus_2 M_2 \to N_1 \oplus_2 N_2$ is a rank-preserving weak map if and only if $\mathbb{1} : M_1 \to N_1$ and $\mathbb{1} : M_2 \to N_2$ are both rank-preserving weak maps. 
\end{lemma}

\begin{proof}
    Suppose that $\mathbb{1} : M_1 \to N_1$ and $\mathbb{1} : M_2 \to N_2$ are both rank-preserving weak maps. By construction, $E(M_1 \oplus_2 M_2) = E(N_1 \oplus_2 N_2)$ and 
    \begin{align*}
        r_{M_1 \oplus_2 M_2}(M_1 \oplus_2 M_2) &= r_{M_1}(M_1) + r_{M_2}(M_2) - 1\\
        &= r_{N_1}(N_1) + r_{N_2}(N_2) - 1 = r_{N_1 \oplus_2 N_2}(N_1 \oplus_2 N_2),
    \end{align*}
    so it is sufficient to show that $\mathbb{1} : M_1 \oplus_2 M_2 \to N_1 \oplus_2 N_2$ is a weak map. We will prove the contrapositive, if $\mathbb{1} : M_1 \oplus_2 M_2 \to N_1 \oplus_2 N_2$ is not a weak map, then $\mathbb{1} : M_1 \to N_1$ or $\mathbb{1} : M_2 \to N_2$ is not a weak map. 
    
    Suppose that $\mathbb{1} : M_1 \oplus_2 M_2 \to N_1 \oplus_2 N_2$ is not a weak map. Then, there exists some $I \in \mathcal{I}(N_1 \oplus_2 N_2)$ such that $I \notin \mathcal{I}(M_1 \oplus_2 M_2)$. Therefore, there exists a circuit $C \in \mathcal{C}(M_1 \oplus_2 M_2)$ such that $C \subseteq I$. Recall that
    \begin{align*}
        \mathcal{C}(M_1 \oplus_2 M_2) = \mathcal{C}&(M_1 \setminus e) \cup \mathcal{C}(M_2 \setminus e) \cup\\ \{&(C_1 \cup C_2) \setminus \{e\} : e \in C_1 \in \mathcal{C}(M_1), e \in C_2 \in \mathcal{C}(M_2)\}. 
    \end{align*}
    Suppose further that $C \in \mathcal{C}(M_1 \setminus e)$. Then $C \in \mathcal{I}(N_1)$, but $C \notin \mathcal{I}(M_1)$, so $\mathbb{1} : M_1 \to N_1$ is not a weak map. By symmetry, if $C \in \mathcal{C}(M_2 \setminus e)$, then $\mathbb{1} : M_2 \to N_2$ is not a weak map. Suppose instead that $C \in \{(C_1 \cup C_2) \setminus \{e\} : e \in C_1 \in \mathcal{C}(M_1), e \in C_2 \in \mathcal{C}(M_2)\}$. Then, there exist circuits $C_1 \in \mathcal{C}(M_1)$, $C_2 \in \mathcal{C}(M_2)$ such that $C_1 \cap C_2 = \{e\}$ and $C = (C_1 \cup C_2) \setminus \{e\}$. As $C \in \mathcal{I}(N_1 \oplus_2 N_2)$, it follows that $C_1 \in \mathcal{I}(N_1)$ or $C_2 \in \mathcal{I}(N_2)$. Therefore, $\mathbb{1} : M_1 \to N_1$ or $\mathbb{1} : M_2 \to N_2$ is not a weak map.

    Now suppose that $\mathbb{1} : M_1 \oplus_2 M_2 \to N_1 \oplus_2 N_2$ is a rank-preserving weak map. By Proposition~\ref{prop:2sum-minor} and Proposition~\ref{prop:minor-express}, there exists a properly expressed minor $N' = ((N_1 \oplus_2 N_2) \setminus X)/Y = N_1 \oplus_2 ((N_2 \setminus X)/Y)$ of $N_1 \oplus_2 N_2$ such that $(N_2 \setminus X)/Y \cong U^1_2$ and $N' \cong N_1$, where $X,Y \subset E(N_2)$ and $N'$ is obtained from $N_1$ by relabelling $e$. Then, by Theorem~\ref{thm:weak-map-minor}, there exists a minor $M' = ((M_1 \oplus_2 M_2) \setminus X)/Y = M_1 \oplus_2 ((M_2 \setminus X)/Y)$ such that $\mathbb{1} : M' \to N'$ is a rank-preserving weak map. By Corollary~\ref{cor:weak-map-1-separate}, $e$ is neither a $1$-separator of $M_1$ nor of $(M_2 \setminus X)/Y$, so $(M_2 \setminus X)/Y$ is $2$-connected. The uniform matroid $U^1_2$ is, up to isomorphism, the unique $2$-connected matroid on two elements, so $(M_2 \setminus X)/Y \cong U^1_2$. Thus, $M' = M_1 \oplus_2 ((M_2 \setminus X)/Y) \cong M_1$. As both $M_1$ and $N_1$ are obtained from $M'$ and $N'$ respectively by relabelling $e$, it follows that $\mathbb{1} : M_1 \to N_1$ is a rank-preserving weak map. By symmetry, $\mathbb{1} : M_2 \to N_2$ is a rank-preserving weak map.
\end{proof}

We apply Lemma~\ref{lem:2sum-weak-map} to show in Lemma~\ref{lem:poset-2sum} that if a positroid envelope class decomposes under $2$-sums, then the positroid envelope class is isomorphic as a poset to the product of its positroid envelope class $2$-summands.

\begin{lemma} \label{lem:poset-2sum}
    Let $P$ and $Q$ be $2$-connected positroids on non-crossing subsets of a totally ordered set such that $E(P) \cap E(Q) = \{e\}$, $|E(P)| \geq 2$, and $|E(Q)| \geq 2$. Then, $\Omega_{P \oplus_2 Q}$ and $\Omega_P \times \Omega_Q$ are isomorphic as posets.
\end{lemma}

\begin{proof}
    We define the map $\varphi : \Omega_{P \oplus_2 Q} \to \Omega_P \times \Omega_Q$ given by, for all $M \oplus_2 N \in \Omega_{P \oplus_2 Q},$ $\varphi(M \oplus_2 N) = (M,N)$. This map is a bijection, so it remains to show that $\varphi$ and $\varphi^{-1}$ are order-preserving. Suppose that $M \oplus_2 N, K \oplus_2 L \in \Omega_{P \oplus_2 Q}$ such that $K \oplus_2 L \leq M \oplus_2 N$. Then, $\mathbb{1} : M \oplus_2 N \to K \oplus_2 L$ is a rank-preserving weak map. By Lemma~\ref{lem:2sum-weak-map}, $\mathbb{1} : M \to K$ and $\mathbb{1} : N \to L$ are both rank-preserving weak maps, hence $K \leq M$ and $L \leq N$. Therefore, $\varphi(K \oplus_2 L) = (K,L) \leq (M,N) = \varphi(M \oplus_2 N)$.

    Suppose that $(M,N),(K,L) \in \Omega_P \times \Omega_Q$ such that $(K,L) \leq (M,N)$. Then, $K \leq M$ and $L \leq N$, so $\mathbb{1} : M \to K$ and $\mathbb{1} : N \to L$ are both rank-preserving weak maps. By Lemma~\ref{lem:2sum-weak-map}, $\mathbb{1} : M \oplus_2 N \to K \oplus_2 L$ is a rank-preserving weak map. Therefore, $\varphi^{-1}(K,L) = K \oplus_2 L \leq M \oplus_2 N = \varphi^{-1}(M,N).$ 
\end{proof}

Proposition~\ref{prop:semilattices-direct-sums-2sums} immediately follows from Lemmas~\ref{lem:poset-direct-sum} and \ref{lem:poset-2sum}. Note that the posets $\Omega_P$ and $\Omega_Q$ are both join-semilattices, meet-semilattices, or lattices if and only if $\Omega_P \times \Omega_Q$ is a join-semilattice, meet-semilattice, or lattice, respectively. 

\begin{prop} \label{prop:semilattices-direct-sums-2sums}
    Let $P$ and $Q$ be positroids on non-crossing subsets  of a totally ordered set.
    \begin{enumerate}
        \item Suppose that $E(P)$ and $E(Q)$ are disjoint, then $\Omega_P$ and $\Omega_Q$ are both join-semilattices/meet-semilattices/lattices if and only if $\Omega_{P \oplus Q}$ is a join-semilattice/meet-semilattice/lattice.
        \item Suppose that $|E(P)| \geq 2, |E(Q)| \geq 2,$ and $E(P) \cap E(Q) = \{e\}$, then $\Omega_P$ and $\Omega_Q$ are both join-semilattices/meet-semilattices/lattices if and only if $\Omega_{P \oplus_2 Q}$ is a join-semilattice/meet-semilattice/lattice.
    \end{enumerate}
\end{prop}

By applying Proposition~\ref{prop:semilattices-direct-sums-2sums}, we characterize in Theorem~\ref{thm:semilattices-rank-corank-2} all positroid envelope classes as join-semilattices whose positroids decompose under direct sums and $2$-sums into positroids of rank or corank at most $2$.

\begin{thm} \label{thm:semilattices-rank-corank-2}
    Let $P$ be a positroid that decomposes under direct sums and $2$-sums into a collection of positroids all of which have rank or corank at most $2$. Then $\Omega_P$ is a join-semilattice.
\end{thm}

\begin{proof}
    By Proposition~\ref{prop:semilattices-direct-sums-2sums}, it is sufficient to show that if $P$ is $2$-connected and has rank or corank at most $2$ then $\Omega_P$ is a join-semilattice. Suppose that $P$ has rank or corank at most $1$, then $P$ is $U^2_4$-free. By Theorem \ref{thm:pos-graphic}, $\Omega_P$ contains a single element and is therefore a join-semilattice.

    Now suppose that $P$ has rank-$2$. By assumption, $P$ is $2$-connected, so $P$ is loopless and therefore every matroid in $\Omega_P$ is loopless. Let $M,N \in \Omega_P$ be distinct ordered matroids, and without loss of generality let $E$ be the ground-set of all matroids in $\Omega_P$. We construct the following set
    \[
        \mathcal{C} = \left\{ {E \choose 2} \notin \mathcal{B}(M) \cup \mathcal{B}(N) \right\} \cup \left\{ {E \choose 3} : {E \choose 3} \not\supset {E \choose 2} \notin \mathcal{B}(M) \cup \mathcal{B}(N) \right\},
    \]
    and we will show that $\mathcal{C}$ satisfies the circuit axioms of a matroid. By construction, no element of $\mathcal{C}$ is a proper subset of any other element of $\mathcal{C}$. Let $C_1,C_2 \in \mathcal{C}$ such that there exists $e \in C_1 \cap C_2$. Suppose that $|C_1| = 2 = |C_2|$, then as both $M$ and $N$ are loopless, we obtain $(C_1 \cup C_2) \setminus \{e\} \in \mathcal{C}(M) \cap \mathcal{C}(N)$ and therefore
    \[
        (C_1 \cup C_2) \setminus \{e\} \in \left\{ {E \choose 2} \notin \mathcal{B}(M) \cup \mathcal{B}(N) \right\} \subseteq \mathcal{C}.
    \]
    Suppose instead that $|C_1| = 3$ or $|C_2| = 3$. Then either
    \[
         (C_1 \cup C_2) \setminus \{e\} \supset C \in \left\{ {E \choose 2} \notin \mathcal{B}(M) \cup \mathcal{B}(N) \right\},
    \]
    or there exists a three element subset $C_3 \subset (C_1 \cup C_2) \setminus \{e\}$ such that
    \[
        C_3 \in \left\{ {E \choose 3} : {E \choose 3} \not\supset {E \choose 2} \notin \mathcal{B}(M) \cup \mathcal{B}(N) \right\},
    \]
    thus $(C_1 \cup C_2) \setminus \{e\} \supseteq C \in \mathcal{C}$. Therefore, $\mathcal{C}$ is the set of circuits of some rank-$2$ matroid $K$ on $E$. The bases of $K$ are given by
    \[
        \mathcal{B}(K) = \left\{ {E \choose 2} \right\} \mathbin{\bigg\backslash} \left\{ {E \choose 2} \notin \mathcal{B}(M) \cup \mathcal{B}(N) \right\} = \mathcal{B}(M) \cup \mathcal{B}(N).
    \]
    It follows that $\mathbb{1} : K \to M$ and $\mathbb{1} : K \to N$ are both rank-preserving weak maps, $M,N \leq K$, and any matroid $L$ such that $M,N \leq L$ has the property that $K \leq L$. Therefore, $K$ is the join of $M$ and $N$ in $\Omega_P$. Furthermore, as $M$ and $N$ were arbitrary, $\Omega_P$ is a join-semilattice.
\end{proof}

\subsection{Positroid varieties} For any field $\mathbb{F}$, the \emph{Grassmannian} $\Gr(r,\mathbb{F}^n)$ is the collection of $r$-dimensional linear subspaces of $\mathbb{F}^n$. A point $V \in \Gr(r,\mathbb{F}^n)$ can be represented by an $r \times n$ full-rank matrix $A$ with entries over $\mathbb{F}$ whose row-span is $V$. As any such representative matrix realizes the same ordered matroid, we can associate to each point $ V \in \Gr(r,\mathbb{F}^n)$ an ordered matroid $M_V$. The \emph{matroid stratum} over a field $\mathbb{F}$ of a rank-$r$ ordered matroid $M$ on $n$ elements is
\[
    S_M(\mathbb{F}) := \{V \in \Gr(r,\mathbb{F}^n) : M_V = M\}.
\]
Observe that $M$ is $\mathbb{F}$-realizable if and only if $S_M(\mathbb{F})$ is non-empty. We say that the matroid stratum associated to a matroid $M$ is binary or ternary if $M$ is binary or ternary respectively.

The totally nonnegative Grassmannian $\Gr^{\geq 0}(r,\mathbb{R}^n)$ consists of the points in the Grassmannian $\Gr(r,\mathbb{R}^n)$ that can be represented by matrices with all nonnegative maximal minors. The \emph{positroid cell} assocaited to a rank-$r$ ordered matroid $M$ on $n$ elements
\[
    S^{\geq 0}_M := S_M(\mathbb{R}) \cap \Gr^{\geq 0}(r,\mathbb{R}^n).
\]
An ordered matroid is a positroid if and only if its corresponding positroid cell is non-empty.

For a field $\mathbb{F}$ and a positrod $P$ of rank-$r$ on $n$ elements, the \emph{open positroid variety} $\Pi^{\circ}_{\mathbb{F}}(P)$ is defined as
\[
    \Pi^{\circ}_{\mathbb{F}}(P) := \{V \in \Gr(r,\mathbb{F}^n) : M_V \in \Omega_P\} = \bigsqcup_{M \in \Omega_P} S_M(\mathbb{F}),
\]
and the \emph{positroid variety} $\Pi_{\mathbb{F}}(P) = \overline{\Pi^{\circ}_{\mathbb{F}}(P)}$. The following result gives an equivalent description of the positroid varieties over $\mathbb{R}$.

\begin{cor}[Corollary 5.12 in~\cite{knutson2013positroid}] \label{cor:pos-variety}
    Let $P$ be a positroid. Then, as sets,
    \begin{equation*}
        \Pi_{\mathbb{R}}(P) = \overline{S_P} = \{V \in \Gr(k,\mathbb{R}^n) : I \notin \mathcal{I}(P) \Rightarrow \Delta_I(V) = 0\}.
    \end{equation*}
\end{cor}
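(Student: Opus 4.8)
The plan is to prove the two displayed equalities by establishing the chain
\[
\overline{S_P} \;\subseteq\; \Pi_{\mathbb{R}}(P) \;\subseteq\; W_P \;\subseteq\; \overline{S_P},
\]
where $W_P := \{V \in \Gr(k,\mathbb{R}^n) : \Delta_I(V) = 0 \text{ whenever } I \notin \mathcal{I}(P)\}$ is the set on the right of the statement. Since only $k$-element subsets $I$ impose conditions, $W_P = \{V : \Delta_B(V) = 0 \text{ for all } B \in \binom{[n]}{k} \setminus \mathcal{B}(P)\}$, which is Zariski closed and hence closed in the Euclidean topology as well; this makes the argument insensitive to which closure is meant in $\Pi_{\mathbb{R}}(P) = \overline{\Pi^{\circ}_{\mathbb{R}}(P)}$ and in $\overline{S_P}$. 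The first inclusion is immediate: $P \in \Omega_P$, so $S_P \subseteq \Pi^{\circ}_{\mathbb{R}}(P) \subseteq \Pi_{\mathbb{R}}(P)$, and $\Pi_{\mathbb{R}}(P)$ is closed. For the second I would use that for every $M \in \Omega_P$ the map $\mathbb{1} : P \to M$ is a rank-preserving weak map, so $\mathcal{I}(M) \subseteq \mathcal{I}(P)$; hence every $V \in S_M(\mathbb{R})$ satisfies $\Delta_I(V) = 0$ for all $I \notin \mathcal{I}(P)$, i.e.\ $S_M(\mathbb{R}) \subseteq W_P$. Taking the union over $M \in \Omega_P$ gives $\Pi^{\circ}_{\mathbb{R}}(P) \subseteq W_P$, and closedness of $W_P$ upgrades this to $\Pi_{\mathbb{R}}(P) \subseteq W_P$.

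The remaining inclusion $W_P \subseteq \overline{S_P}$ is the only substantial one. First I would rewrite $W_P$ as an intersection of cyclically rotated Schubert varieties using the description of positroid bases in Theorem~\ref{thm:pos-intersect}. Writing $\mathcal{J}(P) = (J_{i_1},\ldots,J_{i_n})$, set for each $j$
\[
X_j := \{V \in \Gr(k,\mathbb{R}^n) : \Delta_B(V) = 0 \text{ for every } B \in \binom{[n]}{k} \text{ with } J_{i_j} \not\leq_{i_j} B\}.
\]
A $k$-subset $B$ fails $J_{i_j} \leq_{i_j} B$ for at least one $j$ precisely when $B \notin \mathcal{B}(P)$, by Theorem~\ref{thm:pos-intersect}; hence $W_P = \bigcap_{j=1}^n X_j$.

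Next I would invoke that $\bigcap_{j=1}^n X_j$ is irreducible. This is the geometric heart of the theory of positroid varieties (see~\cite{knutson2013positroid}); it can be obtained by Frobenius splitting, or over $\mathbb{R}$ by extending the plabic-graph parametrization of the totally nonnegative cell $S^{\geq 0}_P$ to a dense open subset of $\bigcap_j X_j$ — here one uses that $S^{\geq 0}_P \neq \emptyset$ since $P$ is a positroid, which also furnishes smooth real points so that the real locus is Zariski dense in its complexification. Granting irreducibility, the generic point of $\bigcap_j X_j$ realizes a well-defined matroid $M_0$ with $\mathcal{B}(M_0) = \{B : \Delta_B \not\equiv 0 \text{ on } \bigcap_j X_j\}$. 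By construction $\Delta_B \equiv 0$ there whenever $B \notin \mathcal{B}(P)$, while for $B \in \mathcal{B}(P)$ the nonempty stratum $S_P \subseteq W_P = \bigcap_j X_j$ contains a point with $\Delta_B \neq 0$; therefore $\mathcal{B}(M_0) = \mathcal{B}(P)$, i.e.\ $M_0 = P$. Consequently $S_P = \{V \in \bigcap_j X_j : M_V = P\}$ is a dense open subset of $\bigcap_j X_j$, so $\bigcap_j X_j = \overline{S_P}$. Combining this with the two easy inclusions closes the chain and yields $\overline{S_P} = \Pi_{\mathbb{R}}(P) = W_P$.

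The step I expect to be the main obstacle is the irreducibility of $\bigcap_{j=1}^n X_j$ (equivalently, of the open positroid variety): all of the matroid-theoretic input is packaged into Theorem~\ref{thm:pos-intersect} and the monotonicity $\mathcal{I}(M) \subseteq \mathcal{I}(P)$ for $M \in \Omega_P$, while the only other delicate point — that passing from $\mathbb{C}$ to $\mathbb{R}$ loses nothing — is absorbed by the presence of the full-dimensional totally nonnegative cell $S^{\geq 0}_P$ inside $W_P$.
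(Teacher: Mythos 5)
This statement is not proved in the paper at all: it is imported verbatim as Corollary~5.12 of \cite{knutson2013positroid}, so there is no internal argument to compare yours against. Taken on its own terms, your proposal gets the two routine inclusions right. The containment $\overline{S_P}\subseteq \Pi_{\mathbb{R}}(P)$ follows from $P\in\Omega_P$; the containment $\Pi_{\mathbb{R}}(P)\subseteq W_P$ follows because $\mathbb{1}\colon P\to M$ is a rank-preserving weak map for every $M\in\Omega_P$, so $\mathcal{B}(M)\subseteq\mathcal{B}(P)$, and $W_P$ is closed; and the rewriting $W_P=\bigcap_{j}X_j$ as an intersection of cyclically shifted Schubert varieties is exactly the right use of Theorem~\ref{thm:pos-intersect}.

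The gap is the one you yourself flag: the inclusion $W_P\subseteq\overline{S_P}$ rests entirely on the irreducibility of $\bigcap_j X_j$, which you justify only by pointing back to \cite{knutson2013positroid} --- the very source of the statement being proved. That irreducibility (established there by identifying positroid varieties with projected Richardson varieties and using Frobenius splitting) \emph{is} the content of the corollary; everything after it in your argument (the generic matroid $M_0$, the verification $\mathcal{B}(M_0)=\mathcal{B}(P)$ using $S_P\neq\emptyset$, hence density of $S_P$) is correct but elementary. So as a self-contained proof the proposal is circular at its key step. A second, smaller issue is the passage from $\mathbb{C}$ to $\mathbb{R}$: your generic-point argument needs irreducibility of the \emph{real} variety (or real-Zariski density of $S_P$ in $W_P$), and the observation that $S^{\geq 0}_P\neq\emptyset$ only gives you density near the totally nonnegative cell, not on all of $W_P$; if the closure in the statement is Euclidean rather than Zariski, more care is needed at singular real points. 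In short: your outline faithfully reproduces how the result is proved in the literature and correctly isolates where the difficulty lives, but it is a reduction to the cited theorem rather than a proof of it.
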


It immediately follows that
\[
    \Pi_{\mathbb{R}}(P) = \bigsqcup_{\mathbb{1} : P \xrightarrow{\text{rp}} M} S_M(\mathbb{R}),
\]
where the disjoint union is taken over all ordered matroids $M$ for which $\mathbb{1} : P \to M$ is a rank-preserving weak map.

\subsection{Binary positroids and their envelope classes}

A matroid is called \emph{series-parallel} if it can be represented by a series-parallel graph. The following result characterizes the $2$-connected series-parallel positroids.

\begin{cor}[Corollary 6.4 in~\cite{speyer2021positive}]
    A $2$-connected positroid is series-parallel if and only if it is $U^2_4$-free.
\end{cor}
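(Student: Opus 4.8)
The statement to prove is: a $2$-connected positroid is series-parallel if and only if it is $U^2_4$-free. The natural plan is to combine the standard matroid-theoretic characterization of series-parallel matroids with what we already know about positroids and their $3$-connected components. Recall the classical fact (Oxley, \emph{Matroid Theory}) that a $2$-connected matroid is series-parallel if and only if it has no minor isomorphic to $U^2_4$ or $M(K_4)$; equivalently, the $3$-connected series-parallel matroids are exactly $U^1_1$, $U^1_2$, $U^0_1$, and $U^2_3$ is already $U^2_4$-free but is not what arises — the point is that a $2$-connected matroid is series-parallel precisely when every $3$-connected component (in the canonical tree decomposition) is a circuit or a cocircuit, i.e.\ has no $3$-connected minor other than $U^1_1$ and $U^1_2$. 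So the crux is to rule out, for a positroid, both $U^2_4$-minors \emph{and} $M(K_4)$-minors, given that the $U^2_4$-minor is forbidden.

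First I would handle the easy direction: if $P$ is series-parallel it is represented by a series-parallel graph, hence is graphic and therefore binary, and binary matroids are $U^2_4$-free since $U^2_4$ is the forbidden minor for binarity (cited in the excerpt, Oxley Theorem 6.5.4). For the converse, suppose $P$ is a $2$-connected positroid that is $U^2_4$-free. By the classical characterization it suffices to show $P$ has no $M(K_4)$-minor. Here is where I would use the positroid structure: take the canonical tree decomposition $T$ of $P$ from Theorem~\ref{thm:pos-canon-tree-iff}, whose vertices are positroids that are $3$-connected, circuits, or cocircuits. A minor of $P$ isomorphic to $M(K_4)$ would have to "live inside" one of the $3$-connected vertex-positroids (circuits and cocircuits have no $M(K_4)$-minor, being $U^{n-1}_n$ or $U^1_n$). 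So it suffices to show that no $3$-connected positroid has an $M(K_4)$-minor, under the hypothesis that $P$, hence each such vertex-positroid, is $U^2_4$-free. I would argue that a $3$-connected $U^2_4$-free positroid must itself be $M(K_4)$-free: indeed $M(K_4)$ is itself $3$-connected and not a whirl, but by the paper's forthcoming structural result (which, since the excerpt is only through this corollary's statement, I should instead derive directly) a $3$-connected binary positroid is... and $M(K_4)$ \emph{is} binary, so this needs care. The clean route is to invoke the tree decomposition plus the fact that $M(K_4)$ is $3$-connected with no proper $3$-connected minors other than smaller uniform pieces, and then show $M(K_4)$ is not a positroid by exhibiting a crossing circuit--cocircuit pair via Theorem~\ref{thm:pos-crossing} — but a \emph{minor} of a positroid need not make this immediate, so instead I use that the $M(K_4)$-minor would appear as a minor of a $3$-connected vertex positroid $Q$ of the tree, and minors of positroids are positroids (Proposition~\ref{pos-dual-minor}), so $M(K_4)$ would itself be a positroid — and then one checks directly, using Theorem~\ref{thm:pos-crossing} with the standard labelling of $E(M(K_4))$ by $\{1,\dots,6\}$, that every cyclic labelling of $K_4$'s six edges produces a triangle (circuit) and a triad (cocircuit) that are disjoint and crossing, a contradiction.

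So the key steps, in order, are: (1) easy direction via graphic $\Rightarrow$ binary $\Rightarrow$ $U^2_4$-free; (2) invoke the classical "$2$-connected series-parallel $\iff$ no $U^2_4$ and no $M(K_4)$ minor"; (3) use the canonical tree decomposition (Theorem~\ref{thm:pos-canon-tree-iff}) to reduce an $M(K_4)$-minor of $P$ to an $M(K_4)$-minor of a $3$-connected vertex-positroid $Q$; (4) since minors of positroids are positroids, conclude $M(K_4)$ itself would be a positroid; (5) derive a contradiction from Theorem~\ref{thm:pos-crossing} by checking that $M(K_4)$ has a disjoint crossing circuit--cocircuit pair under every cyclic order of its ground set. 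The main obstacle I anticipate is step (5): verifying that \emph{no} cyclic ordering of the six edges of $K_4$ avoids a crossing triangle/triad pair. This is a finite check but requires an argument rather than brute force — I would phrase it via the observation that $K_4$'s four triangles and four triads partition into dual pairs (triangle $=$ complement of a triad), so for any cyclic order one can pick a triangle $C$ and the triad $C^*$ disjoint from it (the triad on the three edges not in $C$... adjusting, the triad at the vertex not on $C$), and show that three edges forming a triangle of $K_4$ together with the three edges at the opposite vertex must interleave on any cycle, since the three "triangle" edges pairwise share vertices among a set of three vertices while the three "star" edges each connect those three vertices to the fourth. That structural incompatibility with being non-crossing is what I would make precise.
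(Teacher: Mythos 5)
The paper offers no proof of this corollary at all; it is imported verbatim from Speyer--Williams, so there is nothing internal to compare against. Your overall strategy is nevertheless reasonable and the statement is true: the forward direction (series-parallel $\Rightarrow$ graphic $\Rightarrow$ binary $\Rightarrow$ $U^2_4$-free) is fine, and reducing the converse to the classical fact that a $2$-connected matroid is series-parallel if and only if it has neither a $U^2_4$- nor an $M(K_4)$-minor is the right move. The detour through the canonical tree decomposition is unnecessary, though: Proposition~\ref{pos-dual-minor} already says every minor of a positroid is a positroid in the inherited order, so an $M(K_4)$-minor of $P$ would directly give a positroid ordering of $M(K_4)$, which is what you want to contradict.

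The genuine gap is in your step (5). You assert that a triangle of $K_4$ and the triad at the opposite vertex ``must interleave on any cycle'' because of how their edges share vertices. For a \emph{fixed} pair this is false: ordering the six edges as $ab,bc,ca,ad,bd,cd$ makes the triangle on $\{a,b,c\}$ and the star of $d$ both cyclic intervals, hence non-crossing. The obstruction is global, not local: the disjoint circuit--cocircuit pairs of $M(K_4)$ are exactly the four (triangle, opposite triad) pairs, each of which partitions the six edges, so by Theorem~\ref{thm:pos-crossing} a positroid ordering would force all four vertex stars to be cyclic intervals of length $3$ simultaneously. A cyclic order on six elements has only six such intervals, and two of them intersect in $2$, $1$, or $0$ elements according as their starting positions differ by $\pm 1$, $\pm 2$, or $3$ modulo $6$; since the residues at mutual difference $\pm 2$ split into two classes of size three, no four intervals can pairwise meet in exactly one element, whereas the four vertex stars of $K_4$ do. That counting step (or an equivalent argument) is what your sketch is missing. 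Alternatively, you could bypass the computation entirely: the paper records in the proof of Theorem~\ref{thm:pos-tern} that all positroids are gammoids and all gammoids are $M(K_4)$-free, which rules out the $M(K_4)$-minor immediately.
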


Binary positroids have several equivalent simple characterizations, one of which is the direct sum of series-parallel positroids, summarized in the following theorem. Since binary positroids are regular, they are also ternary. Our main results in Section~\ref{sec:main} build on this result and cover the remaining characterization of the ternary positroids.

\begin{thm} [Theorem 4.4 in~\cite{quail2024positroid}] \label{thm:pos-graphic}
    The following are equivalent for a positroid $P$.
    \begin{itemize}
        \item[(i)] $P$ is binary,
        \item[(ii)] $P$ is graphic,
        \item[(iii)] $P$ is regular,
        \item[(iv)] $P$ is a direct sum of series-parallel matroids,
        \item[(v)] $P$ is the unique matroid contained in $\Omega_P$,
        \item[(vi)] for every field $\mathbb{F}$, $\Pi^{\circ}_{\mathbb{F}}(P) = S_P(\mathbb{F})$,
        \item[(vii)] $\Pi_{\mathbb{R}}(P)$ is equal to the disjoint union of graphic matroid strata,
    \end{itemize} 
\end{thm}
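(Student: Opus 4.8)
The plan is to establish the cycle $(iv)\Rightarrow(ii)\Rightarrow(iii)\Rightarrow(i)\Rightarrow(iv)$, pinning down the four matroid-theoretic conditions, and then to graft $(v)$, $(vi)$ and $(vii)$ onto it. Three of these four implications are essentially free: $(iv)\Rightarrow(ii)$ because a series-parallel matroid is graphic by definition and a direct sum of graphic matroids is represented by the disjoint union of representing graphs; $(ii)\Rightarrow(iii)$ because graphic matroids are regular; and $(iii)\Rightarrow(i)$ because a regular matroid is $\mathbb{F}$-linear over every field, in particular over $\mathbb{F}_2$. The one with content is $(i)\Rightarrow(iv)$: if the positroid $P$ is binary, Theorem~\ref{thm:pos-direct-sum} gives $P=P_1\oplus\cdots\oplus P_t$ with each $P_i$ a connected, hence $2$-connected, positroid; each $P_i$ is a restriction of $P$ and so is binary, hence $U^2_4$-free, hence series-parallel by Corollary~6.4 in~\cite{speyer2021positive}; thus $P$ is a direct sum of series-parallel matroids.

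For $(iv)\Rightarrow(v)$ I would use the envelope-class decompositions. By Proposition~\ref{prop:direct-sum-class-decomp}, $\Omega_P=\Omega_{P_1}\oplus\cdots\oplus\Omega_{P_t}$, and since the $P_i$ are connected a direct sum of matroids determines its summands, so $\Omega_P=\{P\}$ as soon as each $\Omega_{P_i}=\{P_i\}$. Each $2$-connected series-parallel $P_i$ has, by Theorem~\ref{thm:canon-tree}, a canonical tree decomposition every vertex of which is a circuit or a cocircuit --- this is exactly what it means to be obtained from a single element by series and parallel extensions --- and by Theorem~\ref{thm:pos-canon-tree-iff} these vertices are positroids on pairwise non-crossing sets across each cut, so Proposition~\ref{prop:2sum-class-decomp} applies along the tree and reduces $\Omega_{P_i}=\{P_i\}$ to showing that every circuit $U^{k-1}_k$ and every cocircuit $U^1_k$ has a one-element envelope class. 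For $U^{k-1}_k$ this is because the only loopless connected matroid of rank $k-1$ on $k$ elements is $U^{k-1}_k$ itself, whereas any rank-$(k-1)$ matroid on $k$ elements with a loop or coloop has a Grassmann necklace different from that of $U^{k-1}_k$; the cocircuit case follows by duality using Corollary~\ref{dec-perm-dual}. The implication $(v)\Rightarrow(vi)$ is then immediate, since $\Omega_P=\{P\}$ gives $\Pi^{\circ}_{\mathbb{F}}(P)=\bigsqcup_{M\in\Omega_P}S_M(\mathbb{F})=S_P(\mathbb{F})$ for every field $\mathbb{F}$.

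For $(iv)\Rightarrow(vii)$ I would show that every rank-preserving weak-map image $N$ of $P$ is graphic. Indeed $N\leq P=P_1\oplus\cdots\oplus P_t$ inherits the direct-sum decomposition (its pieces $N_i\leq P_i$ restrict to the same ground sets by Lemma~\ref{lem:weak-k-separate}); each $N_i$ is $U^2_4$-free by Corollary~\ref{cor:uni-free} and inherits every $2$-separation of $P_i$ by Lemma~\ref{lem:weak-k-separate}, so its canonical tree refines that of $P_i$, whose vertices are circuits and cocircuits; and a rank-preserving weak-map image of a circuit or of a cocircuit is again a direct sum of series-parallel matroids. Hence each $N_i$, and therefore $N$, is a direct sum of series-parallel matroids and so graphic; in particular $\Pi_{\mathbb{R}}(P)=\bigsqcup S_N(\mathbb{R})$ over the $\mathbb{R}$-realizable such $N$ is a disjoint union of graphic matroid strata. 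Finally $(vii)\Rightarrow(ii)$: by Corollary~\ref{cor:pos-variety} and the identity following it, $\Pi_{\mathbb{R}}(P)$ has a unique decomposition into matroid strata, and $S_P(\mathbb{R})$ is a nonempty member of it; so if every stratum appearing is graphic then $P$ is graphic.

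What remains is $(vi)\Rightarrow(i)$, which I expect to be the main obstacle. Since $\Pi^{\circ}_{\mathbb{F}}(P)=\bigsqcup_{M\in\Omega_P}S_M(\mathbb{F})$ always contains $S_P(\mathbb{F})$, condition $(vi)$ is equivalent to saying that no matroid in $\Omega_P$ other than $P$ is realizable over any field; so it suffices to show that a non-binary positroid has, in its envelope class, a graphic (hence $\mathbb{R}$-realizable) matroid distinct from itself. Using Theorem~\ref{thm:pos-direct-sum}, Propositions~\ref{prop:direct-sum-class-decomp} and~\ref{prop:2sum-class-decomp}, Theorems~\ref{thm:canon-tree} and~\ref{thm:pos-canon-tree-iff}, and the observation that a $3$-connected positroid on at least four elements cannot be a direct sum of series-parallel matroids and hence (by the equivalences above) is non-binary, this reduces to the core claim that a $3$-connected non-binary positroid $Q$ has $\Omega_Q\neq\{Q\}$, with a witness that is graphic. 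Here I would either produce a witness directly --- $Q$ has a $U^2_4$ minor, and one manipulates a bounded set of ``free'' bases using Theorems~\ref{thm:pos-crossing} and~\ref{thm:pos-intersect} to build a strictly smaller matroid with the same Grassmann necklace, exactly as the rank-$2$ whirl $U^2_4$ on $\{1<2<3<4\}$ has the four-element envelope class consisting of $U^2_4$ together with the matroids declaring $\{1,3\}$ parallel, $\{2,4\}$ parallel, or both --- or invoke the structural fact that such a $Q$ is isomorphic to a whirl and read off its envelope class. Lifting the graphic witness for $Q$ back through the direct-sum and $2$-sum decompositions of $P$ (direct sums and $2$-sums of graphic matroids being graphic) yields the desired element of $\Omega_P\setminus\{P\}$ and contradicts $(vi)$. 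The hardest point, then, is pinning down the envelope class of a $3$-connected non-binary positroid; everything else is bookkeeping with the decomposition machinery already in place.
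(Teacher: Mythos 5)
This theorem is imported from~\cite{quail2024positroid} and the present paper contains no proof of it, so there is no internal argument to compare yours against; I will judge the proposal on its own terms. Your cycle $(iv)\Rightarrow(ii)\Rightarrow(iii)\Rightarrow(i)\Rightarrow(iv)$ is correct, and $(iv)\Rightarrow(v)\Rightarrow(vi)$ and $(iv)\Rightarrow(vii)\Rightarrow(ii)$ are essentially sound. One soft spot in $(iv)\Rightarrow(vii)$: the phrase ``its canonical tree refines that of $P_i$'' glosses over the fact that Lemma~\ref{lem:weak-k-separate} only transfers the $2$-separations, while the tool for actually splitting a weak-map image along a $2$-separation into pieces controlled by the pieces of $P_i$ (Lemma~\ref{lem:2sum-weak-maps}) requires $\pi^:_N=\pi^:_P$, which an arbitrary rank-preserving weak-map image does not satisfy. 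A cleaner route: if $Q$ is the positroid envelope of $N$, then $\mathcal{B}(N)\subseteq\mathcal{B}(Q)\subseteq\mathcal{B}(P)$ (the Grassmann-necklace comparison used in the paper's proof of Theorem~\ref{thm:pos-tern}$(i)\Leftrightarrow(vii)$), so $Q$ is a $U^2_4$-free positroid, hence binary, hence $\Omega_Q=\{Q\}$ by $(v)$, hence $N=Q$ is a binary positroid and therefore graphic by $(i)\Rightarrow(ii)$.

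The genuine gap is $(vi)\Rightarrow(i)$. You correctly reduce it to showing that a non-binary positroid $P$ has some $M\in\Omega_P\setminus\{P\}$ realizable over some field, but neither of your two proposed routes works as stated. The structural claim that a $3$-connected non-binary positroid is isomorphic to a whirl is false: $U^2_5$ and $U^3_6$ are $3$-connected, non-binary positroids and are not whirls. Corollary~\ref{cor:tern-3connect} applies only to \emph{ternary} gammoids, and a general non-binary positroid need not be ternary, so that shortcut is unavailable. Your alternative --- manipulating bases to build a strictly smaller graphic matroid with the same Grassmann necklace --- is the right idea (for instance, for $U^2_5$ on $\{1<\dots<5\}$ the rank-$2$ matroid with parallel classes $\{1,3\},\{2,4\},\{5\}$ is graphic and has the same necklace), but you only gesture at it, and it is precisely the substantive content of the cited theorem: one must prove that every non-binary positroid envelope class contains a graphic (or at least somewhere-realizable) matroid other than $P$, for arbitrary $3$-connected non-binary positroids and not just whirls. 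Until that construction is supplied, the implication $(vi)\Rightarrow(i)$, and with it the full equivalence, is not established.
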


\section{Main results}\label{sec:main}

\subsection{Ternary and quaternary positroids}

In this subsection, we characterize the ternary and quaternary matroids with positroid orderings by their sets of minimal forbidden minors. We begin by studying the structure of the ternary positroids. By Theorems~\ref{thm:pos-direct-sum} and \ref{thm:pos-canon-tree-iff}, every positroid can be decomposed into direct sums and $2$-sums of positroids that are $3$-connected, circuits, or cocircuits. For any field $\mathbb{F}$, the direct sum or $2$-sum of $\mathbb{F}$-linear matroids is itself $\mathbb{F}$-linear. So, in characterizing the ternary positroids, it is sufficient to characterize the ordered $3$-connected matroids, circuits, and cocircuits that are ternary positroids. Every ordered circuit and cocircuit is a graphic positroid, which are all regular and therefore ternary. Furthermore, any $3$-connected matroid on fewer than four elements is a circuit or a cocircuit, so it remains to characterize the $3$-connected ternary positroids on at least four elements. 

To do so, we first consider the class of matroids called \emph{gammoids}, which contains all positroids as shown in~\cite{chidiac2021positroids}. We direct the reader to~\cite{oxley2006matroid} for a formal introduction to gammoids. Here, we will make use of the following result characterizing the $3$-connected ternary gammoids on at least four elements to constrain the class of $3$-connected ternary positroids. Note that $\mathcal{W}^r$ is the rank-$r$ whirl, which we will define shortly.

\begin{cor}[Corollary 4.3 in~\cite{oxley1987characterization}] \label{cor:tern-3connect}
    Let $M$ be a $3$-connected ternary gammoid having at least $4$ elements. Then $M \cong \mathcal{W}^r$ for some $r \geq 2$.
\end{cor}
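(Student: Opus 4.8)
The plan is to reduce the statement, via the standard excluded-minor theory, to a purely structural fact about extensions of whirls. First, recall that a matroid is ternary if and only if it has no minor isomorphic to $U^2_5$, $U^3_5$, $F_7$, or $F_7^*$, and that an $F_7$- or $F_7^*$-minor forces an $M(K_4)$-minor (indeed $F_7\backslash e\cong M(K_4)$ and $F_7^*/e\cong M(K_4)$). Since the classes of ternary matroids and of gammoids are both minor-closed and closed under duality, and since $M(K_4)$ is not a gammoid, a $3$-connected ternary gammoid $M$, together with every minor of $M$ and of $M^*$, has no minor isomorphic to $M(K_4)$, $U^2_5$, or $U^3_5$. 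So it suffices to prove: a $3$-connected matroid on at least four elements with no $M(K_4)$-, $U^2_5$-, or $U^3_5$-minor is a whirl.

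I would prove this by induction on $|E(M)|$. If $|E(M)|=4$, the only $3$-connected matroid on four elements is $U^2_4\cong\mathcal{W}^2$, so the claim holds. Suppose $|E(M)|\geq 5$. By Tutte's Wheels-and-Whirls Theorem \cite{oxley2006matroid}, one of the following holds: (a) $M$ is a wheel $M(W_s)$ with $s\geq 3$; (b) $M\cong\mathcal{W}^s$ for some $s\geq 2$; or (c) there is an element $e$ with $N:=M\backslash e$ or $N:=M/e$ being $3$-connected and $|E(N)|=|E(M)|-1\geq 4$. Case (a) is impossible, since $M(W_s)$ has $M(W_3)=M(K_4)$ as a minor for every $s\geq 3$; case (b) is the desired conclusion. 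In case (c), $N$ is a $3$-connected matroid on at least four elements inheriting the three forbidden-minor restrictions, so by induction $N\cong\mathcal{W}^s$ for some $s\geq 2$. Passing to $M^*$ if necessary (which is still $3$-connected with no $M(K_4)$-, $U^2_5$-, or $U^3_5$-minor, and with $N^*\cong(\mathcal{W}^s)^*\cong\mathcal{W}^s$ since whirls are self-dual), we may assume $N=M\backslash e$, so that $M$ is a $3$-connected single-element extension of $\mathcal{W}^s$. As $\mathcal{W}^s$ has $2s$ elements, $M$ has the odd number $2s+1$ of elements, hence is isomorphic to no whirl; so case (c) completes the induction provided it can be shown to be impossible.

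The crux is therefore: \emph{$\mathcal{W}^s$ has no $3$-connected single-element extension that is ternary and has no $M(K_4)$-minor.} This is where I expect the genuine work to lie. To carry it out I would fix the essentially unique $\mathbb{F}_3$-representation of $\mathcal{W}^s$ by an $s\times 2s$ matrix $[\,I_s\mid A_s\,]$ encoding the rim--spoke structure, so that a ternary single-element extension corresponds to appending a vector $v\in\mathbb{F}_3^s$. Using the dihedral automorphism group of $\mathcal{W}^s$ acting on the $s$ rim--spoke pairs, together with routine connectivity lemmas to discard the vectors $v$ for which the extension fails to be $3$-connected, one is left with a short list of candidates that is uniform in $s$ up to symmetry; the base of this reduction is $s=2$, where the only $3$-connected single-element extension of $\mathcal{W}^2=U^2_4$ is $U^2_5$, which is not ternary. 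For each surviving candidate one then exhibits a forbidden minor --- either $M(K_4)$, or $U^2_5$ (which makes the extension non-ternary) --- typically by contracting the rim and spoke elements away from $e$ down to a small wheel-like or line-like configuration. Establishing this non-existence statement finishes the proof; alternatively, the corollary follows at once from a full characterisation of the $3$-connected ternary matroids with no $M(K_4)$-minor, which is what the cited reference provides.
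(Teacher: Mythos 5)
The paper does not prove this statement; it is imported verbatim as Corollary~4.3 of the cited Oxley (1987) paper, so the only thing to assess is whether your argument is sound on its own. It is not: there is a genuine gap at the very first step. You replace the hypothesis ``gammoid'' by the weaker hypothesis ``no $M(K_4)$-minor'' and then claim it suffices to show that every $3$-connected matroid on at least four elements with no $M(K_4)$-, $U^2_5$-, or $U^3_5$-minor is a whirl. That statement is false. The matroids $P_7$ and $P_7^*$ are $3$-connected ternary matroids on seven elements that appear (alongside $U^2_5$, $U^3_5$, $M(K_4)$) in the excluded-minor list for ternary gammoids in Theorem~\ref{thm:tfae:tern-gammoids}(iii); being excluded minors, they contain none of $M(K_4)$, $U^2_5$, $U^3_5$ as minors, yet they have an odd number of elements and so are not whirls. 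The gammoid hypothesis therefore carries strictly more information than ``$M(K_4)$-free'': it also rules out $P_7$ and $P_7^*$, and your reduction discards exactly that information.

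The same counterexample breaks the ``crux'' of your induction. Running the Wheels-and-Whirls argument on $P_7$: its rank-$2$ contractions on six elements would be $U^2_6$ (which has a $U^2_5$-minor), so some single-element deletion $P_7\setminus e$ is $3$-connected, and by your own inductive hypothesis it must be $\mathcal{W}^3$. Hence $P_7$ \emph{is} a $3$-connected, ternary, $M(K_4)$-free single-element extension of a whirl, contradicting the nonexistence claim you propose to verify by case analysis; had you carried out that case analysis over vectors $v\in\mathbb{F}_3^s$ for $s=3$, the candidate realizing $P_7$ would have survived. To repair the argument you must keep the full strength of the gammoid hypothesis throughout --- for instance by also forbidding $P_7$- and $P_7^*$-minors (licensed by Theorem~\ref{thm:tfae:tern-gammoids}(iii)) and adding these to the minors you hunt for in the extension step --- or simply invoke Oxley's full characterization of $3$-connected ternary matroids with no $M(K_4)$-minor and then discard the non-whirl sporadic members of that list on the grounds that they are not gammoids, which is precisely how the cited Corollary~4.3 is obtained in the source.
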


Now, we define two important graphs for this section: $\mathcal{W}_r$ and $\mathcal{N}_r$. Let $\mathcal{W}_r$ be the wheel graph on $r+1$ vertices. A wheel graph is formed by adding a dominating vertex to the cycle graph on $r$ vertices. We call the edges incident to this dominating vertex the \emph{spokes} of the wheel and the edge set $R$ of the original cycle the \emph{rim}. The \emph{whirl} is the matroid $\mathcal{W}^r$ with $E(\mathcal{W}^r)=E(\mathcal{W}_r)$, and $\mathcal{B}(\mathcal{W}^r)= \mathcal{B}(M(\mathcal{W}_r))\cup \{  R\}$. The set $R$ is both a circuit and a hyperplane of $M(\mathcal{W}_r)$, and the formation of the new matroid $\mathcal{W}^r$ by adding $R$ to $\mathcal{B}(M(\mathcal{W}_r))$ is an example of a \emph{circuit-hyperplane relaxation}. We let $\mathcal{N}_r$ be the graph formed by a cycle $C_r$ of length $r$, joined at a vertex to its dual. For $r > 2$, the corresponding matroid $M( \mathcal{N}_r)$ has a unique circuit-hyperplane (the cycle of length $r$), and we denote its circuit-hyperplane relaxation by $\mathcal{N}^r$. When $r = 2$, $M(\mathcal{N}_2)$ has exactly two circuit-hyperplanes and the relaxation of either yields isomorphic matroids, so we may unambiguously denote them by $\mathcal{N}^2$. Both graphs  $\mathcal{W}_r$ and $\mathcal{N}_r$ are shown in Figure~\ref{fig:wheel-graph}.
In Lemma~\ref{lem:whirl-pos}, we show that every wheel on at least three vertices has an edge-ordering so that its corresponding whirl is a positroid. We use Propositions~\ref{prop:circ-hyp-relax-circuits} and~\ref{prop:circ-hyp-relax-cocircuits} about circuit-hyperplane relaxations.

\begin{prop}[Proposition 1.5.14 in \cite{oxley2006matroid}] \label{prop:circ-hyp-relax-circuits}
    Let $M$ be a matroid having a subset $X$ that is both a circuit and a hyperplane. Let $\mathcal{B'} = \mathcal{B}(M) \cup \{X\}$ (referred to as the relaxation of the circuit-hyperplane $X$ of $M$). Then $\mathcal{B'}$ is the set of bases of a matroid $M'$ on $E(M)$. Moreover,
    \begin{equation*}
        \mathcal{C}(M') = (\mathcal{C}(M) \setminus \{X\}) \cup \{X \cup e: e \in E(M) \setminus X\}.
    \end{equation*}
\end{prop}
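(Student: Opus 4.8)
The plan is to verify the basis exchange property directly for $\mathcal{B}'$, and then to read off $\mathcal{I}(M')$ and hence $\mathcal{C}(M')$. Two elementary observations drive everything. First, since $X$ is a circuit, $|X| = r_M(X) + 1$, and since $X$ is a hyperplane, $r_M(X) = r_M(M) - 1$; hence $|X| = r_M(M)$, so every member of $\mathcal{B}'$ has exactly $r_M(M)$ elements. Second, for each $x \in X$ the set $X \setminus x$ is independent (a circuit with one element removed) and $\cl_M(X \setminus x) = X$: indeed $x \in \cl_M(X \setminus x)$ because $X$ is a circuit, so $\cl_M(X \setminus x)$ is a flat of rank $r_M(M) - 1$ containing the rank-$(r_M(M)-1)$ flat $X$, and a flat cannot properly contain another flat of the same rank. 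Consequently, for every $e \in E(M) \setminus X$ the set $(X \setminus x) \cup e$ has rank and size $r_M(M)$, so it is a basis of $M$.

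Next I would check basis exchange for $B_1, B_2 \in \mathcal{B}'$ and $x \in B_1 \setminus B_2$ in three cases. If $B_1, B_2 \in \mathcal{B}(M)$, the exchange holds inside $M$ and $\mathcal{B}(M) \subseteq \mathcal{B}'$. If $B_1 = X$ (so $x \in X$) and $B_2 \in \mathcal{B}(M)$, then $B_2 \not\subseteq X$, since $B_2$ is independent of size $|X|$ while $X$ is not; picking any $y \in B_2 \setminus X$, the set $(X \setminus x) \cup y$ is a basis of $M$ by the closure fact, and $y \in B_2 \setminus B_1$. If $B_1 \in \mathcal{B}(M)$ and $B_2 = X$, consider $\cl_M(B_1 \setminus x)$: if some $y \in X$ lies outside it, then $(B_1 \setminus x) \cup y$ is a basis of $M$ and $y \in X \setminus B_1$; otherwise $X \subseteq \cl_M(B_1 \setminus x)$, which forces $\cl_M(B_1 \setminus x) = X$ (equal-rank flats) and hence $B_1 \setminus x \subseteq X$, so the unique $y \in X \setminus (B_1 \setminus x)$ satisfies $y \in X \setminus B_1$ and $(B_1 \setminus x) \cup y = X \in \mathcal{B}'$. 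This establishes that $\mathcal{B}'$ is the set of bases of a matroid $M'$ on $E(M)$.

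For the circuits, note that $I$ is independent in $M'$ iff it is contained in a member of $\mathcal{B}'$, so $\mathcal{I}(M') = \mathcal{I}(M) \cup 2^X$; equivalently, a set is dependent in $M'$ exactly when it is dependent in $M$ and not a subset of $X$. Each $C \in \mathcal{C}(M) \setminus \{X\}$ is then still minimally dependent in $M'$, since a circuit of $M$ distinct from $X$ cannot be contained in $X$ while all its proper subsets are $M$-independent. For $e \in E(M) \setminus X$, the set $X \cup e$ is dependent in $M'$, and its maximal proper subsets, namely $X$ and the sets $(X \setminus x) \cup e$ with $x \in X$, are all independent in $M'$, so $X \cup e \in \mathcal{C}(M')$. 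Conversely, any $M'$-circuit $C$ contains an $M$-circuit $C_0$: if $C_0 \neq X$ then $C_0$ is dependent in $M'$ and minimality gives $C = C_0$; if $C_0 = X$ then, since $C$ is not a subset of $X$, it contains some $e \notin X$, whence $X \cup e \subseteq C$ and minimality gives $C = X \cup e$. This yields the claimed description of $\mathcal{C}(M')$.

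I expect the main obstacle to be the sub-case of the exchange axiom with $B_2 = X$ in which no genuine basis of $M$ can be produced and one is forced to take the new basis $X$ itself; handling it cleanly is exactly what the closure identity $\cl_M(X \setminus x) = X$ is for, and the rest of the argument is bookkeeping on top of that identity.
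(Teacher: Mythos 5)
This proposition is quoted from Oxley (Proposition 1.5.14) and the paper supplies no proof of its own, so there is no in-paper argument to compare against. Your proof is correct and complete: the three-case verification of basis exchange, pivoting on the identity $\cl_M(X \setminus x) = X$, followed by reading off $\mathcal{I}(M') = \mathcal{I}(M) \cup 2^X$ and the circuit description, is exactly the standard textbook argument for circuit-hyperplane relaxation.
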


\begin{prop}[Proposition 2.1.7 in \cite{oxley2006matroid}] \label{prop:circ-hyp-relax-cocircuits}
    If $M'$ is obtained from $M$ by relaxing a circuit-hyperplane $X$ of $M$, then $(M')^*$ can be obtained from $M^*$ by relaxing the circuit-hyperplane $E(M) \setminus X$ of $M^*$.
\end{prop}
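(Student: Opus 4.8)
The plan is to push the hypothesis through matroid duality and then compare bases. First I would recall the standard duality dictionary, all of which follows from the definitions given above together with the fact that the complements of the hyperplanes of a matroid are exactly its cocircuits: the cocircuits of $M$ are the circuits of $M^*$, and dually the circuits of $M$ are the cocircuits of $M^*$. Applying this to $X$: because $X$ is a hyperplane of $M$, its complement $E(M)\setminus X$ is a cocircuit of $M$, hence a circuit of $M^*$; because $X$ is a circuit of $M$, it is a cocircuit of $M^*$, so $E(M)\setminus X$, being the complement of a cocircuit of $M^*$, is a hyperplane of $M^*$. Note also that $E(M)\setminus X$ is a proper nonempty subset of $E(M)$, since a hyperplane has rank $r_M(M)-1$ (so $X\neq E(M)$) and a circuit is nonempty (so $X\neq\emptyset$). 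Thus $E(M)\setminus X$ is a circuit-hyperplane of $M^*$, and Proposition~\ref{prop:circ-hyp-relax-circuits} applies: relaxing it yields a matroid $N$ on $E(M)$ with $\mathcal{B}(N)=\mathcal{B}(M^*)\cup\{E(M)\setminus X\}$.

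It then remains to identify $N$ with $(M')^*$, which is a one-line base computation. From the definition of the relaxation, $\mathcal{B}(M')=\mathcal{B}(M)\cup\{X\}$, so by the definition of the dual,
\begin{align*}
\mathcal{B}\big((M')^*\big) &= \{E(M)\setminus B: B\in\mathcal{B}(M')\} = \{E(M)\setminus B: B\in\mathcal{B}(M)\}\cup\{E(M)\setminus X\}\\
&= \mathcal{B}(M^*)\cup\{E(M)\setminus X\}.
\end{align*}
The right-hand side is exactly $\mathcal{B}(N)$, so $(M')^*=N$, which is the desired statement.

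There is no real obstacle here; the argument is essentially bookkeeping. The only place to be careful is the ``complement of a hyperplane is a cocircuit'' correspondence and the resulting chain of dualizations: an off-by-one-dual slip there would swap the circuit and hyperplane roles and break the argument. One should also not forget the trivial check that $E(M)\setminus X$ is a legitimate proper nonempty subset, so that it can genuinely serve as a circuit-hyperplane to which Proposition~\ref{prop:circ-hyp-relax-circuits} applies. Everything else is forced by the definitions of duality and relaxation.
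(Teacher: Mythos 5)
Your proof is correct; the paper itself gives no argument for this proposition, citing it directly from Oxley, and your derivation is the standard one. The two dualizations (complement of a hyperplane is a cocircuit, circuit of $M$ is a cocircuit of $M^*$) are applied in the right direction, and the base-set computation $\mathcal{B}((M')^*)=\mathcal{B}(M^*)\cup\{E(M)\setminus X\}$ correctly identifies $(M')^*$ with the relaxation of $E(M)\setminus X$ in $M^*$.
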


\begin{figure}
    \centering
    \begin{tikzpicture}[scale=.9]
        \draw[fill=black] (0,0) circle (3pt);
        \draw[fill=black] (0:2) circle (3pt);
        \draw[fill=black] (45:2) circle (3pt);
        \draw[fill=black] (90:2) circle (3pt);
        \draw[fill=black] (135:2) circle (3pt);
        \draw[fill=black] (180:2) circle (3pt);
        \draw[fill=black] (195:2) circle (1pt);
        \draw[fill=black] (210:2) circle (1pt);
        \draw[fill=black] (225:2) circle (1pt);
        \draw[fill=black] (240:2) circle (1pt);
        \draw[fill=black] (255:2) circle (1pt);
        \draw[fill=black] (270:2) circle (3pt);
        \draw[fill=black] (315:2) circle (3pt);

        \draw[thick] (0,0) -- (0:2) node[above,midway] {7};
        \draw[thick] (0,0) -- (45:2) node[above,midway] {5};
        \draw[thick] (0,0) -- (90:2) node[left,midway] {3};
        \draw[thick] (0,0) -- (135:2) node[left,midway] {1};
        \draw[thick] (0,0) -- (180:2) node[below,midway] {2r-1};
        \draw[thick] (0,0) -- (270:2) node[right,midway] {\rotatebox{-90}{11}};
        \draw[thick] (0,0) -- (315:2) node[right,midway] {\rotatebox{-45}{9}};

        \draw[thick] (0:2) arc (0:45:2) node[right,midway] {6};
        \draw[thick] (45:2) arc (45:90:2) node[above,midway] {4};
        \draw[thick] (90:2) arc (90:135:2) node[above,midway] {2};
        \draw[thick] (135:2) arc (135:180:2) node[left,midway] {2r};
        \draw[thick] (270:2) arc (270:315:2) node[below,midway] {10};
        \draw[thick] (315:2) arc (315:360:2) node[right,midway] {8};

        \draw (-2,2) node{$\mathcal{W}_r$};

\begin{scope}[xshift=6cm]

        wheel graph
        \draw[fill=black] (0:2) circle (3pt);
        \draw[fill=black] (0:5) circle (3pt);
        \draw[fill=black] (45:2) circle (3pt);
        \draw[fill=black] (90:2) circle (3pt);
        \draw[fill=black] (135:2) circle (3pt);
        \draw[fill=black] (180:2) circle (3pt);
        \draw[fill=black] (195:2) circle (1pt);
        \draw[fill=black] (210:2) circle (1pt);
        \draw[fill=black] (225:2) circle (1pt);
        \draw[fill=black] (240:2) circle (1pt);
        \draw[fill=black] (255:2) circle (1pt);
        \draw[fill=black] (270:2) circle (3pt);
        \draw[fill=black] (315:2) circle (3pt);

        \draw[thick] (0:2) arc (0:45:2) node[right,midway] {6};
        \draw[thick] (45:2) arc (45:90:2) node[above,midway] {4};
        \draw[thick] (90:2) arc (90:135:2) node[above,midway] {2};
        \draw[thick] (135:2) arc (135:180:2) node[left,midway] {2r};
        \draw[thick] (270:2) arc (270:315:2) node[below,midway] {10};
        \draw[thick] (315:2) arc (315:360:2) node[right,midway] {8};

\draw [black!100,line width=1pt] (2,0) to [in=100, out=80] (5,0);
\draw [black!100,line width=1pt] (2,0) to [in=160, out=20] (5,0);
\draw [black!100,line width=1pt] (2,0) to [in=-100, out=-80] (5,0);
\draw [black!100,line width=1pt] (2,0) to [in=-160, out=-20] (5,0);

\draw[fill=black] (3.5,.15) circle (1pt);
\draw[fill=black] (3.5,0) circle (1pt);
\draw[fill=black] (3.5,-.15) circle (1pt);

\draw (3.5,1.1) node{$1$};
\draw (3.5,.55) node{$3$};
\draw (3.5,-1.1) node{$2r-1$};
\draw (3.5,-.55) node{$2r-3$};
\draw (-2,2) node{$\mathcal{N}_r$};

\end{scope}

    \end{tikzpicture}
    \caption{The wheel graph $\mathcal{W}_r$ and the graph $\mathcal{N}_r$, both with a positroid edge-ordering.}
    \label{fig:wheel-graph}
\end{figure}

\begin{lemma}\label{lem:whirl-pos}
The whirl $\mathcal{W}^r$, with $r \geq 2$, has a positroid edge-ordering. 
\end{lemma}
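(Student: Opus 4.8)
The plan is to exhibit an explicit edge-ordering of the wheel graph $\mathcal{W}_r$—precisely the one drawn in Figure~\ref{fig:wheel-graph}, where spokes get the odd labels $1,3,5,\ldots,2r-1$ and rim edges get the even labels $2,4,\ldots,2r$, arranged so that each spoke is immediately followed (cyclically) by the rim edge "after" it—and then verify that with this order the whirl $\mathcal{W}^r$ is a positroid using the circuit–cocircuit crossing criterion of Theorem~\ref{thm:pos-crossing}. So the first step is to pin down the circuits and cocircuits of $\mathcal{W}^r$. By Proposition~\ref{prop:circ-hyp-relax-circuits} applied to the circuit-hyperplane $R$ (the rim) of $M(\mathcal{W}_r)$, the circuits of $\mathcal{W}^r$ are: all circuits of $M(\mathcal{W}_r)$ except $R$ itself, together with all sets $R \cup \{s\}$ for $s$ a spoke. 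Dually, via Proposition~\ref{prop:circ-hyp-relax-cocircuits}, the cocircuits of $\mathcal{W}^r$ are the cocircuits of $M(\mathcal{W}_r)$ except the cocircuit $E \setminus R$ (the set of all spokes), together with $(E \setminus R) \cup \{e\}$ for each rim edge $e$. Concretely, in wheel terms: circuits of $M(\mathcal{W}_r)$ are the rim $R$, the triangles/"fans" consisting of a consecutive rim-path together with the two spokes bounding it, and cocircuits of $M(\mathcal{W}_r)$ are the vertex stars (two spokes and the two rim edges meeting at a rim vertex) plus the all-spokes set.

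The second step is the crossing check. I would take an arbitrary disjoint pair $(C, C^*)$ with $C$ a circuit and $C^*$ a cocircuit of $\mathcal{W}^r$ and show $C$ and $C^*$ are non-crossing in the given cyclic order. I would organize this by the type of $C$ and $C^*$. The key structural observation to exploit is that in the chosen order the spokes and rim edges interleave cyclically as $s_1, r_1, s_2, r_2, \ldots, s_r, r_r$ where $s_i$ is the spoke at rim vertex $v_i$ and $r_i$ is the rim edge $v_i v_{i+1}$; a "fan" circuit is a set of the form $\{s_i, r_i, r_{i+1}, \ldots, r_{j-1}, s_j\}$, which occupies a single contiguous cyclic block $s_i, r_i, s_{i+1}, \ldots, s_j$ minus the intermediate spokes—so it lies inside a cyclic interval. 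A vertex-star cocircuit $\{s_i, r_{i-1}, r_i, s_{i+1}\}$... wait, that's not contiguous; rather $\{s_{i-1}? \}$—here I must be careful: the star at $v_i$ is $\{s_i\} \cup \{r_{i-1}, r_i\} \cup \{\text{the two spokes adjacent along the rim}\}$, which in graphic terms for a wheel is the cocircuit $\{s_i, r_{i-1}, r_i\}$ when... no—deleting the two rim edges at $v_i$ and the spoke at $v_i$ disconnects $v_i$, so the vertex star is exactly $\{s_i, r_{i-1}, r_i\}$, a contiguous block $r_{i-1}, s_i, r_i$ in the cyclic order. Thus both ordinary circuits and ordinary cocircuits of $M(\mathcal{W}_r)$, other than $R$ and $E\setminus R$, are cyclic intervals; two cyclic intervals are crossing only if they "interleave," and disjointness of a fan and a star forces them into position so that they don't. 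The relaxed circuits $R \cup \{s_i\}$ and relaxed cocircuits $(E\setminus R)\cup\{r_i\}$ are not intervals, so those cases need separate, direct inspection—but a disjoint pair cannot involve $R\cup\{s_i\}$ together with $(E\setminus R)\cup\{r_j\}$ since every such circuit meets every such cocircuit (they share all of... in fact $R\cup\{s_i\}$ contains all rim edges and $(E\setminus R)\cup\{r_j\}$ contains $r_j$, so they intersect), which kills the only genuinely awkward sub-case.

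The third step is assembling these sub-cases into a clean case analysis and checking the low-rank boundary case $r=2$ separately (where $\mathcal{W}^2 \cong U^2_4$, which is well known to be a positroid with the standard order $1<2<3<4$), since the "fan"/interleaving language degenerates there. I would then conclude that no disjoint circuit–cocircuit pair crosses, so by Theorem~\ref{thm:pos-crossing} the ordered matroid $\mathcal{W}^r$ with this edge ordering is a positroid.

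I expect the main obstacle to be the bookkeeping in the crossing check for the case where $C$ is a fan circuit and $C^*$ is a vertex-star cocircuit: one must argue that disjointness (the fan and the star share no edge, which geometrically means the star's vertex $v_i$ is not an endpoint of the fan's rim-path and $s_i$ is not one of the fan's two boundary spokes) forces the star's contiguous block to sit entirely inside one of the two arcs that the fan's two boundary spokes cut the cyclic order into—hence non-crossing. Getting the indexing exactly right so that "the rim edge immediately after spoke $s_i$" is $r_i$ and that the fan and star blocks genuinely nest rather than interleave is the fiddly part; everything else (the relaxed-circuit and relaxed-cocircuit cases, the all-rim/all-spoke exclusions, the $r=2$ base case) is routine once the interleaving order $s_1 r_1 s_2 r_2 \cdots$ is fixed.
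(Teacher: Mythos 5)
Your overall strategy---fix the interleaved spoke/rim ordering of Figure~\ref{fig:wheel-graph}, compute the circuits and cocircuits of $\mathcal{W}^r$ via Propositions~\ref{prop:circ-hyp-relax-circuits} and~\ref{prop:circ-hyp-relax-cocircuits}, and verify the non-crossing condition of Theorem~\ref{thm:pos-crossing}---is exactly the paper's. However, there is a concrete error in your step that enumerates the cocircuits, and it undermines the case analysis you build on it. You assert that the cocircuits of $M(\mathcal{W}_r)$ are the vertex stars $\{s_i, r_{i-1}, r_i\}$ together with the all-spokes set. That list is incomplete: for every arc $\{v_i,\ldots,v_j\}$ of two or more consecutive rim vertices, the edge cut separating that arc from the rest of the wheel is a bond, namely $\{r_{i-1}, s_i, s_{i+1}, \ldots, s_j, r_j\}$ (already present in $\mathcal{W}_3 \cong K_4$, whose four-edge bonds are of this type). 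These cocircuits are \emph{not} cyclic intervals in your ordering---they omit the interior rim edges $r_i,\ldots,r_{j-1}$---so your central claim that ``both ordinary circuits and ordinary cocircuits \ldots are cyclic intervals; two cyclic intervals are crossing only if they interleave'' is false as stated, and your planned fan-versus-vertex-star case analysis simply does not reach these cocircuits. (A smaller wobble: the fan circuits themselves are only \emph{contained in} cyclic intervals, not equal to them, so even in the cases you do treat you need the two containing intervals to be disjoint, which is what your final paragraph gestures at.)

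The paper avoids classifying cocircuits altogether, and this is worth internalizing because it repairs your gap cleanly: given a non-relaxed circuit $C$, i.e.\ a cycle of $\mathcal{W}_r$ through the hub with boundary spokes $s_i$ and $s_j$, any cocircuit $C^*$ disjoint from $C$ corresponds to a bond whose vertex bipartition does not cut $C$, so all vertices of $C$ lie on one side; hence every interior spoke $s_k$ ($i<k<j$), having both endpoints on $C$, is excluded from $C^*$. Together with disjointness from $C$ itself, this confines $C^*$ to the cyclic interval complementary to the one spanned by $C$, and non-crossing follows with no knowledge of what $C^*$ looks like. Your treatment of the relaxed circuits and cocircuits is also only partially carried out (you rule out a relaxed circuit paired with a relaxed cocircuit, but not a relaxed circuit against an ordinary cocircuit or vice versa); the same observation you use---$R\cup\{s\}$ contains every rim edge, $(E\setminus R)\cup\{r\}$ contains every spoke, and every bond other than the all-spokes set contains a rim edge while every cycle other than $R$ contains a spoke---closes those cases, so that part is a minor omission rather than an obstruction.
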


\begin{proof}
Without loss of generality, let $\mathcal{W}_r$ be the wheel graph with $E(\mathcal{W}_r) = [2r]$, where $r \geq 2$, with edge-ordering given as in Figure \ref{fig:wheel-graph}. Let $R = \{2,4,\ldots,2r-2,r\} \subset E(\mathcal{W}_r)$ be the rim of the wheel graph. Observe that $R$ is a circuit-hyperplane of $M(\mathcal{W}_r)$, and when $r \neq 3$, $R$ is the unique circuit-hyperplane of $M(\mathcal{W}_r)$. Let $\mathcal{W}^r$ be the rank-$r$ whirl obtained from $M(\mathcal{W}_r)$ by the circuit-hyperplane relaxation of $R$. 
By Propositions \ref{prop:circ-hyp-relax-circuits} and \ref{prop:circ-hyp-relax-cocircuits}, the circuits of $\mathcal{W}^r$ are
\[
    \mathcal{C}(\mathcal{W}^r) = (\mathcal{C}(M(\mathcal{W}_r)) \setminus \{R\}) \cup \{R \cup \{e\} : e \in E(\mathcal{W}^r) \setminus R\},
\]
and the cocircuits of $\mathcal{W}^r$ are
\[\mathcal{C}^*(\mathcal{W}^r) = (\mathcal{C}^*(M(\mathcal{W}_r)) \setminus \{E(\mathcal{W}^r) \setminus R\}) \cup \{(E(\mathcal{W}^r) \setminus R) \cup \{e\} : e \in R\}.
\]

Let $C \in \mathcal{C}(\mathcal{W}^r)$ and $C^*\in \mathcal{C}^*(\mathcal{W}^r)$ such that $C \cap C^* = \emptyset$. Note that if $C$ is of the form $R \cup \{e\}$, then there are no cocircuits disjoint from it. By duality, if $C^*$ is of the form $(E(\mathcal{W}^r) \setminus R) \cup \{e\}$, then there are no circuits disjoint from it. Therefore, $C \in \mathcal{C}(M(\mathcal{W}_r) \setminus \{R\})$ and $C^* \in \mathcal{C}^*(M(\mathcal{W}_r)) \setminus \{E(\mathcal{W}_r) \setminus R\}$. Every circuit $C$ in $\mathcal{W}^r$ corresponds to a cycle in $\mathcal{W}_r$ that includes the center vertex. Therefore, it has the form, $i,i+1,i+3,\dots,j-1,j$, where edges $i$ and $j$ are incident to the center vertex, and the others are on the rim. The spoke edges $i+2,i+4,\dots,j-2$ cannot be part of a bond that is disjoint from $C$, since all of their endpoints are on the cycle $C$ in $\mathcal{W}_r$. Therefore, $C^*\subseteq \{ j+1,j+2,\dots,i-1  \}$, hence $C$ and $C^*$ are disjoint, non-crossing subsets of $E(\mathcal{W}^r)$. By Theorem \ref{thm:pos-crossing}, $\mathcal{W}^r$ is a positroid.
\end{proof}

It follows from Corollary~\ref{cor:tern-3connect}, that any $3$-connected positroid that is neither a circuit nor a cocircuit is isomorphic to a rank-$r$ whirl for some $r \geq 2$. Therefore, any ternary positroid can be obtained from direct sums and $2$-sums from circuits, cocircuits, and matroids isomorphic to rank-$r$ whirls for $r \geq 2$. We will formalize this structural characterization later, in Theorem~\ref{thm:pos-tern-minor}. In Lemma~\ref{lem:whirl-pos}, we showed that whirls have positroid orderings by constructing an appropriate ordering on the edges of the wheel graph, shown in Figure~\ref{fig:wheel-graph}. In fact, up to a shift by $1$ and order-preserving isomorphism, this is the only such ordering. This restricts the possible decorated permutations of such a positroid to two options. We formalize this in Lemma~\ref{lem:whirl-ordering} and Corollary~\ref{cor:whirl-perm}. 

First, observe that the rank-$2$ whirl is isomorphic to the uniform matroid $U^2_4$. For a totally ordered set $E$, there is a unique rank-$r$ positroid on $E$ isomorphic to the uniform matroid $U^r_{|E|}$. Therefore, for $|E| = 4$, there is a unique positroid $P$ on $E$ such that $P \cong U^2_4 \cong \mathcal{W}^r$. We now consider positroids isomorphic to whirls of rank greater than two.

\begin{lemma}\label{lem:whirl-ordering}
    Let $P$ be a positroid on a totally ordered set $E$. Suppose that for some $r \geq 3$, $P$ is isomorphic to the rank-$r$ whirl $\mathcal{W}^r$. Then, $P$ and $P^*$ are the unique positroids on $E$ isomorphic to $\mathcal{W}^r$.
\end{lemma}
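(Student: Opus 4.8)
The plan is to show that at most two positroids on $E$ are isomorphic to $\mathcal{W}^r$, and that $P$ and $P^*$ are two distinct such positroids. The second point is quick: positroids are closed under duality (Proposition~\ref{pos-dual-minor}), so $P^*$ is a positroid; and $\mathcal{W}^r$ is self-dual as an (unordered) matroid, since $M(\mathcal{W}_r)$ is isomorphic to its own dual (wheels are planar and self-dual, with the rim and spoke roles exchanged), so by Proposition~\ref{prop:circ-hyp-relax-cocircuits} the matroid $(\mathcal{W}^r)^*$, being the relaxation of the circuit-hyperplane $E(\mathcal{W}^r)\setminus R$ of $M(\mathcal{W}_r)^*$, is again a rank-$r$ whirl. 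Hence $P^*\cong\mathcal{W}^r$, and once the ``at most two'' bound is in hand we only need $P\neq P^*$ to conclude that $\{P,P^*\}$ is exactly the set of positroids on $E$ isomorphic to $\mathcal{W}^r$.

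The heart of the argument is a rigidity statement: in any positroid $Q$ on $E$ isomorphic to $\mathcal{W}^r$, every $3$-element circuit (a \emph{triangle} $\{s_i,e_i,s_{i+1}\}$ of the whirl) and every $3$-element cocircuit (a \emph{triad} $\{e_{i-1},s_i,e_i\}$) is a set of three cyclically consecutive elements of $E$. To prove this for a triangle $T$, I would use Propositions~\ref{prop:circ-hyp-relax-circuits} and~\ref{prop:circ-hyp-relax-cocircuits} to describe the circuits and cocircuits of $\mathcal{W}^r$, and check that the triads of $\mathcal{W}^r$ disjoint from $T$ form a chain in which consecutive members overlap and whose union is all of $E\setminus T$. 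By Theorem~\ref{thm:pos-crossing}, each cocircuit disjoint from $T$ must lie in one of the three cyclic arcs into which the three elements of $T$ split $E$; since the chain of disjoint triads is connected, its members all lie in a single arc, so that arc contains all of $E\setminus T$ and the other two arcs are empty, i.e.\ $T$ is a cyclic interval. Applying the same reasoning to $Q^*$ (also a positroid isomorphic to $\mathcal{W}^r$) shows every triad is a cyclic interval.

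Next I would exploit the overlap pattern of the whirl's triangles: consecutive triangles $T_i,T_{i+1}$ meet in exactly one element, and two $3$-element cyclic intervals of a cycle of length $2r\geq 6$ that meet in a single element must share an endpoint. Walking around $T_1,T_2,\dots,T_r,T_1$, each step shifts the interval by two positions (left or right); since all $r$ triangles are distinct, all $r$ intervals lie in one parity class of positions, and the walk closes up, it must march monotonically around $E$. This forces $\{T_1,\dots,T_r\}$ to be one of exactly two families of intervals, distinguished by a \emph{phase} --- whether the shared spoke-elements occupy the odd or the even positions of $E$. The triads, being cyclic intervals each meeting the corresponding triangle in two elements, are then forced to be the interleaved family, and the remaining circuits are determined from the triangles as well (a fan circuit is the symmetric difference of a block of consecutive triangles, and $R\cup\{s_i\}$ is the union of the complement of the spoke set with one spoke). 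Thus $Q$ is completely determined by its phase, so there are at most two positroids on $E$ isomorphic to $\mathcal{W}^r$. Since the triangles of $P^*$ are the triads of $P$, which have the phase opposite to the triangles of $P$, the positroids $P$ and $P^*$ have opposite phases and are therefore distinct; hence they are precisely the two.

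The main obstacle I anticipate is the rigidity step together with its bookkeeping: verifying cleanly that the chain of triads disjoint from a triangle exhausts $E\setminus T$ (this must be done uniformly, and in particular for $r=3$, where $M(\mathcal{W}_r)=M(K_4)$ has additional circuit-hyperplanes, affecting the description of the relaxation but not the final count), and turning the ``overlapping interval walk'' observation into a rigorous proof that exactly the two phases survive and that the phase determines the positroid.
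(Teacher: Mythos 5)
Your proposal is correct and follows essentially the same route as the paper: both arguments use Theorem~\ref{thm:pos-crossing} to force every triangle and every triad of the whirl to be a cyclic interval of $E$, which rigidifies the edge-ordering of $\mathcal{W}_r$ up to the two ``phases.'' The only mechanical difference is in how the interval property is obtained: the paper deletes a triad, notes that the resulting graphic matroid is $2$-connected so that any two remaining elements lie on a common circuit disjoint from that triad, and derives a crossing, whereas you chain together the overlapping triads disjoint from a fixed triangle. Your endgame (the monotone walk of overlapping $3$-element intervals, and the observation that $P$ and $P^*$ realize the two opposite phases and are therefore distinct) is in fact spelled out more completely than in the paper, which stops at ``one of two edge-orderings.''
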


\begin{proof}
    As positroids are closed under order-preserving isomorphisms, without loss of generality we may consider $P$ to be a positroid on $[2r]$. Let $\mathcal{W}_r$ be an edge-ordered wheel graph whose rim we denote by $R$, as in the proof of Lemma~\ref{lem:whirl-pos}, such that $P$ is obtained from $M(\mathcal{W}_r)$ by the circuit-hyperplane relaxation of $R$.
    We note that the set of circuits of $P$ is 
    \[
        \mathcal{C}(M(\mathcal{W}_r)) \setminus \{R\} \cup \{R \cup e : e \in E \setminus R\},
    \]
    and the set of cocircuits of $P$ is
    \[
        \mathcal{C}^*(M(\mathcal{W}_r) \setminus \{E \setminus R\}) \cup \{(E \setminus R) \cup e : e \in R\}.
    \]
  
    By Theorem~\ref{thm:pos-crossing}, for any $C\in \mathcal{C}(P)$ and $C^*\in \mathcal{C}^*(P)$ such that $C \cap C^* = \emptyset$, we must have that $C$ and $C^*$ are non-crossing subsets of $E$. Consider a cocircuit $C^*$ formed by the edges incident to one non-center vertex in $\mathcal{W}_r$. The graph obtained from $\mathcal{W}_r$ by deleting $C^*$ does not contain the cycle formed by $R$. Furthermore, $\mathcal{W}_r \setminus C^*$ contains at least $3$ vertices and is 2-vertex-connected, hence by Proposition 4.1.7 in~\cite{oxley2006matroid}, $M(\mathcal{W}_r \setminus C^*) = M(\mathcal{W}_r) \setminus C^*$ is $2$-connected. This implies that any two elements in $E \setminus C^*$ appear together in some circuit contained in $\mathcal{C}(M(\mathcal{W}_r)) \setminus \{R\}$ and therefore in some circuit of $P$. Furthermore, this implies that $C^*$ must be a cyclic interval of $E$, or else we can find a disjoint cycle $C \in \mathcal{C}(P)$ for which $C$ and $C^*$ are crossing subsets of $E(P)$. By duality, the same holds for every triangle in $\mathcal{W}_r$. This forces $\mathcal{W}_r$, up to graph isomorphism, to have one of two edge-orderings: either the ordering shown in Figure~\ref{fig:wheel-graph}, with odd labels on the spokes and even labels on the rim, or a shift of this labeling by $1$ modulo $2r$, which has even labels on the spokes and odd labels on the rim.
\end{proof}

\begin{cor} \label{cor:whirl-perm}
    Let $P$ be a positroid on $[2r]$, such that $P\cong \mathcal{W}^r$, for some $r \geq 2$. Then
    \begin{equation*}
        \pi^:_P = (1,3,\ldots,{2r-1})({2r},{2r-2},\ldots,2)
    \end{equation*}
    \hspace{175pt} or
    \begin{equation*}
        \pi^:_P = (2,4,\ldots,2r)({2r-1},{2r-3},\ldots,1).
    \end{equation*}
\end{cor}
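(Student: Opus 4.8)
The plan is to derive the corollary from Lemma~\ref{lem:whirl-ordering} by a direct computation of $\pi^:_P$ from its definition. When $r = 2$ one invokes the observations immediately preceding the statement: $\mathcal{W}^2 \cong U^2_4$, and there is a unique positroid on $[4]$ isomorphic to it; since every pair of elements is a basis, $i_j \in \cl\big([i_{j+1},i_k]\big)$ precisely when $\lvert [i_{j+1},i_k] \rvert \geq 2$, which gives $\pi^:_P = (1,3)(2,4)$, and both displayed permutations specialize to this when $r=2$. So assume $r \geq 3$ henceforth.

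By Lemma~\ref{lem:whirl-ordering}, and more precisely by the edge-orderings exhibited in its proof, $P$ (or else $P^*$) is obtained from the cycle matroid $M(\mathcal{W}_r)$ by relaxing the rim $R$, where $\mathcal{W}_r$ carries the ordering of Figure~\ref{fig:wheel-graph}: the spokes receive the odd labels $1,3,\ldots,2r-1$ and the rim edges the even labels $2,4,\ldots,2r$, with the spoke $2k-1$ incident to the rim vertex $v_k$ and the rim edge $2k$ joining $v_k$ to $v_{k+1}$ (vertex indices modulo $r$); the only alternative is the shift of this ordering by $1$. Since Corollary~\ref{dec-perm-dual} gives $\pi^:_{P^*} = (\pi^:_P)^{-1}$, and inverting $(1,3,\ldots,2r-1)(2r,2r-2,\ldots,2)$ reverses each cycle and produces exactly $(2,4,\ldots,2r)(2r-1,2r-3,\ldots,1)$, it is enough to treat the single case in which $P$ carries the ordering of Figure~\ref{fig:wheel-graph} and to show there that $\pi^:_P = (1,3,\ldots,2r-1)(2r,2r-2,\ldots,2)$; the shifted ordering then yields $P^*$ together with the remaining permutation.

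For the computation I would first record the structural fact that relaxing the circuit-hyperplane $R$ changes the rank of no set other than $R$ itself, so $r_P(A) = r_{M(\mathcal{W}_r)}(A)$ for every $A \neq R$, and consequently $\cl_P(A) = \cl_{M(\mathcal{W}_r)}(A)$ whenever $A$ contains a spoke (so that $A \cup \{e\} \neq R$ for all $e$). Since $P$ has no loops or coloops, $\pi^:_P(m)$ is computed by scanning cyclic intervals. If $m = 2k-1$ is a spoke, the relevant intervals are $[2k,\ell]$: the interval $\{2k\}$ does not have $2k-1$ in its closure, while $\{2k,2k+1\}$ together with $2k-1$ forms a triangle of $\mathcal{W}_r$ distinct from $R$, hence a circuit of $P$, so $\pi^:_P(2k-1) = 2k+1$. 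If $m = 2k$ is a rim edge, every relevant interval $[2k+1,\ell]$ contains the spoke $2k+1$, so $2k \in \cl_P([2k+1,\ell])$ if and only if $v_k$ and $v_{k+1}$ lie in one component of the subgraph with edge set $[2k+1,\ell]$. Reading off $[2k+1,\ell]$ in the order $\leq_{2k+1}$, the elements enter alternately as a spoke at a not-yet-reached rim vertex and as the rim edge to the next rim vertex, so the component of $v_{k+1}$ absorbs one new rim vertex with each rim edge, in cyclic order; hence $v_k$ is the last rim vertex to join and does so exactly when the rim edge $2k-2$ is added (none of the edges $2k-2, 2k-1, 2k$ incident to $v_k$ precedes $2k-2$ in $\leq_{2k+1}$), giving $\pi^:_P(2k) = 2k-2$. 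Collecting these relations (with edge labels read modulo $2r$) produces $\pi^:_P = (1,3,\ldots,2r-1)(2r,2r-2,\ldots,2)$.

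The only delicate point I anticipate is the connectivity bookkeeping in the rim-edge case — confirming that $v_k$ is incident to no element of $[2k+1,\ell]$ before the rim edge $2k-2$ appears, and that the growing component picks up the rim vertices in exactly the cyclic order $v_{k+2}, v_{k+3}, \ldots, v_k$. Everything else is immediate from the definition of $\pi^:$ and the cited facts on circuit-hyperplane relaxations (Propositions~\ref{prop:circ-hyp-relax-circuits} and~\ref{prop:circ-hyp-relax-cocircuits}).
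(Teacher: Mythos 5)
Your proof is correct and follows the same route the paper intends: the paper states this corollary without proof as an immediate consequence of Lemma~\ref{lem:whirl-ordering}, which pins down the edge-ordering to the one in Figure~\ref{fig:wheel-graph} or its shift by $1$, and you simply carry out the (correct) computation of $\pi^:_P$ from those two orderings, using duality to halve the work. The connectivity bookkeeping you flag does check out, and your separate treatment of $r=2$ via $U^2_4$ is consistent with both displayed permutations coinciding in that case.
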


We use the following characterization of ternary gammoids when characterizing the ternary positroids in Theorem~\ref{thm:pos-tern-minor}. Note that a matroid is called \emph{near-regular} if it is realizable over all fields except possibly $\mathbb{F}_2$.

\begin{thm}[\cite{oxley1987characterization}, \cite{oxley2006matroid}] \label{thm:tfae:tern-gammoids}
    Let $M$ be a matroid. The following are equivalent:
    \begin{itemize}
        \item[(i)] $M$ is a ternary gammoid,
        \item[(ii)] $M$ is a near-regular gammoid \cite[Theorems 14.7.5,14.7.7,14.7.9]{oxley2006matroid},
        \item[(iii)] $M$ has no minor isomorphic to any of the matroids $U^2_5$, $U^3_5$, $M(K_4)$, $P_7$, or $P^*_7$ \cite[Theorem 4.1]{oxley1987characterization}.
    \end{itemize}
\end{thm}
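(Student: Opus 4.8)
The plan is to prove the implications (ii) $\Rightarrow$ (i) $\Rightarrow$ (iii) $\Rightarrow$ (ii), using Corollary~\ref{cor:tern-3connect} for the structural step and the cited results of Oxley to close the cycle.

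The implication (ii) $\Rightarrow$ (i) is immediate: a near-regular matroid is by definition realizable over $\mathbb{F}_3$, so a near-regular gammoid is a ternary gammoid. For (i) $\Rightarrow$ (iii), I would note that the classes of ternary matroids and of gammoids are both minor-closed, that $U^2_5$ and $U^3_5$ are not ternary (five distinct points cannot be placed on a line of $PG(1,3)$), and that $M(K_4)$, $P_7$, and $P^*_7$ are not gammoids. Consequently a ternary gammoid has no minor isomorphic to any of the five matroids. The non-gammoid status of $M(K_4)$, $P_7$, and $P^*_7$ can be cited directly, or deduced a posteriori from \cite[Theorem 4.1]{oxley1987characterization}: each of those three matroids is ternary, so the only way it can appear on that theorem's excluded-minor list is by failing to be a gammoid.

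For (i) $\Rightarrow$ (ii), I would decompose a ternary gammoid $M$ into its $3$-connected parts. Write $M$ as the direct sum of its connected components, and write each connected component as an iterated $2$-sum of matroids that are $3$-connected, circuits, or cocircuits; every such building block is a minor of $M$ (a standard property of the canonical tree decomposition), hence again a ternary gammoid. A circuit or a cocircuit is regular, hence near-regular; and a $3$-connected ternary gammoid with at least four elements is isomorphic to a whirl $\mathcal{W}^r$ by Corollary~\ref{cor:tern-3connect}, which is near-regular by \cite[Section 14.7]{oxley2006matroid}. Since near-regularity is preserved under direct sums and $2$-sums, $M$ itself is near-regular; being a gammoid by hypothesis, $M$ is a near-regular gammoid.

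It remains to prove (iii) $\Rightarrow$ (ii), and here I would invoke the external results directly: \cite[Theorem 4.1]{oxley1987characterization} shows that a matroid with none of the five excluded minors is a ternary gammoid, and \cite[Theorems 14.7.5, 14.7.7, 14.7.9]{oxley2006matroid} identify the ternary gammoids with the near-regular gammoids. I expect this to be the main obstacle: unlike the other steps it does not follow from the decomposition tools developed in this paper, and the genuinely nontrivial content is that excluding precisely these five matroids as minors is enough — ruling out the possibility that some further obstruction is needed is exactly what Oxley's characterization provides.
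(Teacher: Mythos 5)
Your proposal is correct, and it matches the paper's treatment: the paper states this theorem purely as a quotation of Oxley's results (Theorem 4.1 of \cite{oxley1987characterization} for the excluded-minor characterization and Theorems 14.7.5, 14.7.7, 14.7.9 of \cite{oxley2006matroid} for the near-regular identification) and offers no proof of its own, and your argument ultimately rests on exactly those same citations for the hard direction (iii) $\Rightarrow$ (i). The one thing worth noting is that your decomposition argument for (i) $\Rightarrow$ (ii) is not really an independent route, since Corollary~\ref{cor:tern-3connect} is itself Corollary 4.3 of \cite{oxley1987characterization}, derived there from the very excluded-minor theorem you defer to; so the whole cycle, like the paper's, bottoms out in Oxley's characterization.
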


We summarize the equivalent characterizations of ternary positroids in the following theorem.

\begin{thm} \label{thm:pos-tern-minor}
    The following are equivalent for a non-empty positroid $P$.
    \begin{itemize}
        \item[(i)] $P$ is ternary.
        \item[(ii)] $P$ is near-regular.
        \item[(iii)] $P$ is $U^2_5$ and $U^3_5$-free.
        \item[(iv)] $P$ can be obtained by direct sums and $2$-sums from circuits, cocircuits, and positroids isomorphic to $\mathcal{W}^r$ for some $r \geq 2$. 
    \end{itemize}
\end{thm}

\begin{proof}
    We break the proof up by items.

    \begin{itemize}
        \item[$(i) \Leftrightarrow (ii) \Leftrightarrow (iii)$] As shown in~\cite{chidiac2021positroids}, all positroids are gammoids, and as stated in~\cite{oxley2006matroid}, all gammoids are $M(K_4), P_7,$ and $P^*_7$-free. Thus the equivalence of (i)-(iii) immediately follows from Theorem~\ref{thm:tfae:tern-gammoids}.
        
        \item[$(i) \Leftrightarrow (iv)$] Suppose that $P$ is ternary. Then, by Theorem~\ref{thm:pos-direct-sum}, $P$ has a direct sum decomposition into $2$-connected ternary positroids on pairwise disjoint, non-crossing subsets of $E(P)$, $P = \bigoplus^n_{i=1} P_i$. Let $P_i$ be fixed. Then, by Theorem~\ref{thm:pos-canon-tree-iff}, $P_i$ has a canonical tree decomposition into ternary positroids $\{P_{i,j}\}^m_{j=1}$ that are circuits, cocircuits, or $3$-connected. All positroids are grammoids, so for all $j \in [m]$, $P_{i,j}$ is a ternary gammoid. Let $P_{i,j}$ be fixed and suppose that $P_{i,j}$ is $3$-connected and neither a circuit nor a cocircuit. Then, as all $3$-connected matroids on fewer than four elements are circuits or cocircuits, $E(P_{i,j}) \geq 4$, so by Corollary~\ref{cor:tern-3connect}, $P_{i,j} \cong \mathcal{W}^r$ for some $r \geq 2$.

        Suppose instead that $P$ can be obtained by direct sums and $2$-sums from circuits, cocircuits, and positroids that are isomorphic to $\mathcal{W}^r$ for some $r \geq 2$. All circuits and cocircuits are graphic, hence regular and therefore ternary. Furthermore, for all $r \geq 2$, $\mathcal{W}^r$ is ternary. For any field $\mathbb{F}$, direct sums and $2$-sums preserve $\mathbb{F}$-realizability, so $P$ is ternary.
    \end{itemize}
\end{proof}

We now turn our attention to the quaternary positroids. Quaternary matroids have the following forbidden minor characterization.

\begin{thm}[Theorem 1.3 in \cite{geelen2000excluded(2)}] \label{thm:f_4-excluded-minors}
    A matroid $M$ is $\mathbb{F}_4$-realizable if and only if $M$ has no minor isomorphic to any of $U^2_6,U^4_6,P_6,F^-_7, (F^-_7)^*,P_8$, or $P^=_8$.
\end{thm}

We apply the following Corollary~\ref{cor:pos-circ-hyp-relax} and Lemma~\ref{lem:pos-uni} to characterize the quaternary positroids by their minimal set of forbidden minors in Proposition~\ref{prop:pos-quat}.

\begin{cor}[Corollary 3.7 in \cite{bonin2023characterization}] \label{cor:pos-circ-hyp-relax}
    Any circuit-hyperplane relaxation of a connected positroid of rank at least two is a positroid.
\end{cor}

\begin{lemma}[Lemma 11 in \cite{benedetti2022quotients}] \label{lem:pos-uni}
    Let $k,n$ be integers such that $0 \leq k \leq n$. The matroid $U^k_n$ has a positroid ordering.
\end{lemma}

\begin{figure}
    \centering
    \begin{tikzpicture}
        \node at (0,-0.4){$a$};
        \node at (2,-0.4){$b$};
        \node at (4,-0.4){$c$};
        \node at (3.4,1.5){$d$};
        \node at (2,3.4){$e$};
        \node at (0.6,1.5){$f$};
        \node at (2,1.1){$g$};
        
        \draw[fill=black] (0,0) circle (3pt);
        \draw[fill=black] (1,1.5) circle (3pt);
        \draw[fill=black] (2,3) circle (3pt);
        \draw[fill=black] (3,1.5) circle (3pt);
        \draw[fill=black] (4,0) circle (3pt);
        \draw[fill=black] (2,0) circle (3pt);
        \draw[fill=black] (2,1.5) circle (3pt);

        \draw[thick] (0,0) -- (2,3);
        \draw[thick] (0,0) -- (4,0);
        \draw[thick] (2,3) -- (4,0);
        \draw[thick] (1,1.5) -- (3,1.5);
    \end{tikzpicture}
    \caption{A geometric representation of the matroid $P^-_7$.}
    \label{fig:p-7}
\end{figure}

\begin{prop} \label{prop:pos-quat}
    A positroid $P$ is $\mathbb{F}_4$-realizable if and only if $P$ is $U^2_6, U^4_6,$ and $P_6$-free.
\end{prop}

\begin{proof}
    By Theorem~\ref{thm:f_4-excluded-minors}, $P$ is $\mathbb{F}_4$-realizable if and only if $P$ contains no minor isomorphic to any of $U^2_6, U^4_6, P_6, F^-_7, (F^-_7)^*, P_8,$ or $P^=_8$. None of the matroids $F^-_7, (F^-_7)^*$, or $P_8$ are gammoids, hence they do not have positroid orderings. 
    
    Now consider the geometric representation given in Figure~\ref{fig:p-7} of the matroid $P^-_7$, which is a minor of $P^=_8$. The set of circuit-hyperplanes of $P^-_7$ is the set of lines in Figure~\ref{fig:p-7}, $\left\lbrace \{a,f,e\}, \{a,b,c\}, \{c,d,e\}, \{f,g,d\} \right\rbrace$. By way of contradiction, suppose that $P^-_7$ has a positroid ordering. Then, by Theorem~\ref{thm:pos-crossing}, each of the circuit-hyperplanes of $P^-_7$ is a cyclic interval. By Proposition~\ref{prop:pos-dihedral}, up to dihedral action, the positroid ordering on $P^-_7$ is one of the following: $\{a < f < e < \cdots \}, \{a < e < f < \cdots\}, \{a < \cdots < f < e\},$ or $\{a < \cdots < e < f\}$. Suppose that $\{a < f < e < \cdots\}$ or $\{a < \cdots < e < f\}$, then $\{f,g,d\}$ is not a cyclic interval, which is a contradiction. Suppose instead that $\{a < e < f < \cdots\}$ or $\{a < \cdots < f < e\}$, then $\{e,d,c\}$ is not a cyclic interval, which is a contradiction. Thus, $P^-_7$ does not have a positroid ordering. Therefore, by Proposition~\ref{prop:pos-dual-minor}, $P^=_8$ does not have a positroid ordering and $P$ is $P^=_8$-free. It follows that $P$ is $\mathbb{F}_4$-realizable if and only if $P$ is $U^2_6,U^4_6,$ and $P_6$-free. 

    As $P_6$ can be obtained from the rank-$3$ whirl $\mathcal{W}^3$ by circuit-hyperplane relaxations, by Corollary~\ref{cor:pos-circ-hyp-relax}, $P_6$ has a positroid ordering. By Lemma~\ref{lem:pos-uni}, $U^2_6$ and $U^4_6$ both have positroid orderings. Therefore, $\{P_6,U^2_6,U^4_6\}$ is the minimum set of forbidden unordered matroid minors that characterizes the $\mathbb{F}_4$-realizable positroids.
\end{proof}

\subsection{Ternary envelope classes}

In this subsection, we characterize the ternary positroid envelope classes. We begin with the positroid envelope class $\Omega_P$, where $P$ is isomorphic to $U^2_4$, before proceeding to the positroid envelope classes of whirls of rank greater than two.  

\begin{lemma}
    Let $P$ be a positroid isomorphic to $U^2_4$. Then, $\Omega_P$ consists of exactly four matroids: one isomorphic to $M(\mathcal{N}_2$), two isomorphic to $\mathcal{N}^2 \cong M(\mathcal{W}_2)$, and one isomorphic to $\mathcal{W}^2 \cong U^2_4$.
\end{lemma}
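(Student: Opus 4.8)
The plan is to compute the Grassmann necklace of $P$ explicitly and then enumerate by hand all matroids realizing it. Since positroids are preserved by order-preserving isomorphisms, we may assume $P$ is the positroid on $[4] = \{1 < 2 < 3 < 4\}$ with $\mathcal{B}(P) = \binom{[4]}{2}$ (the unique positroid isomorphic to $U^2_4$ on a totally ordered $4$-set, as noted just before the statement). Reading off the lexicographically minimal basis with respect to each cyclic order $\leq_{i}$ gives $\mathcal{J}(P) = (\{1,2\},\{2,3\},\{3,4\},\{1,4\})$.

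Next I would translate membership in $\Omega_P$ into a concrete condition on bases. A matroid $M$ lies in $\Omega_P$ precisely when $\mathcal{J}(M) = \mathcal{J}(P)$; in particular every entry of $\mathcal{J}(P)$ is a basis of $M$, so $\{1,2\},\{2,3\},\{3,4\},\{1,4\} \in \mathcal{B}(M)$, and $M$ has rank $2$ (each necklace entry has size $2$) on ground set $[4]$. Conversely, since each of these four pairs is the lexicographically smallest $2$-subset of $[4]$ in the respective order $\leq_i$, any rank-$2$ matroid on $[4]$ having all four of them as bases has Grassmann necklace $\mathcal{J}(P)$. Hence $\Omega_P$ is exactly the set of rank-$2$ matroids $M$ on $[4]$ with $\{\{1,2\},\{2,3\},\{3,4\},\{1,4\}\} \subseteq \mathcal{B}(M) \subseteq \binom{[4]}{2}$.

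Then I would enumerate the four candidates according to which of the two remaining pairs $\{1,3\}$, $\{2,4\}$ belong to $\mathcal{B}(M)$: (a) neither, (b) only $\{1,3\}$, (c) only $\{2,4\}$, (d) both. In each case I check the basis exchange property, which for a rank-$2$ matroid amounts to verifying that the non-basis pairs form parallel classes, and then identify the isomorphism type. Case (d) is all of $\binom{[4]}{2}$, i.e.\ $U^2_4 \cong \mathcal{W}^2$. Case (a) has the two parallel classes $\{1,3\}$ and $\{2,4\}$, which is $M(\mathcal{N}_2)$ (equivalently $U^1_2 \oplus U^1_2$); note this is consistent with its not being a positroid, since $\{1,3\}$ and $\{2,4\}$ cross. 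Cases (b) and (c) each have exactly one parallel class, hence each is isomorphic to $M(\mathcal{W}_2)$; and since relaxing either of the two circuit-hyperplanes of $M(\mathcal{N}_2)$ produces precisely these two matroids, both equal $\mathcal{N}^2$, in agreement with $\mathcal{N}^2 \cong M(\mathcal{W}_2)$ recorded earlier. This exhausts $\Omega_P$ and yields exactly the four claimed matroids.

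I do not anticipate a genuine obstacle. The only point needing care is the equivalence in the second paragraph — that $\mathcal{J}(M) = \mathcal{J}(P)$ is captured exactly by the four ``corner'' pairs being bases — which rests on the lexicographic-minimality definition of the Grassmann necklace map and on reading the rank off from the size of necklace entries; everything else is a finite verification.
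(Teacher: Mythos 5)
Your proposal is correct and follows essentially the same route as the paper: reduce to $P=U^2_4$ on $[4]$, observe that membership in $\Omega_P$ forces the four ``corner'' pairs to be bases and leaves only the choice of which of $\{1,3\},\{2,4\}$ to include, and identify the four resulting matroids as $M(\mathcal{N}_2)$, two copies of $\mathcal{N}^2\cong M(\mathcal{W}_2)$ (the two circuit-hyperplane relaxations), and $U^2_4$. Your version is in fact slightly more complete, since you justify via the explicit Grassmann necklace computation why these four matroids exhaust $\Omega_P$, a step the paper's proof asserts without comment.
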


\begin{proof}
    Without loss of generality, we may take $P = U^2_4$. Observe that $\mathcal{B}(M(\mathcal{N}_2)) = \left\{\{1,2\},\{1,4\},\{2,3\},\{3,4\}\right\}$ and $\mathcal{C}(M(\mathcal{N}_2)) = \left\{\{1,3\},\{2,4\}\right\}$. Let $M_1$, $M_2$ be the matroids on $[4]$ whose bases are
    \begin{align*}
        \mathcal{B}(M_1) &= \left\{\{1,2\},\{1,3\},\{1,4\},\{2,3\},\{3,4\}\right\}\\
        \mathcal{B}(M_2) &= \left\{\{1,2\},\{1,4\},\{2,3\},\{2,4\},\{3,4\}\right\}.
    \end{align*}
    Then, $\Omega_P = \{P,M(\mathcal{N}_2),M_1,M_2\}$. Furthermore, $M_1$ is obtained from $M(\mathcal{N}_2)$ by the relaxation of the circuit-hyperplane $\{1,3\}$ and $M_2$ is obtained from $M(\mathcal{N}_2)$ by the relaxation of the circuit-hyperplane $\{2,4\}$. It follows that $M_1,M_2 \cong \mathcal{N}^2 \cong M(\mathcal{W}_2)$.
\end{proof}

\begin{lemma} \label{lem:whirl-class-contains-graphic-bases}
     Let $P$ be a positroid isomorphic to $\mathcal{W}^r$, and let $M \in \Omega_P$. Let $R $ be the rim of the edge-ordered wheel graph $\mathcal{W}_r$, such that $P$ can be obtained by the circuit-hyperplane relaxation of $R$ in $M(\mathcal{W}_r)$. Then
   \[\{ B \in \mathcal{B}(P) : \lvert R \setminus B \rvert = 1 \} \subset \mathcal{B}(M).\]
\end{lemma}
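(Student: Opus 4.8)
\emph{Proof plan.} The strategy is to describe the bases in question explicitly and then show, one at a time, that each lies in $\mathcal{B}(M)$, using an induction that plays the Grassmann necklace constraints against submodularity of the rank function. First, by Lemma~\ref{lem:whirl-ordering} and invariance of the statement under order‑preserving isomorphism, I would take $\mathcal{W}_r$ with the edge‑ordering of Figure~\ref{fig:wheel-graph}: write $s_i$ for the spoke at the $i$‑th rim vertex and $e_i$ for the $i$‑th rim edge, so that $R=\{e_1,\dots,e_r\}$ and the cyclic sequence of edges is $s_1,e_1,s_2,e_2,\dots,s_r,e_r$. Since $R$ is an $r$‑cycle, $R\setminus\{e_i\}$ is a spanning path of the rim vertices, so for all $i,j\in\mathbb{Z}/r$ the set $B_{i,j}:=(R\setminus\{e_i\})\cup\{s_j\}$ is a spanning tree of $\mathcal{W}_r$, hence a basis of $P$; conversely every basis $B$ of $P$ with $\lvert R\setminus B\rvert=1$ is of this form. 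Thus the lemma asserts $B_{i,j}\in\mathcal{B}(M)$ for all $i,j$.

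Two inputs feed the induction. Since $M\in\Omega_P$, the map $\mathbb{1}\colon P\to M$ is a rank‑preserving weak map — so $\mathcal{I}(M)\subseteq\mathcal{I}(P)$ and $M$ has rank $r$ — and $\mathcal{J}(M)=\mathcal{J}(P)$, so every entry of the Grassmann necklace of $P$ is a basis of $M$. Computing $\mathcal{J}(P)$ by the greedy (lexicographically‑minimal‑basis) algorithm in each cyclic order, or translating Corollary~\ref{cor:whirl-perm}, I would record that these $2r$ entries are exactly the near‑rim bases $B_{i,i+1}$ and $B_{i,i+2}$, $i\in\mathbb{Z}/r$. Hence for each $i$ both $B_{i,i+1},B_{i,i+2}\in\mathcal{B}(M)$, and (taking $i=j-2$) also $B_{j-2,j}\in\mathcal{B}(M)$, a basis which contains the pair $\{e_{j-1},s_j\}$. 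Second, for $r\ge3$ the hub triangle $T_j:=\{s_{j-1},e_{j-1},s_j\}$ is a triangle of $\mathcal{W}_r$ through the hub, distinct from $R$, hence a circuit of $P$; being dependent in $P$ it is dependent in $M$, so $r_M(T_j)\le 2$.

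Now fix $i$ and prove $B_{i,j}\in\mathcal{B}(M)$ running through $j$ in the cyclic order $i{+}1,\,i{+}2,\,i{+}3,\,\dots,\,i{-}1,\,i$. The cases $j=i{+}1,\,i{+}2$ are the necklace bases above. For any other $j$ we have $j-1\ne i$, hence $e_{j-1}\in B_{i,j}$, and one checks
\[
T_j\cap B_{i,j}=\{e_{j-1},s_j\},\qquad T_j\cup B_{i,j}=B_{i,j}\cup\{s_{j-1}\}\supseteq B_{i,j-1}.
\]
By the inductive hypothesis $B_{i,j-1}\in\mathcal{B}(M)$, so $r_M(T_j\cup B_{i,j})=r$; and $\{e_{j-1},s_j\}\subseteq B_{j-2,j}\in\mathcal{B}(M)$ gives $r_M(T_j\cap B_{i,j})=2$. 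Submodularity of $r_M$ then yields
\[
r_M(B_{i,j})\ \ge\ r_M(T_j\cup B_{i,j})+r_M(T_j\cap B_{i,j})-r_M(T_j)\ \ge\ r+2-2\ =\ r,
\]
so $B_{i,j}$ is an $r$‑element independent set of $M$, i.e.\ a basis. (For $r=2$ the near‑rim bases already coincide with the Grassmann necklace, so the claim is immediate, and is in any case contained in the preceding lemma.)

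The main obstacle is the necklace computation: one must pin down $\mathcal{J}(P)$ precisely enough to know that, for each rim edge $e_i$, the two necklace entries omitting $e_i$ use \emph{consecutive} spokes $s_{i+1},s_{i+2}$ (this is exactly what lets the cyclic induction close up without a gap at $j=i{+}1$), and that each spoke $s_j$ occurs in a necklace entry \emph{not} omitting the adjacent rim edge $e_{j-1}$ (this is what certifies $r_M(\{e_{j-1},s_j\})=2$, since otherwise that pair could be dependent in $M$). Everything else — the explicit description of the near‑rim bases, the identification of the hub triangles as circuits, and the single submodularity estimate — is routine bookkeeping with the wheel's cyclic edge ordering.
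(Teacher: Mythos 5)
Your proof is correct, and it follows the same overall strategy as the paper's: seed the argument with the Grassmann necklace entries of $P$ (which are forced to be bases of $M$ because $\mathcal{J}(M)=\mathcal{J}(P)$), observe that the hub triangles of $\mathcal{W}_r$ are circuits of $P$ and hence dependent in $M$ (since $\mathbb{1}\colon P\to M$ is a weak map), and then induct cyclically around the wheel. The difference is in the inductive engine: the paper fixes the spoke and varies the omitted rim edge, applying the basis exchange axiom to two already-established bases and ruling out one of the two exchange outcomes because it contains a triangle; you fix the omitted rim edge and vary the spoke, and instead of basis exchange you use submodularity of $r_M$ applied to the triangle $T_j$ and the candidate set $B_{i,j}$, with the necklace basis $B_{j-2,j}$ certifying $r_M(T_j\cap B_{i,j})=2$ and the inductively known basis $B_{i,j-1}$ certifying $r_M(T_j\cup B_{i,j})=r$. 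Both mechanisms do the same work (an elementary matroid axiom plus one known dependency forces the desired set to be a basis); your submodularity version has the mild advantage of producing the rank of $B_{i,j}$ directly rather than by elimination, while the paper's version avoids having to certify the rank of the two-element intersection. You are also right to flag the necklace computation as the one point that must be pinned down exactly; the entries you record ($B_{i,i+1}$ and $B_{i,i+2}$) agree with the paper's $J_k$.
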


\begin{proof}
Without loss of generality, assume that $P$ is a positroid on $[2r]$ and $\pi^:_P = (1,3,\ldots,{2r-1})({2r},{2r-2},\ldots,2)$. Then $P$ is the whirl corresponding to the wheel graph in Figure~\ref{fig:wheel-graph}. For $\mathcal{J}(P) = (J_1,J_2,\ldots,J_{2r})$ and for all $k \in [2r]$, with elements taken modulo ${2r}$, we have
    \begin{equation*}
        J_k =
        \begin{cases}
            \{k\} \cup \{ {k+1}, {k+3},\dots, {k-3}\}, & \text{if } k \text{ is odd}\\
            \{{k+1}\}  \cup \{k, {k+2},\dots,{k-4}\}, & \text{if } k \text{ is even.}
        \end{cases}
    \end{equation*}
Let $M \in \Omega_P$, then $\mathcal{B}(M) \supseteq \{J_{1},J_{2},\ldots,J_{{2r}}\}$. Now, let $1\leq s,t\leq 2r$ with $s$ odd and $t$ even. We will show that the set $X_{s,t}=\{ s \}\cup \{ {t+2}, {t+4},\dots,{t-2} \}$ is also a basis of $M$. If $t\in \{s-1, s-3\}$, then this set is already included in $\mathcal{J}(P)$. We proceed by induction. Suppose that $X_{s,t}$ is a basis of $M$ for any $t \in \{s-1,s-3,\dots, s-a\}$, with $3 \leq a \leq 2r-3$ odd. Consider the two bases $X_{s,s-a}$ and $X_{s-2,s-a-2}$. Then $X_{s-2,s-a-2} \setminus X_{s,s-a} = \{ s-2,s-a \}$ and $X_{s,s-a} \setminus X_{s-2,s-a-2} = \{ s,s-a-2 \}$. Therefore, by the basis exchange property, either $(X_{s,s-a} \setminus \{ s-a-2 \}) \cup \{s-2\}$ or $(X_{s,s-a}\setminus \{s-a-2\} )\cup \{s-a\} = X_{s,s-a-2}$ is a basis of $M$. Since the circuit $\{s,s-1,s-2  \} $ is a subset of $(X_{s,s-a}\setminus \{s-a-2\} )\cup \{s-2\}$, we must have that $X_{s,s-a-2}$ is a basis of $M$ for any odd $1\leq s \leq 2r-1$, which completes the proof by induction.
\end{proof}

There is a matroid $N$ in $ \Omega_P$ with bases
\begin{align*}
    B(N) &= \{ B \in \mathcal{B}(P) : \lvert R \setminus B \rvert {=} 1 \}\\
    &= \{  X_{s,t} : 1\leq s,t \leq 2r, \; s \mbox{ odd}, t \mbox{ even}\}
\end{align*}
as in Lemma~\ref{lem:whirl-class-contains-graphic-bases}. This matroid $N$ is isomorphic to the graphic matroid $M(\mathcal{N}_r)$ from Figure~\ref{fig:wheel-graph}. There is also a matroid isomorphic to $\mathcal{N}^r$ with $\mathcal{B}(\mathcal{N}^r)=\mathcal{B}(M(\mathcal{N}_r)) \cup \{ R \}$, obtained by the circuit-hyperplane relaxation of $R$ in $M(\mathcal{N}_r)$. We will now show that the only other matroids, besides the one isomorphic to $M(\mathcal{N}_r)$ and the one isomorphic to $\mathcal{N}^r$, in the envelope class of a positroid $P \cong \mathcal{W}^r$ for some $r \geq 2$ are $P$ and a matroid isomorphic to $M(\mathcal{W}_r)$.

\begin{lemma} \label{lem:whirl-class-k-spokes}
    Let $P$ be a positroid isomorphic to $\mathcal{W}^r$ for some $r \geq 2$, and let $M \in \Omega_P$. Suppose that there exists $B \in \mathcal{B}(M)$ such that $\lvert R \setminus B\rvert >1$, where $R $ is the rim of the edge-ordered wheel graph $\mathcal{W}_r$ such that $P$ can be obtained by the circuit-hyperplane relaxation of $R$ in $M(\mathcal{W}_r)$. Then
    \begin{equation*}
        \mathcal{B}(M(\mathcal{W}_r)) \subseteq \mathcal{B}(M).
    \end{equation*}
    Therefore, $M$ is isomorphic to $M(\mathcal{W}_r)$ or $\mathcal{W}^r$.
\end{lemma}

\begin{proof}
Without loss of generality, assume $P$ is a positroid on $[2r]$ and $\pi^:_P = (1,3,\ldots,2r-1)(2r,2r-2,\ldots,2)$. We will show that for any $1\leq k \leq r$, we have
\begin{equation}\label{eq:basesk}
        \{ B \in \mathcal{B}(P) : \lvert R \setminus B \rvert = k \} \subset \mathcal{B}(M).
    \end{equation}
We let $X_{s_1,\dots,s_k,t_1,\dots,t_k}=\{s_1,\dots,s_k\} \cup (\{2,4,\dots,2r\}\setminus \{t_1,\dots,t_k\})$, for $1\leq s_1,\dots,s_k,t_1,\dots,t_k \leq 2r $, and $s_1,\dots,s_k$ odd and $t_1,\dots,t_k $ even. First we note that for such a set to be a basis of $P$, there cannot be a pair $s_i,s_j$ such that $s_i<_{s_i} s_j <_{s_i} t_1,\dots,t_k$, or else the set contains a cycle in $\mathcal{W}_r$ and therefore a circuit in $\mathcal{W}^r$. We claim that any such set $X_{s_1,\dots,s_k,t_1,\dots,t_k}$ where the odd and even numbers interlace is indeed a basis. 

We proceed by induction. Our base case, $k=1$, follows from Lemma~\ref{lem:whirl-class-contains-graphic-bases}. Let $k>1$ and suppose the claim~(\ref{eq:basesk}) holds for $1,\dots,k-1$. We start by showing that $M$ has some basis $B$ such that $|R \setminus B|=k$. If $k=2$, we note that, by assumption, $M$ has a basis $B$ with $|R \setminus B|>1$. If $|R \setminus B|>2$, we have some $\{s,s'\} \subseteq B \setminus R$. Consider the basis $X_{s,t}$ for any $t$, which exists by Lemma~\ref{lem:whirl-class-contains-graphic-bases}. By the basis exchange property, we can add an element from $X_{s,t}$ to $B \setminus \{s'\}$ to obtain a new basis $B'$ such that $|R \setminus B'| < |R \setminus B|$. Repeating this step eventually yields a basis $\tilde{B}$ such that $|R \setminus \tilde{B}|=2$. Now, suppose that $k>2$. Let $X=X_{s_1,\dots,s_{k-1},t_1,\dots,t_{k-1}}$ be a basis of $M$. As long as $k<r$, there must be an $x\in \{s_1,\dots,s_{k-1},t_1,\dots,t_{k-1}\}$ such that $x+1,x+2 \notin \{s_1,\dots,s_{k-1},t_1,\dots,t_{k-1}\}$. By symmetry and duality, without loss of generality we may assume that $x=s_1$. Then, $X'=X_{s_1+2,s_2,\dots,s_{k-1},s_1+1,t_1,\dots,t_{k-2}}$ is a basis of $M$ by the inductive hypothesis. Then, either $(X\setminus \{ s_1+1\})\cup \{t_{k-1}\}$ or $(X \setminus \{ s_1+1\})\cup \{  s_1+2\}=X_{s_1,s_1+2,s_2,\dots,s_{k-1},s_1+1,t_1,\dots,t_{k-1}}$ is a basis of $M$. Since the former contains the circuit $\{s_{k-1}, s_{k-1} + 1, s_{k-1} + 3,\ldots,s_1 - 1,s_1\}$, we must have the latter. Therefore, for $1\leq k \leq r$, there exists a basis $B$ in $M$ with $|R \setminus B|=k$.

Now, we will show that, for $1\leq k \leq r$, if $M$ has a basis $B$ with $|R \setminus B|=k$, then it has every possible such basis. Suppose that $X=X_{s_1,\dots,s_k,t_1,\dots,t_k} \in \mathcal{B}(M)$ and, without loss of generality, that we have 
\[ s_1 <_{s_1} t_1 <_{s_1} \dots <_{s_1} s_k <_{s_1} t_k .\]
We claim that if for any $x\in \{s_1,\dots,s_k,t_1,\dots,t_k\}$, $x+2$ is in the same position as $x$ in this order, then we can replace $x$ with $x+2$ to obtain a new basis. This move is sufficient to then obtain any pair of such interlacing sets. By symmetry and duality, it suffices to show this for $s_1$. Suppose that $t_k <_{s_1} s_1, s_1+2 <_{s_1} t_1$. By the inductive hypothesis, $X'=X_{s_1+2,s_2,\dots,s_{k-1},t_1,\dots,t_{k-1}} \in \mathcal{B}(M)$. Thus, either $(X' \setminus \{t_k\})\cup \{s_1\}$ or $(X' \setminus \{t_k\}) \cup \{s_k\} = X_{s_1+2,s_2,\dots,s_k,t_1,\dots,t_k} $ is in $\mathcal{B}(M)$. Since the circuit $\{s_1, s_1+1,s_1+2\}$ is contained in the former, we must have the latter, which completes the proof.
\end{proof}

\begin{cor}\label{cor:pos-whirl-class}
Let $P$ be a positroid isomorphic to $\mathcal{W}^r$ for some $r \geq 2$. Then matroid isomorphism yields the bijection 
\[\Omega_P \cong \{ M(\mathcal{N}_r),\mathcal{N}^r ,M(\mathcal{W}_r),\mathcal{W}^r   \}  .\]
\end{cor}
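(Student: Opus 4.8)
The goal is to pin down $\Omega_P$ exactly, and the plan is to trap the basis set of every member of $\Omega_P$ between explicit basis sets. Fix a representative so that $P$ arises from an edge-ordered wheel $M(\mathcal{W}_r)$ by relaxing its rim $R$, as in Lemma~\ref{lem:whirl-pos}; then $\mathcal{B}(P) = \mathcal{B}(M(\mathcal{W}_r)) \cup \{R\}$, with $R \notin \mathcal{B}(M(\mathcal{W}_r))$ (it is a cycle of the wheel) and $R$ the unique basis $B$ of $P$ with $R \subseteq B$. Since $P$ is the maximum of $\Omega_P$ in the rank-preserving weak order, every $M \in \Omega_P$ satisfies $\mathcal{B}(M) \subseteq \mathcal{B}(P)$. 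When $r = 2$ the class $\Omega_{U^2_4}$ is already listed explicitly, so one may assume $r \geq 3$, although the argument below recovers $r = 2$ as well.

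First I would dichotomize on whether $M$ has a basis omitting two or more rim edges. Suppose not, i.e.\ $|R \setminus B| \leq 1$ for all $B \in \mathcal{B}(M)$. Then Lemma~\ref{lem:whirl-class-contains-graphic-bases} gives $\{B \in \mathcal{B}(P) : |R \setminus B| = 1\} \subseteq \mathcal{B}(M)$, while the hypothesis together with $\mathcal{B}(M) \subseteq \mathcal{B}(P)$ gives $\mathcal{B}(M) \subseteq \{B \in \mathcal{B}(P) : |R \setminus B| = 1\} \cup \{R\}$. The smaller of these two sets is, by the discussion following Lemma~\ref{lem:whirl-class-contains-graphic-bases}, the basis set of the member of $\Omega_P$ isomorphic to $M(\mathcal{N}_r)$, for which $R$ is a circuit-hyperplane whose relaxation is $\mathcal{N}^r$; so $\mathcal{B}(M)$ is one of the two sets and $M \cong M(\mathcal{N}_r)$ or $M \cong \mathcal{N}^r$. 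In the remaining case $M$ has a basis with $|R \setminus B| \geq 2$, and Lemma~\ref{lem:whirl-class-k-spokes} directly gives $M \cong M(\mathcal{W}_r)$ or $M \cong \mathcal{W}^r$. Hence every member of $\Omega_P$ is isomorphic to one of the four matroids named in the statement.

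For the reverse direction, $\mathcal{W}^r = P$ lies in $\Omega_P$ by definition, and the other three I would place in $\Omega_P$ via the elementary criterion that if a matroid $Q$ on $E(P)$ has $\mathcal{B}(Q) \subseteq \mathcal{B}(P)$ and every term of the Grassmann necklace $\mathcal{J}(P) = (J_1,\dots,J_{2r})$ is a basis of $Q$, then $\mathcal{J}(Q) = \mathcal{J}(P)$: the $\leq_k$-lex-minimal basis of $Q$ is then $\leq J_k$ (because $J_k \in \mathcal{B}(Q)$) and $\geq J_k$ (because $J_k$ is the $\leq_k$-lex-minimal element of the larger set $\mathcal{B}(P)$), so it equals $J_k$. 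The necklace computed in the proof of Lemma~\ref{lem:whirl-class-contains-graphic-bases} has $|R \setminus J_k| = 1$ for every $k$, so each of $\mathcal{B}(M(\mathcal{N}_r)) = \{B \in \mathcal{B}(P) : |R \setminus B| = 1\}$, $\mathcal{B}(\mathcal{N}^r) = \mathcal{B}(M(\mathcal{N}_r)) \cup \{R\}$, and $\mathcal{B}(M(\mathcal{W}_r)) = \mathcal{B}(P) \setminus \{R\}$ contains all the $J_k$ and lies inside $\mathcal{B}(P)$ (that these sets are genuine matroid-basis sets follows from Propositions~\ref{prop:circ-hyp-relax-circuits} and~\ref{prop:circ-hyp-relax-cocircuits} and the graphic identifications in the text), so all three are in $\Omega_P$. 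Finally, for $r \geq 3$ the four matroids have strictly increasing numbers of bases — using that at least two spanning trees of $\mathcal{W}_r$ omit two or more rim edges, so $|\mathcal{B}(M(\mathcal{N}_r))| < |\mathcal{B}(\mathcal{N}^r)| < |\mathcal{B}(M(\mathcal{W}_r))| < |\mathcal{B}(\mathcal{W}^r)|$ — hence are pairwise non-isomorphic; for $r = 2$ the explicit description of $\Omega_{U^2_4}$ (four matroids, with $\mathcal{N}^2 \cong M(\mathcal{W}_2)$) gives the statement directly. In every case $|\Omega_P| = 4$ and matroid isomorphism is the asserted bijection.

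The genuinely hard combinatorics is all inside Lemmas~\ref{lem:whirl-class-contains-graphic-bases} and~\ref{lem:whirl-class-k-spokes}, so I anticipate no serious obstacle in assembling the corollary; the only points needing attention are the bookkeeping that $\mathcal{B}(M)$ gets trapped between two basis sets differing only by $\{R\}$, the necklace check that the three non-$P$ matroids actually sit in $\Omega_P$, and the $r = 2$ degeneracy in which the types $\mathcal{N}^r$ and $M(\mathcal{W}_r)$ coincide.
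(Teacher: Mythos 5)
Your proof is correct and follows essentially the same route as the paper, which leaves the corollary as an immediate consequence of Lemmas~\ref{lem:whirl-class-contains-graphic-bases} and~\ref{lem:whirl-class-k-spokes} together with the intervening discussion identifying $\{B \in \mathcal{B}(P) : \lvert R \setminus B\rvert = 1\}$ with $\mathcal{B}(M(\mathcal{N}_r))$: the dichotomy on whether some basis of $M$ omits two or more rim edges is exactly the intended argument. The details you add beyond the paper --- the Grassmann-necklace check that the three proper weak-map images actually lie in $\Omega_P$, and the basis-count argument for pairwise non-isomorphism --- are correct and fill in steps the paper asserts without proof.
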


In Thereom~\ref{thm:pos-tern-envelope}, we extend the characterizations of the ternary positroids from Theorem~\ref{thm:pos-tern-minor}.

\begin{thm} \label{thm:pos-tern-envelope}
    The following are equivalent for a non-empty positroid $P$.
    \begin{itemize}
        \item[(i)] $P$ is ternary.
        \item[(ii)] $\Omega_P$ has a direct sum and canonical tree decomposition into $\{\Omega_{P_i}\}^n_{i=1}$ such that for all $i \in [n]$, $|\Omega_{P_i}| = 1$ or for some $r \geq 2$ there is a bijection given by matroid isomorphisms from $\Omega_{P_i}$ to the set $\{M(\mathcal{N}_r),\mathcal{N}^r,M(\mathcal{W}_r),\mathcal{W}^r\}$.
        \item[(iii)] For every field $\mathbb{F}$, $\Pi^{\circ}_{\mathbb{F}}(P)$ is the disjoint union of near-regular (ternary) matroid strata.
        \item[(iv)] $\Pi_{\mathbb{R}}(P)$ is the disjoint union of near-regular (ternary) matroid strata.
    \end{itemize}
\end{thm}

\begin{proof}
    We break the proof up by items.

    \begin{itemize}
        \item[$(i) \Leftrightarrow (ii)$] Suppose that $P$ is ternary. Then, by Theorem~\ref{thm:pos-direct-sum}, $P$ has a direct sum decomposition into $2$-connected ternary positroids on pairwise disjoint, non-crossing subsets of $E(P)$, $P = \bigoplus^n_{i=1} P_i$. This induces, by Proposition~\ref{prop:direct-sum-class-decomp}, a direct sum decomposition of the positroid envelope class associated to $P$, $\Omega_P = \bigoplus^n_{i=1} \Omega_{P_i}$. Let $P_i$ be fixed. Then, by Theorem~\ref{thm:pos-canon-tree-iff}, $P_i$ has a canonical tree decomposition into ternary positroids $\{P_{i,j}\}^m_{j=1}$ that are circuits, cocircuits, or $3$-connected. This induces, by Proposition~\ref{prop:2sum-class-decomp}, a canonical tree decomposition of $\Omega_{P_i}$ into positroid envelope classes $\{\Omega_{P_{i,j}}\}^m_{j=1}$. Now let $P_{i,j}$ be fixed. By Theorem~\ref{thm:pos-graphic}, if $P_{i,j}$ is binary, then $|\Omega_{P_{i,j}}| = 1$. If $P_{i,j}$ is non-binary, then $E(P_{i,j}) \geq 4$. By Corollary~\ref{cor:tern-3connect}, $P_{i,j}$ is isomorphic to $\mathcal{W}^r$ for some $r \geq 2$. Then, by Corollary~\ref{cor:pos-whirl-class}, there is a bijection given by matroid isomorphism from $\Omega_{P_{i,j}}$ to the set $\{M(\mathcal{N}_r),\mathcal{N}^r,M(\mathcal{W}_r),\mathcal{W}^r\}$.

        Suppose instead that $\Omega_P$ has a direct sum decomposition into $\{\Omega_{P_i}\}^n_{i=1}$ each with a canonical tree decomposition into $\{\Omega_{P_{i,j}}\}^{m_i}_{j=1}$ such that for all $i \in [n], j \in [m_i]$, $|\Omega_{P_{i,j}}| = 1$ or for some $r \geq 2$ there is a bijection given by matroid isomorphisms from $\Omega_{P_{i,j}}$ to the set $\{M(\mathcal{N}_r),\mathcal{N}^r,M(\mathcal{W}_r),\mathcal{W}^r\}$. Let $\Omega_{P_{i,j}}$ be fixed. If $|\Omega_{P_{i,j}}| = 1$, then by Theorem~\ref{thm:pos-graphic}, $P_{i,j}$ is regular and therefore ternary. If $\Omega_{P_{i,j}}$ is in bijection with $\{M(\mathcal{N}_r),\mathcal{N}^r,$ $M(\mathcal{W}_r),$ $\mathcal{W}^r\}$, then $P_{i,j}$ is isomorphic to $\mathcal{W}^r$ for some $r \geq 2$, hence $P_{i,j}$ is ternary. Then, $P$ is obtained from direct sums and $2$-sums of ternary matroids and is therefore ternary.

        \item[$(i) \Leftrightarrow (iii)$] Suppose that $P$ is ternary and therefore $\Omega_P$ has a direct sum decomposition into positroid envelope classes $\{\Omega_{P_i}\}^n_{i=1}$ each with a $2$-sum decomposition into envelope classes $\{\Omega_{P_{i,j}}\}^{m_i}_{j=1}$ such that for each $\Omega_{P_{i,j}}$, $|\Omega_{P_{i,j}}| = 1$ or for some $r \geq 2$ there is a bijection given by matroid isomorphisms from $\Omega_{P_{i,j}}$ to the set $\{M(\mathcal{N}_r),\mathcal{N}^r,$ $M(\mathcal{W}_r),$ $\mathcal{W}^r\}$. Let $\Omega_{P_{i,j}}$ be fixed. If $|\Omega_{P_{i,j}}| = 1$, then by Theorem~\ref{thm:pos-graphic}, $\Omega_{P_{i,j}}$ consists of a single regular and therefore near-regular matroid. The set $\{M(\mathcal{N}_r),\mathcal{N}^r,$ $M(\mathcal{W}_r),$ $\mathcal{W}^r\}$ consists entirely of near-regular matroids, so if $|\Omega_{P_{i,j}}| \neq 1$, then $\Omega_{P_{i,j}}$ consists entirely of near-regular matroids. Thus, $\Omega_P$ consists entirely of matroids that are direct sums and $2$-sums of near-regular matroids, hence are near-regular and therefore ternary. Let $\mathbb{F}$ be a field, then $\Pi^{\circ}_{\mathbb{F}}(P) = \bigsqcup_{M \in \Omega_P} S_M(\mathbb{F})$. Thus, $\Pi^{\circ}_{\mathbb{F}}(P)$ is the disjoint union of near-regular and therefore ternary matroid strata.

        Suppose instead that for every field $\mathbb{F}$, $\Pi^{\circ}_{\mathbb{F}}(P)$ is the disjoint union of near-regular and therefore ternary matroid strata. Then in particular, for the field $\mathbb{R}$, $S_P(\mathbb{R}) \subseteq \Pi^{\circ}_{\mathbb{R}}(P)$ is a ternary matroid stratum, so $P$ is ternary.

        \item[$(i) \Leftrightarrow (iv)$] Suppose that $P$ is ternary of rank-$r$. Without loss of generality, take $P$ to be a positroid on $[n]$. Let $M$ be an $\mathbb{R}$-linear matroid, with positroid envelope $Q$, such that $\mathbb{1} : P \to M$ is a rank-preserving weak-map. Let $\mathcal{J}(M) = (J_1,J_2,\ldots,J_n) = \mathcal{J}(Q)$ and let $\mathcal{J}(P) = (J'_1,J'_2,\ldots,J'_n)$. Then, for all $i \in [n]$, $J'_i \leq J_i$. By Theorem~\ref{thm:pos-intersect}, we obtain
        \begin{align*}
            \mathcal{B}(Q) &= \bigcap^n_{i=1} \left\{ B \in {[n] \choose r} : J_i \leq_i B \right\}\\
            &\subseteq \bigcap^n_{i=1} \left\{ B \in {[n] \choose r} : J'_i \leq_i B \right\} = \mathcal{B}(P).
        \end{align*}
        Therefore, $\mathbb{1} : P \to Q$ is a rank-preserving weak map. As $P$ is $U^2_5$ and $U^3_5$-free, then by Corollary~\ref{cor:uni-free}, $Q$ is $U^2_5$ and $U^3_5$-free, hence $Q$ is near-regular and $M \in \Omega_Q$ is near-regular. It follows that $\Pi_{\mathbb{R}}(P)$ is the disjoint union of near-regular and therefore ternary matroid strata.

        Suppose instead that $\Pi_{\mathbb{R}}(P)$ is the disjoint union of near-regular and therefore ternary matroid strata. Then, $S_P(\mathbb{R}) \subseteq \Pi_{\mathbb{R}}(P)$ is a ternary matroid stratum, hence $P$ is ternary.
    \end{itemize}
\end{proof}

By Theorem~\ref{thm:pos-tern-envelope} and Proposition~\ref{prop:semilattices-direct-sums-2sums}, every ternary positroid envelope class is a lattice.

\begin{thm} \label{thm:tern-envelope-lattice}
    For a ternary positroid $P$, the positroid envelope class $\Omega_P$ is a lattice under the rank-preserving weak map partial order.
\end{thm}

We provide the following formula for enumerating the matroids in a ternary positroid envelope class using direct sum and $2$-sum decompositions of envelope classes as given by Propositions~\ref{prop:direct-sum-class-decomp} and~\ref{prop:2sum-class-decomp}.

\begin{cor}
    Let $P$ be a ternary positroid and $w$ be equal to the number of non-binary $3$-connected components of $P$. Then, $|\Omega_P| = 4^w$.
\end{cor}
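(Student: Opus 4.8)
The plan is to combine the direct sum and $2$-sum decompositions of envelope classes, established in Propositions~\ref{prop:direct-sum-class-decomp} and~\ref{prop:2sum-class-decomp}, with the structural characterization of ternary positroids in Theorem~\ref{thm:pos-tern}(iv)--(v) and the count of matroids in a whirl envelope class from Corollary~\ref{cor:pos-whirl-class}. First I would set up a multiplicativity lemma: whenever $P = P_1 \oplus P_2$ on disjoint non-crossing ground sets, Proposition~\ref{prop:direct-sum-class-decomp} gives $\Omega_P = \Omega_{P_1} \oplus \Omega_{P_2}$, and since the direct sum $M_1 \oplus M_2$ is determined by the (unordered, but here also ordered) pair $(M_1, M_2)$, distinct pairs give distinct matroids; hence $|\Omega_P| = |\Omega_{P_1}| \cdot |\Omega_{P_2}|$. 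Similarly, when $P = P_1 \oplus_2 P_2$ with $P_1, P_2$ being $2$-connected positroids on non-crossing ground sets of size at least $3$, Proposition~\ref{prop:2sum-class-decomp} gives $\Omega_{P} = \Omega_{P_1} \oplus_2 \Omega_{P_2}$, and one checks that the map $(M_1, M_2) \mapsto M_1 \oplus_2 M_2$ is injective on $\Omega_{P_1} \times \Omega_{P_2}$ (the basepoint $e$ is neither a loop nor a coloop since $P_1, P_2$ are $2$-connected with at least $3$ elements, so $M_1$ and $M_2$ can be recovered as minors of the $2$-sum), yielding $|\Omega_P| = |\Omega_{P_1}| \cdot |\Omega_{P_2}|$ again.

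Next, apply Theorem~\ref{thm:pos-tern}: a ternary positroid $P$ decomposes, via a sequence of direct sums and $2$-sums (following the direct sum decomposition into $2$-connected components, then the canonical tree decomposition of each component), into building blocks that are circuits, cocircuits, or positroids isomorphic to some $\mathcal{W}^r$ with $r \geq 2$. These building blocks are exactly the $3$-connected components of $P$. The circuits and cocircuits are graphic, hence regular, hence binary, so by Theorem~\ref{thm:pos-graphic}(i)$\Leftrightarrow$(v) each such block $P_{i,j}$ satisfies $|\Omega_{P_{i,j}}| = 1$. Each whirl block $P_{i,j} \cong \mathcal{W}^r$ satisfies $|\Omega_{P_{i,j}}| = 4$ by Corollary~\ref{cor:pos-whirl-class}. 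The whirl blocks are precisely the non-binary $3$-connected components, of which there are $w$ by hypothesis.

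Finally, I would iterate the multiplicativity lemma over the tree of decompositions: $|\Omega_P|$ is the product of $|\Omega_{P_{i,j}}|$ over all building blocks $P_{i,j}$, which equals $\prod 1 \cdot \prod 4 = 4^w$ where the first product is over binary $3$-connected components and the second over the $w$ non-binary ones. One small technical point to address is that the multiplicativity lemma as stated for $2$-sums requires both factors to have at least $3$ elements and be $2$-connected; when a block is a circuit or cocircuit on exactly $2$ elements this does not arise as a nontrivial case, and the canonical tree decomposition of Theorem~\ref{thm:canon-tree} already handles the degenerate small cases by setting $k=1$. I would phrase the induction on the number of vertices of the canonical tree decomposition (together with the direct sum decomposition of the whole positroid into $2$-connected components) to make the bookkeeping clean.

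I expect the main obstacle to be the injectivity claim in the multiplicativity lemma for $2$-sums, i.e. verifying that distinct pairs $(M_1, M_2) \in \Omega_{P_1} \times \Omega_{P_2}$ produce non-isomorphic-as-ordered-matroids (in fact distinct) $2$-sums $M_1 \oplus_2 M_2$. This should follow from the fact that in a $2$-sum along a basepoint $e$ that is neither a loop nor a coloop of either side, each factor $M_k$ is recovered (up to the identification of $e$) as a minor of $M_1 \oplus_2 M_2$ on the appropriate ground set $E(P_k)$; combined with Lemma~\ref{lem:2sum-weak-maps} and Lemma~\ref{lem:not-1-sep} guaranteeing that every $M \in \Omega_{P_1 \oplus_2 P_2}$ decomposes as such a $2$-sum with the correct decorated permutations, this pins down the factors uniquely within their envelope classes. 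Everything else is routine bookkeeping on the decomposition tree.
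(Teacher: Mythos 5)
Your proposal is correct and follows essentially the same route as the paper: decompose $P$ (and hence $\Omega_P$, via Propositions~\ref{prop:direct-sum-class-decomp} and~\ref{prop:2sum-class-decomp}) into circuits, cocircuits, and whirls, then multiply the envelope-class sizes $1$ and $4$ from Theorem~\ref{thm:pos-graphic} and Corollary~\ref{cor:pos-whirl-class}. The only difference is that you make explicit the injectivity of $(M_1,M_2)\mapsto M_1\oplus_2 M_2$ needed for multiplicativity of $|\Omega_P|$, a point the paper leaves implicit in ``this corresponds directly to a decomposition of the envelope class''; your justification of it (the factors are recoverable from the $2$-sum via Lemma~\ref{lem:2sum-weak-maps}) is sound.
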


\begin{proof}
By Theorem~\ref{thm:pos-tern-minor}, a ternary positroid can be decomposed via direct sums and 2-sums into circuits, cocircuits, and positroids isomorphic to $\mathcal{W}^r$ for some $r \geq 2$. By Propositions~\ref{prop:direct-sum-class-decomp} and ~\ref{prop:2sum-class-decomp}, this corresponds directly to a direct sum and $2$-sum decomposition of the envelope class. By Theorem~\ref{thm:pos-graphic} and Corollary~\ref{cor:pos-whirl-class}, the envelope classes of the circuits and cocircuits have cardinality $1$, and those of the whirls have cardinality $4$. Therefore, there are $4^w$ matroids in the envelope class, where $w$ is the number of non-binary $3$-connected components of $P$.
\end{proof}

\section*{Acknowledgments}
We thank Allen Knutson, Puck Rombach and Melissa Sherman-Bennett for helpful conversations.

\end{document}